        \title{Almost equivariant maps for td-groups}
       \author{Bartels, A.}
      \address{WWU M\"unster\\
               Mathematisches Institut\\
               Einsteinstr.~62,
               D-48149 M\"unster, Germany}
        \email{a.bartels@wwu.de}
      \urladdr{http://www.math.uni-muenster.de/u/bartelsa} 
       \author{L\"uck, W.}
      \address{Mathematisches Institut der Universit\"at Bonn\\
                Endenicher Allee 60\\
                53115 Bonn, Germany}
        \email{wolfgang.lueck@him.uni-bonn.de}
      \urladdr{http://www.him.uni-bonn.de/lueck}
         \date{April 2024}
     \keywords{CAT(0)-geometry, flow spaces}
    \subjclass{20F67, 51F99, 37D99}
  \DeclareMathAlphabet{\matheurm}{U}{eur}{m}{n}
\DeclareMathAlphabet{\matheurm}{U}{eur}{m}{n}
\DeclareMathOperator{\colim}{colim}
\DeclareMathOperator{\id}{id}
\DeclareMathOperator{\SL}{SL}
\newcommand{\fold}[1]{d_{#1\text{-}\mathrm{fol}}}
\newcommand{\CVCYC}{{\calc\hspace{-1pt}\mathrm{vcy}}}
  \newcommand{\IN}{\mathbb{N}}
  \newcommand{\IQ}{\mathbb{Q}}
  \newcommand{\IR}{\mathbb{R}}
  \newcommand{\IZ}{\mathbb{Z}}
  \newcommand{\calc}{\mathcal{C}}
  \newcommand{\calf}{\mathcal{F}}
    \newcommand{\calu}{\mathcal{U}}
  \newcommand{\calv}{\mathcal{V}}
  \newcommand{\calw}{\mathcal{W}}
\newcommand{\fol}{{\mathrm{fol}}}
\newcommand{\ballinX}[1]{B_{#1}}
\newcounter{commentcounter}
\theoremstyle{plain}
\newtheorem{theorem}{Theorem}[section]
\newtheorem{lemma}[theorem]{Lemma}
\newtheorem{proposition}[theorem]{Proposition}
\newtheorem{assumption}[theorem]{Assumption}
\newtheorem*{theorem*}{Theorem}
\newtheorem*{theoremA*}{Theorem A}
\newtheorem*{theoremB*}{Theorem B}
\theoremstyle{definition}
\newtheorem{definition}[theorem]{Definition}
\newtheorem{addendum}[theorem]{Addendum}
\newtheorem{example}[theorem]{Example}
\newtheorem{remark}[theorem]{Remark}
\newtheorem*{definition*}{Definition}
\theoremstyle{remark}
\let\c@equation=\c@theorem\makeatother
\theoremstyle{definition}
\newcounter{othercommentcounter}
\newcommand{\e}{{\varepsilon}}
\newcommand{\CAT}{\operatorname{CAT}}
\newcommand{\FS}{\mathit{FS}}
\newcommand{\version}[1]              
{\begin{center} last edited on #1\\
last compiled on \today\\
name of tex-file: \jobname
\end{center}
}
\begin{document}

\maketitle

  \begin{abstract}
    We construct certain maps from buildings associated to td-groups to a space closely
    related to the classifying numerable $G$-space for the family $\CVCYC$ of covirtually
    cyclic subgroups.  These maps are used
    elsewhere to study the K-theory of Hecke
    algebras in the spirit of the Farrell--Jones conjecture.
  \end{abstract}


  \typeout{------------------- Introduction -----------------}

  \section{Introduction}


  The Farrell--Jones conjecture~\cite{Farrell-Jones(1993a)} originated in the surgery theory and has applications to the classification of manifolds, notably it implies (in dimension $\geq 5$) Borel's conjecture on the topological rigidity of aspherical manifolds.
  The conjecture concerns the K- and L-groups of group rings and expresses these in terms of an equivariant homology theory.
  It can be viewed as reducing computations to the case of group rings for virtually cyclic groups.  
  Further information on the conjecture can be found for instance in~\cite{Lueck-Reich(2005)}.  
  
 The main result from this paper are used in~\cite{Bartels-Lueck(2023K-theory_red_p-adic_groups)} to obtain computations for the K-theory of Hecke algebras that are in spirit similar to the Farrell--Jones conjecture.
  It can viewed as extending results from~\cite{Bartels-Lueck(2012CAT(0)flow)} which are a central ingredient to the proof of the Farrell--Jones conjecture for $\CAT(0)$-groups~\cite{Bartels-Lueck(2012annals)} from the setting of discrete groups to td-groups.

  \subsection{Discrete case}\label{subsec:discrete}
  Let $\Gamma$ be a discrete groups.  
  Typically a $\Gamma$-space cannot be both compact and $\Gamma$-CW-complex with
  small isotropy groups.  Compromises between these two properties are central to
  axiomatic results for the Farrell--Jones conjecture, see for
  example~\cite[Sec.~2]{Bartels(2018)}.  For $\CAT(0)$-groups such a compromise was established in~\cite[Main
  Thm]{Bartels-Lueck(2012CAT(0)flow)} and~\cite[Thm~3.4]{Wegner(2012)}.

  For a collection $\calf$ of subgroups and $N \in \IN$ we consider the $n$+$1$-fold join
  $E^N_\calf(\Gamma) := \ast_{i=0}^N (\coprod_{V \in \calf} \Gamma/V)$\footnote{The
    infinite join $\ast_{i=0}^\infty (\coprod_{V \in \calf} \Gamma/V)$ is a model for the
    classifying $\Gamma$-CW-complex for the family consisting of all subgroups of $\Gamma$
    that are subconjugated to one  of the $V \in \calf$,
    compare~\cite[App.~A1]{Baum-Connes-Higson(1994)}.}.  As $E^N_\calf(\Gamma)$ is a
  simplicial complex we can equip it with the $\ell^\infty$-metric $d_E$.  We note that
  this metric is $\Gamma$-invariant.  
  Let $X$ be a $\CAT(0)$-space of finite covering dimension with a cocompact proper isometric $\Gamma$-action.
  We fix a basepoint in 
    $X$ and write $B_R$ for the
  closed ball of radius $R$ around the basepoint.  Let $\pi_R \colon X \to B_R$ be the
  radial projection.  Let $\CVCYC$ be the family of subgroups of $\Gamma$ that admit a map
  to a cyclic group with finite kernel.

  \begin{theorem}\label{thm:X-to-E} There is $N \in \IN$ such that for all finite
    $M \subseteq \Gamma$ and $\epsilon > 0$ there is $\calv \subseteq \CVCYC$ finite such
    that for all $L > 0$ we find $R > 0$ and a (continuous) map
    $f \colon X \to E^N_\calv(G)$ satisfying:
    \begin{enumerate}
    \item\label{thm:X-to-E:Gamma-equiv} for $x \in B_{R+L}, g \in M$ we have
      $d_E(f(gx),gf(x))< \epsilon$;
    \item\label{thm:X-to-E:pi_R} for $x \in B_{R+L}$, $R' \geq R$ we have
      $d_E(f(x),f(\pi_{R'}(x))) < \epsilon$.
    \end{enumerate}
  \end{theorem}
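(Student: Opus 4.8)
The plan is to reduce everything to the flow space of $X$ and its long, wide, bounded-dimensional covers. Let $\FS(X)$ denote the space of generalised geodesics in $X$, with the geodesic flow $\Phi$ and the induced proper isometric $\Gamma$-action. The one deep input I would use is, from~\cite{Bartels-Lueck(2012CAT(0)flow)} (see also~\cite{Wegner(2012)}), that $\dim\FS(X)<\infty$ and that there is a number $N$, depending only on $\dim X$, with the following property: for every $\lambda>0$ there is a $\Gamma$-invariant open cover $\calu=\calu_\lambda$ of $\FS(X)$ with $\dim\calu\le N$, with $\calu/\Gamma$ finite, with each $\Gamma_U$ ($U\in\calu$) lying in $\CVCYC$ (the members may be taken ``oriented'' along their periodic axes, which rules out infinite dihedral stabilisers), and with Lebesgue number $\ge\lambda$. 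I take this $N$ in the theorem.

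Given $M$ and $\epsilon$, I would set $D:=\max\bigl(1,\ \max_{g\in M\cup M^{-1}}d_X(x_0,gx_0)\bigr)$, fix $\lambda$ large (pinned down below), put $\calu:=\calu_\lambda$, and let $\calv\subseteq\CVCYC$ be the finite set of subgroups occurring as $\Gamma_U$, $U\in\calu$; this is the $\calv$ the theorem asks for, and it is chosen after $M,\epsilon$ as required. From a $\Gamma$-equivariant partition of unity subordinate to $\calu$ — concretely $\widehat\phi_U:=\phi_U\big/\sum_{U'}\phi_{U'}$ with $\phi_U:=\min\bigl(\lambda,\ d_{\FS}(\,\cdot\,,\FS(X)\setminus U)\bigr)$ — together with the standard fact that a $\Gamma$-complex of dimension $\le N$ maps $\Gamma$-equivariantly and Lipschitz-controlledly to the $N$-fold join of discrete $\Gamma$-sets (here $\coprod_{V\in\calv}\Gamma/V$ in each slot), one obtains a $\Gamma$-map $\beta\colon\FS(X)\to E^N_\calv(\Gamma)$. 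Since each $\phi_U$ is $1$-Lipschitz, $\sum_{U'}\phi_{U'}\ge\lambda$ everywhere, and at most $N{+}1$ terms are nonzero at any point, a routine estimate for barycentric maps gives a modulus of continuity $d_E\bigl(\beta(c),\beta(c')\bigr)\le C(N)\,\lambda^{-1}\,d_{\FS}(c,c')$.

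Next I would define $j\colon X\to\FS(X)$ by letting $j(y)$ be the generalised geodesic along $[x_0,y]$, parametrised from $x_0$ at time $0$ and constant afterwards; this is continuous because geodesics in a CAT(0) space depend continuously on their endpoints. The point is that $j\circ\pi_R$ is almost $\Gamma$-equivariant with a defect controlled by $D$ \emph{only}, uniformly in $R$. Indeed, for $x\in X$ and $g\in M$ the generalised geodesics $j(\pi_R(gx))=[x_0,\pi_R(gx)]$ and $g\cdot j(\pi_R(x))=[gx_0,g\pi_R(x)]$ have starting points within $D$, endpoints within $O(D)$ (projecting $gx$ onto the $R$-balls about $x_0$ and about $gx_0$, whose centres are $\le D$ apart, moves the image by $O(D)$, a standard CAT(0) estimate), and hence also lengths within $O(D)$ of each other; convexity of the distance function along geodesics in $X$, plus the fact that the flow-space metric is dominated by the supremum metric up to a constant, then gives
\[
d_{\FS}\bigl(j(\pi_R(gx)),\ g\cdot j(\pi_R(x))\bigr)\le c_1 D
\]
with $c_1$ an absolute constant. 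Now I fix $\lambda:=\lceil C(N)c_1D/\epsilon\rceil+1$ (this retroactively fixes $\calu,\calv,\beta$) and put $f:=\beta\circ j\circ\pi_R$; the construction is insensitive to $R$, so $R$ may be chosen as large as one likes and the dependence on $L$ permitted by the statement is not actually needed. The map $f$ is continuous.

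It remains to check the two conclusions. For property~(ii): $\pi_R$ is the nearest-point projection onto the closed convex ball $B_R$, so $\pi_R\circ\pi_{R'}=\pi_R$ whenever $R\le R'$ (nested projections onto $B_R\subseteq B_{R'}$), whence $f\circ\pi_{R'}=\beta\circ j\circ\pi_R\circ\pi_{R'}=\beta\circ j\circ\pi_R=f$, so the left-hand side of (ii) is literally $0$. For property~(i): for $x\in B_{R+L}$ and $g\in M$, equivariance of $\beta$ gives $g f(x)=g\,\beta\bigl(j(\pi_R(x))\bigr)=\beta\bigl(g\cdot j(\pi_R(x))\bigr)$ while $f(gx)=\beta\bigl(j(\pi_R(gx))\bigr)$, so the modulus of continuity of $\beta$ and the displayed estimate yield
\[
d_E\bigl(f(gx),\,g f(x)\bigr)\le C(N)\,\lambda^{-1}\,d_{\FS}\bigl(j(\pi_R(gx)),\,g\cdot j(\pi_R(x))\bigr)\le C(N)\lambda^{-1}c_1 D<\epsilon .
\]
The genuinely hard ingredient — and the only place where the CAT(0) geometry of $X$ really enters — is the existence of the covers $\calu_\lambda$: they must be long enough in the flow direction to have finitely many $\Gamma$-orbits and covirtually cyclic stabilisers, yet wide enough to have the prescribed Lebesgue number, all with dimension bounded independently of $\lambda$. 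This is exactly the covering theorem of~\cite{Bartels-Lueck(2012CAT(0)flow)}; everything else above is bookkeeping with standard convexity in CAT(0) spaces and elementary estimates for barycentric maps.
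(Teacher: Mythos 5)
Your overall architecture is close to the paper's (both factor through the flow space $\FS(X)$ and both rely on the long--thin--cover technology), but there is a genuine gap in the cover assumption you build everything on, and you are missing the one ingredient that the paper uses to circumvent precisely this problem: the shift $\Phi_T$ by the geodesic flow.

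You posit that for every $\lambda>0$ there is a $\Gamma$-invariant, $\CVCYC$-stabilizer, dimension-$\le N$ cover $\calu_\lambda$ of $\FS(X)$ with \emph{Lebesgue number $\ge\lambda$} (in $d_{\FS}$), and you use this to claim that the induced barycentric map $\beta$ is $C(N)\lambda^{-1}$-Lipschitz, so that a fixed defect of size $c_1D$ can be beaten by taking $\lambda$ large. That premise is false. The covers from~\cite{Bartels-Lueck-Reich(2008cover)},~\cite{Kasprowski-Rueping(2017long-thin)} (and as packaged in Propositions~\ref{prop:partition} and~\ref{prop:dimension} here) are ``$\alpha$-long'', meaning each $\Phi_{[-\alpha,\alpha]}(c)$ lies in some $U$, but they are deliberately \emph{thin} transversal to the flow --- otherwise the $\Gamma$-invariance and the condition $\Gamma_U\in\CVCYC$ are incompatible. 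Concretely, Proposition~\ref{prop:partition-no-G} produces a partition of unity whose slow modulus holds only along the flow (the term $(2N+3)\alpha/\widehat\alpha$), and a transversal modulus $\delta>0$ that is whatever the (finite, cover-dependent) Lebesgue number gives; $\delta$ is not under your control and in general cannot be made large. So $\beta$ is not $\lambda^{-1}$-Lipschitz in $d_{\FS}$, and your estimate for~\ref{thm:X-to-E:Gamma-equiv} does not close.

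The defect you are trying to absorb is moreover in exactly the wrong direction. The generalized geodesics $j(\pi_R(gx))=c_{x_0,\pi_R(gx)}$ and $g\cdot j(\pi_R(x))=c_{gx_0,g\pi_R(x)}$ are both constant before time $0$, at $x_0$ and $gx_0$ respectively, so for every time-shift $\tau\in[-\alpha,\alpha]$ the integrand $d_X(\Phi_\tau(c)(t),c'(t))/2e^{|t|}$ is at least $D/2e^{|t|}$ on a half-line, giving a contribution bounded below by roughly $De^{-\alpha}/2>0$. Thus $\fold{\FS}$ between these two points cannot be made small; the discrepancy is transversal, not along the flow, and the slow-along-the-flow property of the partition of unity does not help. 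The paper's map $f_0(x)=\Phi_T(c_{b,x})$ (Theorem~\ref{thm:X-to-FS}, via Lemmas~\ref{lem:CAT0-estimate} and~\ref{lem:estimate-d_fol}) fixes exactly this: it shifts the comparison window far from the basepoints, so the region where $c_{b,x}$ and $c_{gb,gx}$ are $D$ apart sits at time $\approx -T$ and is killed by the $e^{-|t|}$ weight, while near time $0$ they have converged by CAT(0) convexity and the only remaining slack is a time reparametrization $\tau\le\alpha$, which the $\widehat\alpha$-long cover then handles. Your choice to compose with $\pi_R$ so that $f\circ\pi_{R'}=f$ is a pleasant simplification of condition~\ref{thm:X-to-E:pi_R} (the paper instead uses Lemma~\ref{lem:pi_R'-estimate}), but without the $\Phi_T$ shift condition~\ref{thm:X-to-E:Gamma-equiv} fails, and that is where the CAT(0) geometry actually earns its keep.
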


  Theorem~\ref{thm:X-to-E} is the discrete precursor to our main result in the totally
  disconnected case, see Theorem~\ref{thm:X-to-J_intro}.  We discuss in
  Remark~\ref{rem:back-to-discrete} how it is implied by the main result.
  Theorem~\ref{thm:X-to-E} has not been stated before, but it can be viewed as a
  reformulation of the results from~\cite{Bartels-Lueck(2012CAT(0)flow),Wegner(2012)}
  cited above.

  There is a homotopy $\Gamma$-action on $B_R$ where $g \in \Gamma$ acts as
  $x \mapsto \pi_R(gx)$. 
  Roughly speaking~\ref{thm:X-to-E:Gamma-equiv} says that $f$ is
  almost $G$-equivariant and~\ref{thm:X-to-E:pi_R} ensures that the tracks of the
  homotopies of the homotopy action on $B_R$ have small images in $E^N_\calv(G)$.


  \subsection{The setup}\label{subsec:basic_setup}

  Throughout this paper we fix a td-group $G$, i.e., a  locally compact second countable topological Hausdorff group.  
    We also fix an action of $G$ on a locally compact 
  $\CAT(0)$-space $X$ of finite covering  dimension. 
  We assume that the action is by isometries, continuous, cocompact, smooth and proper.  In
  particular, all isotropy groups $G_x$ for the action are compact open in $G$.  The main
  result will also depend on a further more technical assumption.  Informally, the
  assumption is that isotropy groups of geodesics in the space of geodesics with bounded
  period for the $G$-action get only smaller in small neighborhoods, at least on a
  suitable fundamental domain.  Technically this is formulated as
  Assumption~\ref{assum:good-FS_0} using the flow space for $X$.  The main example where
  Assumption~\ref{assum:good-FS_0} is satisfied is the action of a reductive $p$-adic
  group on its associated extended Bruhat-Tits building, see Appendix~\ref{app:Bruhat-Tits}.  This
  is the main example we are interested in.


  \subsection{The space $J_\calf^N(G)$}\label{subsec:J_calf_upper_N} Let $\calf$ be a
  collection of closed subgroups of $G$.  As in the discrete case we can 
  consider for $N \in \IN$ the $N$+$1$-fold join
  \begin{equation*}
    J_\calf^N(G) := {\ast}_{i=0}^N\Big( \coprod_{V \in \calf} G/V \Big).
  \end{equation*}
  For closed subgroups $V, V'$ of $G$ the product $G/V \times G/V'$ can as a $G$-space not
  necessarily be written as a coproduct of orbits.  For this reason $J_\calf^N(G)$ is not
  $G$-CW-complex.  But it is still a numerable $G$-space in the sense
  of~\cite[Def.~2.1]{Lueck(2005s)}\footnote{We note that
    $\colim_{N \to \infty} J_\calf^N(G)$ is a model for the classifying numerable
    $G$-space for the family $\calf$, see~\cite[Def.~2.3]{Lueck(2005s)}
    and~\cite[App.~A1]{Baum-Connes-Higson(1994)}.  We will not need this fact, but it
    motivates the definition.  }.  In contrast to $E_\calf^N(\Gamma)$, there is in general
  no $G$-invariant metric on $J_\calf^N(G)$.  In fact, for a closed (but neither open nor
  compact) subgroup $V$ of $G$ there may be no $G$-invariant metric on the orbit $G/V$
  that generates the topology.  This is a more substantial difficulty to formulating
  Theorem~\ref{thm:X-to-E} for td-groups.  We will explain our solution to this difficulty
  next.


  \subsection{$V$-foliated distance in $G$}\label{subsec:V-fol-distance}
  We can equip $G$
  with a left invariant proper metric $d_G$ that generates the topology of $G$,
  see~\cite[Thm.~4.5]{Haagerup-Przybyszewska(2006)}
  or~\cite[Thm.~1.1]{Abels-Manoussos-Noskov-proper-inv-metric(2011)}.  Let $V$ be a closed
  subgroup of $G$.  As a replacement for the in general not existing $G$-invariant metric
  on $G/V$ we will use the following $V$-foliated distance in $G$.  For $g,g' \in G$,
  $\beta, \eta > 0$ we write
  \begin{equation*}
    \fold{V}(g,g') < (\beta,\eta)
  \end{equation*} 
  if there is $v \in V$ with $d_G(e,v) = d_G(g,gv) < \beta$ and $d_G(gv,g') < \eta$.  Note
  that $\fold{V}$ is left $G$-invariant in the sense that
  \[\fold{V}(g,g') < (\beta,\eta) \Longleftrightarrow \fold{V}(g''g,g''g') < (\beta,\eta)
  \]
  holds for all $g,g',g'' \in G$.

  Two elements $g,g'$ in $G$ satisfy $gV = g'V$ if and only if there exists $\beta > 0$
  such that for every $\eta > 0$ we have $\fold{V}(g,g') < (\beta,\eta)$.  One might be
  tempted to consider
  \begin{equation*}
    d_{G/V}(gV,g'V) := \inf \{ \eta \mid \fold{V}(g,g') < (\beta,\eta) \; \text{for some $\beta > 0$} \}, 
  \end{equation*}
  but this infimum can be $0$ for $gV \neq g'V$.  This happens for example for $V$ the
  subgroup of $\SL_2(\IQ_p)$ consisting of diagonal matrices $g=e$ and $g'$ unipotent.


  \subsection{The space $J_\calf^N(G)^\wedge$}
  Let $\calf$ be a collection of closed subgroups of $G$.  For $N \in \IN$ let
  \begin{equation*}
    J_\calf^N(G)^\wedge := {\ast}_{i=0}^N\Big( \coprod_{V \in \calf} G \Big).
  \end{equation*}
  The projections $G \to G/V$ induce a $G$-equivariant map
  $J_\calf^N(G)^\wedge \to  J_\calf^N(G)$.

  As $\coprod_{V \in \calf} G = G \times \calf$ we can write elements in
  $J_\calf^N(G)^\wedge$ as $[t_0(g_0,V_0),\dots,t_n(g_n,V_n)]$ with $t_i \in [0,1]$,
  $g_i \in G$, $V_i \in \calf$ such that $\sum t_i = 1$.  In this notation we have
  $[t_0(g_0,V_0),\dots,t_n(g_n,V_n)] = [t'_0(g'_0,V'_0),\dots,t'_n(g'_n,V'_n)]$ if and
  only if $t_i=t'_i$ for all $i$ and $(g_i,V_i)=(g'_i,V'_i)$ for all $i$ with
  $t_i \neq 0 \neq t'_i$.

  \subsection{$J$-foliated distance}\label{subsec:foliated-distance-in-bfJ_calf} As
  discussed in Subsection~\ref{subsec:J_calf_upper_N} there is in general no $G$-invariant
  metric on $J_\calf^N(G)$.  As a replacement we work with the following notion of
  foliated distance on $J_\calf^N(G)^\wedge$.  For
  \begin{equation*}
    y = [t_0(g_0,V_0),\ldots,t_N(g_N,V_N)], \; y' = [t'_0(g_0',V'_0),\ldots,t'_N(g'_N,V'_N)] \in J^N_\calf(G)^\wedge
  \end{equation*}
  and $\beta,\eta,\epsilon > 0$ we write
  \begin{equation*}
    \fold{J}(y,y') < (\beta,\eta,\epsilon)
  \end{equation*}
  if $|t_i-t'_i| < \epsilon$ for all $i$ and, in addition, for all $i$ with
  $\max \{ t_i,t'_i \} \geq \epsilon$ we have
  \begin{equation*}
    V_i = V'_i, \quad \text{and} \quad \fold{V_i}(g_i,g'_i) < (\beta,\eta).
  \end{equation*}
  There is a map $J^N_\calf(G)^\wedge \to \ast_{i=0}^N \calv$,
  $[t_0(g_0,V_0),\ldots,t_N(g_N,V_N)] \mapsto [t_0V_0,\ldots,t_NV_N]$.  The first
  requirement implies that the images of $y$ and $y'$ in the join $\ast_{i=0}^N \calv$ are
  of distance $< \epsilon$ with respect to the $\ell^\infty$-metric.  Two points $y$ and
  $y'$ in $J_\calf^N(G)^\wedge$ have the same image under the projection
  $J_\calf^N(G)^\wedge \to J_\calf^N(G)$ if and only if there exists $\beta > 0$ such that
  for any $\epsilon > 0$ and any $\eta > 0$ we have
  $\fold{J}(z,z') < (\beta,\eta,\epsilon)$.


  \subsection{Statement of main result}

  We write now $\CVCYC$ for the family of subgroups $V$ of $G$ which are \emph{covirtually
    cyclic}, i.e., $V$ is compact or there exists an exact sequence of topological groups
  $1\to K \xrightarrow{i} V \to \IZ \xrightarrow{p} 1$, where $i$ is the inclusion of a
  compact open subgroup $K$ of $V$ and $\IZ$ is equipped with the discrete topology.

  As before we fix a base point $b \in X$ and write $\ballinX{R}$ for the closed ball of
  radius $R$ around $b$ in $X$.  We write $\pi_R \colon X \to \ballinX{R}$ for the radial
  projection.

\begin{theorem}[Main Theorem]\label{thm:X-to-J_intro} 
  Suppose that Assumption~\ref{assum:good-FS_0} holds.
  
  \noindent There is $N \in \IN$ such that for all $M \subseteq G$ compact and
  $\epsilon > 0$ there are $\beta > 0$ and $\calv \subseteq \CVCYC$ finite with the
  following property:  For all $\eta > 0$ and all $L > 0$ we find $R > 0$ and a (not
  necessarily continuous) map $f \colon X \to J_\calv^N(G)^\wedge$ satisfying:
  \begin{enumerate}
  \item\label{thm:X-to-J_intro:G-equiv} for $x \in \ballinX{R+L}$, $g \in M$ we have
    $\fold{J}(f(gx),gf(x)) < (\beta,\eta,\epsilon)$;
  \item\label{thm:X-to-J_intro:pi_t} for $x \in \ballinX{R+L}$, $R' \geq R$ we have
    $\fold{J}(f(x),f(\pi_{R'}(x))) < (\beta,\eta,\epsilon)$;
  \item\label{thm:X-to-J_intro:rho-cont} there is $\rho > 0$ such that for all
    $x,x' \in X$ with $d_X(x,x') < \rho$ we have
    $\fold{J}(f(x),f(x')) < (\beta,\eta,\epsilon)$.
  \end{enumerate}
\end{theorem}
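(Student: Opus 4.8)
The plan is to reduce the assertion to the existence of a sufficiently \emph{long and thin} $\CVCYC$-cover of a cocompact part of the flow space $\FS = \FS(X)$ of generalized geodesics in $X$, and then to turn such a cover into the map $f$ by choosing a partition of unity together with a (discontinuous) metric lift; this follows the strategy of \cite{Bartels-Lueck(2012CAT(0)flow),Wegner(2012)} in the discrete case, the new point being that the lack of a $G$-invariant metric on orbits forces one to work ``upstairs'' on $J_\calv^N(G)^\wedge$ and to measure every error in the foliated distances $\fold{V}$ and $\fold{J}$ of Subsections~\ref{subsec:V-fol-distance}--\ref{subsec:foliated-distance-in-bfJ_calf}. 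On $\FS(X)$ the group $G$ acts by isometries, smoothly and properly, with a commuting $\IR$-flow $\Phi$; the stabilizer of a $\Phi$-periodic geodesic of bounded period is compact-by-$\IZ$, a candidate member of $\CVCYC$, and Assumption~\ref{assum:good-FS_0} is precisely what arranges, on a suitable fundamental domain, that these stabilizers only shrink under small perturbations and genuinely lie in $\CVCYC$. Transcribing the construction of long and thin covers to the td-setting then yields an integer $N$, depending only on the covering dimension of $X$, so that for every compact $M \subseteq G$ and every $\epsilon>0$ --- after fixing a width parameter governed by $M$ and $\epsilon$ --- there is a $G$-invariant open cover $\calu$ of a $G$-cocompact neighbourhood of the bounded-period geodesics in $\FS$ with $\dim\calu \le N$, with $\calu/G$ finite, with every member having stabilizer in a finite set $\calv \subseteq \CVCYC$, and with $\calu$ long along flow lines and thin transverse to them at the prescribed width; for each $U \in \calu$ with stabilizer $V$ we identify its orbit with $G/V$ and fix the section $G/V \to G$ picking a representative of minimal $d_G$-distance to $e$, thereby recording the full $G$-parametrization of $\calu$.

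To build $f$, fix a continuous map $\iota\colon X \to \FS$ (generalized geodesics issuing from $b$) and a flow time $\tau$ to be chosen with $L$; set $\widehat\iota := \Phi_\tau \circ \iota$ and choose a $G$-invariant partition of unity $\{\phi_U\}_{U \in \calu}$ subordinate to $\calu$ with uniformly continuous members, which exists since $\calu/G$ is finite and $\FS$ is metrizable. Using the colouring of $\calu$ into $N+1$ classes coming from $\dim\calu \le N$, for $x \in X$ collect the active cover elements $U \ni \widehat\iota(x)$, write each such $U$ as $g\,U'$ with $U'$ in the fixed fundamental domain and $g \in G$ the chosen minimal representative, and assemble
\[
  f(x) := \big[\,t_0(g_0,V_0),\ \ldots,\ t_N(g_N,V_N)\,\big] \in J_\calv^N(G)^\wedge,
\]
the $t_i$ being the sums of the values $\phi_U(\widehat\iota(x))$ over the $i$-th colour and $(g_i,V_i)$ the corresponding labels; off a neighbourhood of $\ballinX{R+L}$ the value of $f$ is irrelevant and may be set arbitrarily. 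The sole source of discontinuity is the minimal-representative section, which is what makes $f$ merely ``$\rho$-continuous'' in the foliated sense.

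For the verification: for~\ref{thm:X-to-J_intro:rho-cont}, choose $\rho$ so small that $d_X(x,x')<\rho$ makes $d_{\FS}(\widehat\iota(x),\widehat\iota(x'))$ tiny; then all $|\phi_U(\widehat\iota(x))-\phi_U(\widehat\iota(x'))|<\epsilon$, and the active cover elements on the two sides, being the same or neighbouring members of $\calu$, have minimal representatives at bounded $d_G$-distance, giving $\fold{J}(f(x),f(x'))<(\beta,\eta,\epsilon)$. For~\ref{thm:X-to-J_intro:G-equiv}, the cover element active at $\widehat\iota(gx)=g\widehat\iota(x)$ is $g$-translated from one active at $\widehat\iota(x)$ up to an error bounded by $d_X(b,gb)$, which is $\le\mathrm{const}(M)$ and is absorbed by the thinness of $\calu$; comparing the two minimal representatives via left-invariance of $d_G$ and of $\fold{V}$ bounds the foliated discrepancy, with $\beta$ depending only on $M$, $\epsilon$ and $\calu$, and the residual $\eta$-term made small by refining the width and enlarging $R$. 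For~\ref{thm:X-to-J_intro:pi_t}, for $x \in \ballinX{R+L}$ and $R'\ge R$ the geodesics $\widehat\iota(x)$ and $\widehat\iota(\pi_{R'}(x))$ agree on an interval around $0$ whose length grows with $R-\tau$, hence lie close in the discounted $\FS$-metric and within a short flow segment, so the longness of $\calu$ forces them to meet the same cover elements with close $\phi_U$-values. Choosing $R$ large last drives all residual terms below $\eta$, matching the quantifier order ($N$; then $M,\epsilon \leadsto \beta,\calv$; then $\eta,L \leadsto R,f$).

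The main obstacle is the first step: building the long and thin $\CVCYC$-cover of the flow space of a td-group with a dimension bound $N$ independent of $M$, $\epsilon$, $\eta$ and $L$, while simultaneously tracking the $G$-parametrizations of the orbits so that the foliated-distance estimates above close up. In particular one must show that Assumption~\ref{assum:good-FS_0} supplies, on a fundamental domain, the ``isotropy groups shrink in small neighbourhoods'' phenomenon that is automatic for discrete groups, and that the compact-by-cyclic stabilizers produced by the flow genuinely belong to $\CVCYC$ rather than merely to some larger family; one also has to control how the minimal-representative section interacts with the flow and the radial projection. Once this is in place the remaining work is a careful but routine transcription of the discrete arguments through the foliated-distance formalism, and Theorem~\ref{thm:X-to-E} is recovered by taking $G$ discrete, where $\fold{V}$ collapses to the word metric on $\Gamma/V$ and $J_\calv^N(G)^\wedge \to J_\calv^N(G) = E_\calv^N(\Gamma)$.
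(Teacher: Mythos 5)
Your broad strategy matches the paper's: factor $f$ through the flow space $\FS(X)$ as $X \xrightarrow{x\mapsto \Phi_\tau c_{b,x}} \FS \to J_\calv^N(G)^\wedge$, build a long-thin cover of $\FS$ with dimension bound $N$, form a $G$-invariant partition of unity, and assemble $f$ from the partition values and a choice of group element per active cover member, with all errors controlled by the foliated distances. The factorization, the partition of unity with slow variation along the flow, and the role of Assumption~\ref{assum:good-FS_0} are all correctly identified.

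The genuine gap is in the assignment of a group element to a cover member. You propose to take non-$G$-invariant cover elements $U$, write each as $U = gU'$ with $U'$ in a fundamental domain, and define the group element as the minimal-$d_G$ representative of the identification ``orbit of $U \cong G/V$.'' This transposes the discrete mechanism directly, but the paper's Remark~\ref{rem:discrete_versus_totally_disconnected} explains exactly why it fails for td-groups: one cannot arrange open $U \subseteq \FS$ with ``$gU\cap U\neq\emptyset \iff g\in V$, and then $gU=U$'' when $V$ is a non-open closed subgroup, because group elements near $V$ but outside $V$ still move $U$ into itself. Consequently there is no ``orbit of $U$'' that looks like $G/V$, no section $G/V\to G$ to apply, and no single group element attachable to $U$. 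The paper instead forces every cover element $U$ to be $G$-invariant (so $gU=U$ for all $g$), which is what makes the partition of unity $G$-invariant, and then constructs a \emph{point-dependent} map $h\colon U\to G$, not a label on $U$. That construction (Propositions~\ref{prop:local} and~\ref{prop:constr-h}) proceeds by choosing, for each $c\in U$, some $h(c)$ with $h(c)^{-1}c$ in a small foliated neighborhood of a basepoint $c_0\in\FS_0$, and then proving the two foliated estimates for $h$ by a compactness argument (sequences $a_n\to a\in V_{c_0}$) rather than by any minimality property of the representative. Your ``minimal representative'' would not obviously satisfy $\fold{V}(h(gc),gh(c))<(\beta,\eta)$: minimality is not compatible with left translation, and this is precisely the estimate that replaces literal equivariance in the td setting.

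A secondary issue: you identify the stabilizer $V$ of $U$ as the target of the section, but for $G$-invariant $U$ the stabilizer is all of $G$. The correct group, and the correct member of $\CVCYC$, is $V_{c_0}$, the isotropy of a flow line through a basepoint $c_0\in\FS_0\cap U$; Assumption~\ref{assum:good-FS_0} and Lemma~\ref{lem:isotropy-in-K} are used to bound these $V_{c_0}$ by finitely many $V\in\CVCYC$ uniformly over a fundamental domain. You gesture at this in your final paragraph as the ``main obstacle,'' which is accurate: it is not a routine transcription but the substantive new content (Section~\ref{sec:local}), and without the $G$-invariant-cover-plus-pointwise-$h$ mechanism the rest of your argument does not close.
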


\begin{remark}[Quantifiers]
  Using quantifiers the beginning of Theorem~\ref{thm:X-to-J_intro} reads as
  \begin{equation*}
    \exists N\;\forall M,\epsilon\; \exists \beta,\calv \; \forall \eta, L \; \exists R,f \; \text{such that}\ldots 
  \end{equation*}
\end{remark}

\begin{remark}[Failure of continuity]%
 \label{the:}
  The map $f$ appearing in Theorem~\ref{thm:X-to-J_intro} is not necessarily continuous,
  but this should not be viewed as a serious problem;~\ref{thm:X-to-J_intro:rho-cont} is a
  sufficient replacement for continuity.
  
  This issue arise in Proposition~\ref{prop:local}.  We discuss in
  Remark~\ref{rem:failure-of-cont-K(V)} how it might be circumvented with more careful
  bookkeeping.
\end{remark}

\begin{remark}[Strategy of the proof of
  Theorem~\ref{thm:X-to-J_intro}]\label{rem:strategy_of_proof_of_main_theorem}
  The proof of Theorem~\ref{thm:X-to-J_intro} will use the flow space $\FS$
  from~\cite{Bartels-Lueck(2012CAT(0)flow)} that mimics the geodesic flow on
  non-positively curved manifolds.  More precisely the map $f$ from
  Theorem~\ref{thm:X-to-J_intro} will be constructed as a composition
  \begin{equation*}
    X \xrightarrow{f_0} \FS \xrightarrow{f_1} J^N_\calv(G).
  \end{equation*} 
  The map $f_0$ is constructed in Theorem~\ref{thm:X-to-FS}.  This uses the dynamic
  properties of the flow on $\FS$ coming from the $\CAT(0)$-geometry of $X$.  The map
  $f_1$ is constructed in Theorem~\ref{thm:FS-to-J} and uses an adaptation of the long and
  thin covers for flow spaces
  from~\cite{Bartels-Lueck-Reich(2008cover),Kasprowski-Rueping(2017long-thin)} to the case
  of td-groups.
\end{remark}

\begin{remark}[About Assumption~\ref{assum:good-FS_0}]
  We do not know whether our main theorem fails in the absence of
  Assumption~\ref{assum:good-FS_0}.  We do not know whether or whether not
  Assumption~\ref{assum:good-FS_0} is always satisfied.  It is not difficult to check that
  Assumption~\ref{assum:good-FS_0} holds automatically if $G$ is discrete.
  It is not difficult to check that Assumption~\ref{assum:good-FS_0} implies that for
  $\ell > 0$ the collection of all $V_c$ with $\tau_c \leq \ell$ contains up to
  conjugation only finitely many subgroups.  We do not know whether or not the converse
  holds.
\end{remark}

\begin{remark}[Back to the discrete case]\label{rem:back-to-discrete} As any discrete
  group $\Gamma$ is also a td-group we can apply Theorem~\ref{thm:X-to-J_intro} in the
  situation of Subsection~\ref{subsec:discrete}.  Write
  $p \colon J_\calv^N(\Gamma)^\wedge \to J_\calv^N(\Gamma)=E_\calv^N(\Gamma)$ for the
  canonical projection.  As $\Gamma$ is discrete there is $\eta > 0$ such that
  $d_\Gamma(g,g') < \eta$ implies $g=g'$.  For such $\eta$ and all $\beta, \epsilon >0$,
  $y,y' \in J_\calv^N(\Gamma)^\wedge$ we then have
  \begin{equation*}
    \fold{J}(y,y') < (\beta,\eta,\epsilon) \implies d_E(p(y),p(y')) < \epsilon.
  \end{equation*}
  Therefore we can simply compose $f$ from Theorem~\ref{thm:X-to-J_intro} with $p$ to
  obtain Theorem~\ref{thm:X-to-E}.  Note that~\ref{thm:X-to-J_intro:rho-cont} from
  Theorem~\ref{thm:X-to-J_intro} implies that $p \circ f$ is continuous (in fact
  uniformly).
\end{remark}


\subsection{Acknowledgments}\label{subsec:Acknowledgements}
We thank Linus Kramer and Stefan Witzel for helpful comments and discussions.
We are grateful to the referee for further helpful comments and a list of careful corrections.

This work has been supported by the ERC Advanced Grant ``KL2MG-interactions'' (no.
662400) of the second author granted by the European Research Council, by the Deutsche
Forschungsgemeinschaft (DFG, German Research Foundation) \-– Project-ID 427320536 \-- SFB
1442, as well as under Germany's Excellence Strategy \-- GZ 2047/1, Projekt-ID 390685813,
Hausdorff Center for Mathematics at Bonn, and EXC 2044 \-- 390685587, Mathematics
M\"unster: Dynamics \-- Geometry \-- Structure.

The paper is organized as follows:

\tableofcontents


\typeout{------------------- Section 2: The flow space $\FS$ -----------------}

\section{The flow space $\FS$}\label{sec:The_flow_space_FS}


\subsection{Construction of the flow space}

Given a metric space $Z$, denote by $\FS = \FS(Z)$ the associated flow space defined
in~\cite[Section~1]{Bartels-Lueck(2012CAT(0)flow)}.  It consists of all generalized
geodesics.  A \emph{generalized geodesic} is a continuous map $c \colon \IR \to Z$ whose
restriction to some interval\footnote{By an interval we mean a set of the form $[a,b]$,
  $[a,+\infty)$, $(-\infty,b]$ or $(-\infty,+\infty)$} is an isometric embedding and is
locally constant on the complement of this interval.
We do allow that $c$ is constant. 
The metric on $\FS$ is given by
\begin{equation*}
  d_\FS(c,c') := \int_\IR \frac{d_Z(c(t),c'(t))}{2e^{|t|}} \, dt.
\end{equation*}
We recall from~\cite[Prop.~1.7]{Bartels-Lueck(2012CAT(0)flow)} that this metric generates
the topology of uniform convergence on compact subsets.  The flow $\Phi$ on $\FS$ is
defined by
\begin{equation*}
  (\Phi_\tau c)(t) := c(t+\tau).
\end{equation*}
We also write $\FS_\infty = \FS_\infty(Z)$ for the subspace of $\FS$ consisting of all
generalized geodesics that are bi-infinite geodesics, i.e., are nowhere locally constant.


\subsection{Basic facts about the flow space}

For later reference we recall some facts about $\FS$
from~\cite{Bartels-Lueck(2012CAT(0)flow)}.

\begin{lemma}\label{lem:unif-cont-flow}
  Let $(Z,d_Z)$ be a metric space.
  \begin{enumerate}
  \item\label{lem:unif-cont-flow:estimate} The map $\Phi$ is a continuous flow and we have
    for $c,d \in \FS(Z)$ and $\tau,\sigma \in \IR$
    \[
      d_{\FS}\bigl(\Phi_{\tau}(c), \Phi_{\sigma}(d)\bigr) \le e^{|\tau|} \cdot
      d_{\FS}(c,d) + |\sigma - \tau|;
    \]

  \item\label{lem:unif-cont-flow:uniform} For fixed $\alpha$ the map
    $\FS \times [-\alpha,\alpha] \to \FS, \; (c,t) \mapsto \Phi_t(c)$ is uniformly
    continuous.
  \end{enumerate}
\end{lemma}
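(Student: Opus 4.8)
The plan is to note first that both parts of the lemma are consequences of the single displayed inequality in~\ref{lem:unif-cont-flow:estimate}, so the real task is to establish that inequality together with the (purely formal) flow axioms.

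That $\Phi$ is a flow is immediate from the formula $(\Phi_\tau c)(t) = c(t+\tau)$: it gives $\Phi_0 = \id$ and $\Phi_{\sigma+\tau} = \Phi_\sigma\circ\Phi_\tau$, and if $c$ restricts to an isometric embedding on an interval $I$ and is locally constant on $\IR\setminus I$ then $\Phi_\tau c$ does the same on $I-\tau$, so $\Phi_\tau c\in\FS(Z)$ again. For the estimate I would expand
\[
  d_{\FS}\bigl(\Phi_\tau(c),\Phi_\sigma(d)\bigr)=\int_\IR\frac{d_Z\bigl(c(t+\tau),d(t+\sigma)\bigr)}{2e^{|t|}}\,dt
\]
and split the integrand via $d_Z(c(t+\tau),d(t+\sigma))\le d_Z(c(t+\tau),d(t+\tau))+d_Z(d(t+\tau),d(t+\sigma))$. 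In the first resulting integral substitute $s=t+\tau$ and use $|s-\tau|\ge|s|-|\tau|$, i.e. $e^{-|s-\tau|}\le e^{|\tau|}e^{-|s|}$, to bound it by $e^{|\tau|}\,d_{\FS}(c,d)$. In the second integral use that every generalized geodesic is $1$-Lipschitz — on each connected half-line of the complement of its defining interval it is locally constant, hence constant, and there it agrees with the adjacent endpoint value by continuity — so that $d_Z(d(t+\tau),d(t+\sigma))\le|\tau-\sigma|$; together with the normalization $\int_\IR\frac{dt}{2e^{|t|}}=1$ this bounds the second integral by $|\tau-\sigma|$. Adding the two bounds gives~\ref{lem:unif-cont-flow:estimate}.

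Continuity of $\Phi\colon\FS\times\IR\to\FS$ then follows by applying the estimate in the form $d_{\FS}(\Phi_\sigma d,\Phi_{\tau_0}c_0)\le e^{|\sigma|}\,d_{\FS}(d,c_0)+|\sigma-\tau_0|$ and observing that $e^{|\sigma|}$ remains bounded for $\sigma$ near a fixed $\tau_0$; the right-hand side then tends to $0$ as $(d,\sigma)\to(c_0,\tau_0)$. For part~\ref{lem:unif-cont-flow:uniform}, restricting to $t,t'\in[-\alpha,\alpha]$ the estimate reads $d_{\FS}(\Phi_t c,\Phi_{t'}c')\le e^\alpha\,d_{\FS}(c,c')+|t-t'|$, so given $\epsilon>0$ the choice $\delta=\epsilon/(e^\alpha+1)$ witnesses uniform continuity of $(c,t)\mapsto\Phi_t(c)$.

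I do not expect a genuine obstacle here; the two points that need a little care are the convergence of the improper integrals — guaranteed since $d_{\FS}(c,d)<\infty$ and the second integrand is bounded by $|\tau-\sigma|$ times an integrable function — and the $1$-Lipschitz property of generalized geodesics, which is exactly where the precise meaning of ``locally constant on the complement of an interval'' enters. Indeed this lemma is essentially a restatement of facts from~\cite{Bartels-Lueck(2012CAT(0)flow)}, so an alternative would be to simply cite~\cite[Sec.~1]{Bartels-Lueck(2012CAT(0)flow)}.
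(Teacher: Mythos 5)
Your argument is correct and is essentially the standard proof of this estimate: split the integrand by the triangle inequality, substitute in the first term so that the factor $e^{|\tau|}$ absorbs the time shift, and use the $1$-Lipschitz property of generalized geodesics (constant on each ray of the complement of the defining interval, isometric on the interval itself) to bound the second term by $|\sigma-\tau|$. The paper itself gives no argument here --- it simply cites Lemma~1.3 of the earlier geodesic-flow paper for assertion~(i) and observes that~(ii) follows --- so your proposal matches the paper's (outsourced) approach while supplying the details the paper delegates to the reference.
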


\begin{proof}
  Assertion~\ref{lem:unif-cont-flow:estimate} is proved
  in~\cite[Lemma~1.3]{Bartels-Lueck(2012CAT(0)flow)} and implies
  assertion~\ref{lem:unif-cont-flow:uniform}.
\end{proof}

\begin{lemma}\label{lem:FS-is-proper}
  Suppose that $Z$ is a proper metric space. Then $\FS(Z)$ is a proper metric space, and
  for any $t \in \IR$ the evaluation map $\FS \to Z$, $c \mapsto c(t)$ is proper and
  uniformly continuous.
\end{lemma}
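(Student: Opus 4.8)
The plan is to reduce both assertions to the corresponding facts about the metric space $Z$ together with the explicit formula for $d_\FS$. First I would record the key estimate comparing $d_Z(c(0),c'(0))$ with $d_\FS(c,c')$: on the one hand, for any $c,c' \in \FS$ and any $t$, the $1$-Lipschitz property of generalized geodesics gives $d_Z(c(t),c'(t)) \le d_Z(c(0),c'(0)) + 2|t|$, so integrating against the weight $\frac{1}{2e^{|t|}}$ yields $d_\FS(c,c') \le d_Z(c(0),c'(0)) + C$ for an absolute constant $C = \int_\IR \frac{|t|}{e^{|t|}}\,dt$; and more usefully, $d_Z(c(0),c'(0)) \le e^{|t|}(\dots)$-type bounds, but the cleanest input is that for fixed $t$ the evaluation $\ev_t \colon \FS \to Z$ is uniformly continuous, which follows directly from Lemma~\ref{lem:unif-cont-flow} applied with $\Phi_t$ (or from a direct estimate, cf.\ \cite[Prop.~1.7 and Lemma~1.10]{Bartels-Lueck(2012CAT(0)flow)}).

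For uniform continuity of $\ev_t$ I would argue: given $\epsilon > 0$, choose $T$ large so that $\int_{|s| > T} \frac{2 d_{\mathrm{diam}}}{2e^{|s|}}\,ds$-type tails are controlled — more precisely, use that if $d_\FS(c,c')$ is small then $d_Z(c(s),c'(s))$ is small on a set of positive measure near any chosen point, and then propagate this using that $s \mapsto d_Z(c(s),c'(s))$ is $2$-Lipschitz (since $c,c'$ are $1$-Lipschitz). Concretely: if $d_\FS(c,c') < \delta$, then $\int_{-1}^{1} d_Z(c(s),c'(s))\,ds < 2e\,\delta$, so there is some $s_0 \in [-1,1]$ with $d_Z(c(s_0),c'(s_0)) < e\delta$; by the Lipschitz bound $d_Z(c(0),c'(0)) < e\delta + 2$. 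This is not yet small, so instead I iterate/bootstrap: once $d_Z(c(s),c'(s))$ is bounded by $\sup_{|s|\le 1}$ of some value $A$, the integral $\int_{-1}^1 d_Z(c(s),c'(s))\,ds \ge$ (lower bound forcing pointwise smallness where the Lipschitz function can't dip). The clean way is the standard one: a $2$-Lipschitz nonnegative function $\phi$ on $[-1,1]$ with $\int_{-1}^1 \phi < \mu$ satisfies $\phi(0) < \sqrt{2\mu} + (\text{error})$; thus $d_Z(c(0),c'(0)) \le \sqrt{4e\delta}$ roughly, which tends to $0$ with $\delta$. For general $t$ in place of $0$, apply this on the interval $[t-1,t+1]$ and use the weight lower bound $\frac{1}{2e^{|s|}} \ge \frac{1}{2e^{|t|+1}}$ there. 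This proves $\ev_t$ is uniformly continuous; it is surjective onto $Z$ (constant geodesics), so it remains to show properness.

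For properness of $\ev_t$ and of $\FS(Z)$: I would show that the preimage under $\ev_0$ (WLOG $t=0$) of a closed ball $\bar B(z_0,r) \subseteq Z$ is compact. This preimage is the set $K$ of generalized geodesics with $c(0) \in \bar B(z_0,r)$. By the Arzelà–Ascoli theorem on $\FS$ (recall $d_\FS$ induces uniform convergence on compacta, \cite[Prop.~1.7]{Bartels-Lueck(2012CAT(0)flow)}), it suffices to check that $K$ is closed, that $\{c(0) : c \in K\}$ is relatively compact in $Z$ — it lies in $\bar B(z_0,r)$, which is compact since $Z$ is proper — and that $K$ is equicontinuous, which is immediate as every $c \in K$ is $1$-Lipschitz. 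Closedness of $K$ follows because $\ev_0$ is continuous. Hence $K$ is compact, so $\ev_0$ is proper. Properness of $\FS(Z)$ itself then follows: a closed ball in $\FS$ around some $c_0$ is contained in $\ev_0^{-1}(\bar B(c_0(0), r'))$ for suitable $r'$ by the estimate $d_Z(c(0),c_0(0)) \le e \cdot d_\FS(c,c_0) + 2$ derived above (or any such bound), hence is a closed subset of a compact set, hence compact. The main obstacle is the uniform-continuity estimate for $\ev_t$ — turning "small integral of a Lipschitz function" into "small pointwise value" cleanly with constants uniform in $t$ — but this is elementary once the Lipschitz bound on $s \mapsto d_Z(c(s),c'(s))$ is in hand, and in any case it is already contained in \cite[Sec.~1]{Bartels-Lueck(2012CAT(0)flow)}, which we may cite.
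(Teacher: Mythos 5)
Your argument is correct in substance, but note that the paper's own ``proof'' of this lemma is just a citation to~\cite[Prop.~1.9 and Lem.~1.10]{Bartels-Lueck(2012CAT(0)flow)}, so there is no in-paper argument to compare against; what you have done is reconstruct (correctly) the standard proof that the cited reference gives. The ingredients you use are exactly the right ones: (a) generalized geodesics are $1$-Lipschitz, hence $s\mapsto d_Z(c(s),c'(s))$ is $2$-Lipschitz; (b) this lets you convert a bound on the weighted integral $d_\FS(c,c')$ into a pointwise bound $d_Z(c(t),c'(t))\lesssim \sqrt{e^{|t|}\,d_\FS(c,c')}$ for small distances (your ``small integral of a Lipschitz function forces small pointwise value'' step is correct), and into the cruder bound $d_Z(c(0),c'(0))\le e\cdot d_\FS(c,c')+2$, giving uniform continuity of $\ev_t$ and the inclusion of $d_\FS$-balls into $\ev_0$-preimages of $d_Z$-balls; (c) Arzel\`a--Ascoli, using pointwise relative compactness (which you should state for all $s$, not only $s=0$, but it follows immediately since $c(s)\in \overline{B}(z_0,r+|s|)$) and equicontinuity from the Lipschitz property. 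One point you gloss over and should at least mention: for the Arzel\`a--Ascoli argument to hand you a compact subset of $\FS$ rather than of $C(\IR,Z)$, you need that $\FS(Z)$ is closed in $C(\IR,Z)$ under uniform convergence on compacta, i.e.\ that a locally uniform limit of generalized geodesics is again a generalized geodesic; this is true but requires a short argument about the behavior of the defining intervals, and it is where the real content of the cited Prop.~1.9 lies. Also, the parenthetical remark that uniform continuity of $\ev_t$ ``follows directly from Lemma~\ref{lem:unif-cont-flow}'' is only right if read as: once $\ev_0$ is shown uniformly continuous by the direct estimate, write $\ev_t=\ev_0\circ\Phi_t$ and use uniform continuity of $\Phi_t$ from that lemma; the lemma alone does not give uniform continuity of evaluation.
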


\begin{proof}
  See~\cite[Prop.~1.9 and Lem.~1.10]{Bartels-Lueck(2012CAT(0)flow)}.
\end{proof}

\begin{lemma}\label{lem:G-on-FS-proper}
  If $G$ acts cocompactly, isometrically, or properly respectively on the proper metric
  space $Z$, then the $G$-action on $\FS$ is cocompact, isometric or proper respectively.
\end{lemma}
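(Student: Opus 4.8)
The plan is to reduce all three assertions to the evaluation map $\ev_0 \colon \FS \to Z$, $c \mapsto c(0)$. Recall that $G$ acts on $\FS = \FS(Z)$ by postcomposition, $g\cdot c \colon t \mapsto g\cdot(c(t))$: since $g$ acts on $Z$ by an isometry, $g\cdot c$ is again a generalized geodesic (it is an isometric embedding on exactly the interval on which $c$ is, and is locally constant off it), so this is well defined, and $\ev_0$ is $G$-equivariant. The isometry statement is then immediate: for $c,c' \in \FS$ and $g \in G$,
\[
  d_\FS(g\cdot c, g\cdot c') = \int_\IR \frac{d_Z\bigl(g\cdot c(t), g\cdot c'(t)\bigr)}{2e^{|t|}}\, dt = \int_\IR \frac{d_Z\bigl(c(t), c'(t)\bigr)}{2e^{|t|}}\, dt = d_\FS(c,c'),
\]
using that $G$ preserves $d_Z$.

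For properness, I would use that $\FS$ is a proper metric space (Lemma~\ref{lem:FS-is-proper}), hence locally compact, so it suffices to check that $\{g \in G \mid gK \cap K \neq \emptyset\}$ is relatively compact for every compact $K \subseteq \FS$. Given such a $K$, the set $\ev_0(K) \subseteq Z$ is compact, and if $gK \cap K \neq \emptyset$ then $g\cdot\ev_0(K) \cap \ev_0(K) \neq \emptyset$ by equivariance of $\ev_0$. Since the $G$-action on $Z$ is proper, the set of such $g$ is relatively compact, and hence so is $\{g \in G \mid gK \cap K \neq \emptyset\}$.

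For cocompactness I would additionally invoke that $\ev_0$ is a \emph{proper} map (Lemma~\ref{lem:FS-is-proper}). Fix a compact $C_Z \subseteq Z$ with $G\cdot C_Z = Z$ and set $C := \ev_0^{-1}(C_Z)$, which is compact. For any $c \in \FS$ write $c(0) = g\cdot z$ with $g \in G$, $z \in C_Z$; then $(g^{-1}\cdot c)(0) = z \in C_Z$, so $g^{-1}\cdot c \in C$ and $c \in G\cdot C$. Hence $G\cdot C = \FS$, so the action on $\FS$ is cocompact. I do not expect a serious obstacle: the argument rests entirely on Lemma~\ref{lem:FS-is-proper} (continuity, properness and $G$-equivariance of $\ev_0$) and on the observation that an isometry sends generalized geodesics to generalized geodesics; the only point that needs a word of justification is the standard reformulation of properness of a group action used above, which applies because $G$ is locally compact and $\FS$ is a proper metric space.
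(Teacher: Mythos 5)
Your proof is correct and takes essentially the same approach as the paper: both arguments rest on the evaluation map $c \mapsto c(0)$ being a proper, $G$-equivariant map $\FS \to Z$ (Lemma~\ref{lem:FS-is-proper}), together with the fact that $\FS$ is a proper metric space. The only difference is packaging — the paper simply cites the general facts that a proper $G$-map pulls back properness and cocompactness (Lemma~\ref{lem:properties_of_groups_actions}~\ref{lem:properties_of_groups_actions:proper_and_subspaces:surjective_maps_and_proper} and~\ref{lem:properties_of_groups_actions:proper_and_subspaces:surjective_maps_and_cocompact}), whereas you re-derive their content inline via the compact-set characterization of properness.
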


\begin{proof} Obviously the $G$-actions on $\FS$ is isometric if $G$ acts isometrically on
  $Z$.  We conclude from Lemma~\ref{lem:properties_of_groups_actions}~%
\ref{lem:properties_of_groups_actions:proper_and_subspaces:surjective_maps_and_proper}
  and~\ref{lem:properties_of_groups_actions:proper_and_subspaces:surjective_maps_and_cocompact}
  and Lemma~\ref{lem:FS-is-proper} that the $G$-action on $\FS$ is proper or cocompact
  respectively if $G$ acts proper or cocompact respectively on $Z$.
\end{proof}


\subsection{Foliated distance in $\FS$}\label{subsec:foliated-distance-FS}

For $c,c' \in \FS$, $\alpha,\delta > 0$ we write
\begin{equation*}
  \fold{\FS}(c,c') < (\alpha,\delta)
\end{equation*}
to mean that there is $t \in [-\alpha,\alpha]$ with $d_\FS(\Phi_t(c),c') < \delta$. We set
\begin{equation*}
  U^\fol_{\alpha,\delta}(c) := \{ c' \in \FS \mid \fold{\FS}(c,c') < (\alpha,\delta) \}.
\end{equation*}

\begin{lemma}[Basics about foliated distance]\label{lem:Basics_about_foliated_distance}\
  \begin{enumerate}
  \item\label{lem:Basics_about_foliated_distance:foliated_versus-distance} For
    $c,c' \in \FS$, we get
    \[
      \fold{\FS}(c,c') < (\alpha,\delta) \; \implies \; d_{\FS}(c',c) < \alpha + \delta;
    \]
  \item\label{Basics_about_foliated_distance:left_invariance} For $c,c' \in \FS$ and
    $g \in G$, we have
    \[
      \fold{\FS}(c,c') < (\alpha,\delta) \Longleftrightarrow \fold{\FS}(gc,gc') <
      (\alpha,\delta).
    \]
  \end{enumerate}
\end{lemma}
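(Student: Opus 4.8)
The plan is to derive both statements directly from the definition of $\fold{\FS}$ together with the flow estimate in Lemma~\ref{lem:unif-cont-flow}~\ref{lem:unif-cont-flow:estimate}.

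For~\ref{lem:Basics_about_foliated_distance:foliated_versus-distance}, suppose $\fold{\FS}(c,c') < (\alpha,\delta)$ and pick $t \in [-\alpha,\alpha]$ with $d_\FS(\Phi_t(c),c') < \delta$. First I would bound $d_\FS(c,\Phi_t(c))$: applying Lemma~\ref{lem:unif-cont-flow}~\ref{lem:unif-cont-flow:estimate} with $\tau = 0$, $\sigma = t$ and with $d$ taken to be $c$ gives $d_\FS(c,\Phi_t(c)) = d_\FS(\Phi_0(c),\Phi_t(c)) \le e^{0}\cdot d_\FS(c,c) + |t-0| = |t| \le \alpha$. Then the triangle inequality $d_\FS(c,c') \le d_\FS(c,\Phi_t(c)) + d_\FS(\Phi_t(c),c') < \alpha + \delta$ finishes this part.

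For~\ref{Basics_about_foliated_distance:left_invariance}, I would use two facts. First, the $G$-action on $\FS$ is isometric, as recorded in the proof of Lemma~\ref{lem:G-on-FS-proper} (here $G$ acts isometrically on $Z$). Second, the flow commutes with the $G$-action: for $g \in G$ and $\tau, t \in \IR$ one has $(\Phi_\tau(gc))(t) = (gc)(t+\tau) = g\cdot c(t+\tau) = (g\,\Phi_\tau(c))(t)$, hence $\Phi_\tau(gc) = g\,\Phi_\tau(c)$. Combining, for any $t \in [-\alpha,\alpha]$ we get $d_\FS(\Phi_t(gc),gc') = d_\FS(g\,\Phi_t(c),gc') = d_\FS(\Phi_t(c),c')$. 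Thus the same $t$ witnesses $\fold{\FS}(gc,gc') < (\alpha,\delta)$ exactly when it witnesses $\fold{\FS}(c,c') < (\alpha,\delta)$, which proves the implication ``$\Rightarrow$''; applying this to $g^{-1}$ in place of $g$ (and to $gc, gc'$ in place of $c, c'$) yields ``$\Leftarrow$''.

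There is no real obstacle here: the only inputs beyond the definitions are the flow estimate of Lemma~\ref{lem:unif-cont-flow} and the observation that $\Phi$ is $G$-equivariant, both immediate. The one point to be careful about is to invoke the estimate with the parameter choice $\tau = 0$ so that the factor $e^{|\tau|}$ becomes $1$ and the bound $|t| \le \alpha$ comes out cleanly.
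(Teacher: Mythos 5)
Your proof is correct and takes essentially the same route as the paper: part (i) uses the unit-speed estimate $d_\FS(\Phi_t(c),c)\le|t|$ from Lemma~\ref{lem:unif-cont-flow}~\ref{lem:unif-cont-flow:estimate} plus the triangle inequality, and part (ii) uses the left $G$-invariance of $d_\FS$ together with the $G$-equivariance of $\Phi$. Your version merely spells out the parameter choice $\tau=0$, $\sigma=t$ and the commutation $\Phi_\tau(gc)=g\Phi_\tau(c)$, which the paper states more tersely.
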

\begin{proof}~\ref{lem:Basics_about_foliated_distance:foliated_versus-distance} This
  follows from the triangle inequality, since $\Phi$ has at most unit speed, i.e.,
  $d_{\FS}(\Phi_t(c),c) \le |t|$ for all $c \in \FS$ and $t \in \IR$, by
  Lemma~\ref{lem:unif-cont-flow}~\ref{lem:unif-cont-flow:estimate}.
  \\[1mm]~\ref{Basics_about_foliated_distance:left_invariance} Recall that $d_{\FS}$ left
  $G$-invariant and $\Phi$ is compatible with the $G$-action and hence
  $d_{\FS}(\Phi_t(gc),gc') = d_{\FS}(\Phi_t(c),c')$ for all $c,c' \in \FS$, $g \in G$ and
  $t \in \IR$.
\end{proof}

\begin{lemma}[Symmetry and triangle inequality for the foliated
  distance]\label{lem:sym_plus_triangle-fol} \
  \begin{enumerate}
  \item\label{lem:sym_plus_triangle-fol:sym} For $\alpha > 0$, $\delta > 0$ there is
    $\epsilon > 0$ such that for all $c,c' \in \FS$
    \begin{equation*}
      \fold{\FS}(c,c') < (\alpha,\epsilon)  \;  \implies  \; \fold{\FS}(c',c) <  (\alpha,\delta);    
    \end{equation*}	
  \item\label{lem:sym_plus_triangle-fol:triangle} For $\alpha > 0$, $\delta >0$ there is
    $\epsilon > 0$ such that for all $c,c',c'' \in \FS$
    \begin{equation*}
      \fold{\FS}(c,c'),\fold{\FS}(c',c'') < (\alpha,\epsilon) \;  \implies  \; \fold{\FS}(c,c'') <  (2\alpha,\delta).   
    \end{equation*}  
  \end{enumerate}
\end{lemma}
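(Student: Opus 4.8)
The plan is to deduce both assertions directly from the basic flow estimate in Lemma~\ref{lem:unif-cont-flow}~\ref{lem:unif-cont-flow:estimate}, together with the flow identities $\Phi_\tau\circ\Phi_\sigma=\Phi_{\tau+\sigma}$ and $\Phi_{-t}\circ\Phi_t=\id$. In both cases I would fix the time shifts witnessing the hypotheses and track how the $\FS$-distances transform under applying $\Phi$.

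For~\ref{lem:sym_plus_triangle-fol:sym}, assume $\fold{\FS}(c,c')<(\alpha,\epsilon)$, so there is $t\in[-\alpha,\alpha]$ with $d_{\FS}(\Phi_t(c),c')<\epsilon$. I would apply $\Phi_{-t}$ to both geodesics. Since $\Phi_{-t}(\Phi_t(c))=c$, Lemma~\ref{lem:unif-cont-flow}~\ref{lem:unif-cont-flow:estimate} (with $\tau=\sigma=-t$) gives
\[
  d_{\FS}(\Phi_{-t}(c'),c)=d_{\FS}\bigl(\Phi_{-t}(c'),\Phi_{-t}(\Phi_t(c))\bigr)\le e^{|t|}\,d_{\FS}(c',\Phi_t(c))< e^{\alpha}\epsilon .
\]
As $-t\in[-\alpha,\alpha]$, this yields $\fold{\FS}(c',c)<(\alpha,e^{\alpha}\epsilon)$, so $\epsilon:=e^{-\alpha}\delta$ works.

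For~\ref{lem:sym_plus_triangle-fol:triangle}, assume $\fold{\FS}(c,c'),\fold{\FS}(c',c'')<(\alpha,\epsilon)$, witnessed by $t,s\in[-\alpha,\alpha]$ with $d_{\FS}(\Phi_t(c),c')<\epsilon$ and $d_{\FS}(\Phi_s(c'),c'')<\epsilon$. I would use the composed shift $t+s\in[-2\alpha,2\alpha]$. Since $\Phi_{t+s}(c)=\Phi_s(\Phi_t(c))$, the triangle inequality in $\FS$ and Lemma~\ref{lem:unif-cont-flow}~\ref{lem:unif-cont-flow:estimate} (with $\tau=\sigma=s$) give
\[
  d_{\FS}(\Phi_{t+s}(c),c'')\le d_{\FS}\bigl(\Phi_s(\Phi_t(c)),\Phi_s(c')\bigr)+d_{\FS}(\Phi_s(c'),c'')\le e^{|s|}\,d_{\FS}(\Phi_t(c),c')+\epsilon< (e^{\alpha}+1)\epsilon .
\]
Hence $\fold{\FS}(c,c'')<(2\alpha,(e^{\alpha}+1)\epsilon)$, and $\epsilon:=\delta/(e^{\alpha}+1)$ finishes the argument.

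I do not expect a genuine obstacle here; the only points needing care are the asymmetry of the flow estimate — the factor $e^{|t|}$, which forces $\epsilon$ to be shrunk by a factor depending on $\alpha$ rather than left equal to $\delta$ — and the bookkeeping of the time parameters: in~\ref{lem:sym_plus_triangle-fol:sym} the reversed shift $-t$ still lies in $[-\alpha,\alpha]$, whereas in~\ref{lem:sym_plus_triangle-fol:triangle} the composed shift $t+s$ can have modulus up to $2\alpha$, which is exactly why the foliated radius doubles in the conclusion.
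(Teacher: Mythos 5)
Your proof is correct and follows essentially the same route as the paper: in both parts you fix the time shift(s) witnessing the hypothesis, conjugate by $\Phi_{-t}$ (resp.\ compose the shifts), and control the resulting distance via Lemma~\ref{lem:unif-cont-flow}~\ref{lem:unif-cont-flow:estimate}. The only cosmetic difference is that the paper phrases the flow estimate as a qualitative uniform-continuity statement and picks $\epsilon$ implicitly, while you use the explicit exponential bound to write down $\epsilon = e^{-\alpha}\delta$ and $\epsilon = \delta/(e^{\alpha}+1)$ directly.
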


\begin{proof}~\ref{lem:sym_plus_triangle-fol:sym} Given $\alpha > 0$, $\delta > 0$, we
  conclude from Lemma~\ref{lem:unif-cont-flow}~\ref{lem:unif-cont-flow:estimate}, that
  there is $\epsilon > 0$ such that $d_{\FS}(\Phi_t(c),\Phi_t(c')) < \delta$,  whenever $t \in [-\alpha,\alpha]$ and
  $d_{\FS}(c,c') < \epsilon$.  For~\ref{lem:sym_plus_triangle-fol:sym}, suppose now
  $\fold{\FS}(c,c') < (\alpha,\epsilon)$.  Then there is $t \in [-\alpha,\alpha]$ with
  $d_{\FS}(\Phi_t(c),c') < \epsilon$.  This implies
  $d_{\FS}(c,\Phi_{-t}(c')) = d_{\FS} (\Phi_{-t}(\Phi_t(c)),\Phi_{-t}(c')) < \delta$ and
  therefore $\fold{\FS}(c',c) < (\alpha,\delta)$.
  \\[1mm]~\ref{lem:sym_plus_triangle-fol:triangle} Given $\alpha > 0$, $\delta > 0$, we
  find again $\epsilon > 0$ as in~\ref{lem:sym_plus_triangle-fol:sym}.  If
  $\fold{\FS}(c,c'), \fold{\FS}(c',c'') < (\alpha,\epsilon)$, then there exists
  $t,t' \in [-\alpha,\alpha]$ satisfying
  $d_{\FS}(\Phi_t(c),c'),d_{\FS}(\Phi_{t'}(c),c'') < \epsilon$.  This yields
  $d_{\FS}(\Phi_{t+t'}(c),c'') \le d_{\FS}(\Phi_{t'}(\Phi_t(c)), \Phi_{t'}(c')) +
  d_{\FS}(\Phi_{t'}(c'),c'') < \delta + \epsilon$. Hence
  $\fold{\FS}(c,c'') < (2\alpha,\delta+\epsilon)$.  Using $\delta' := \delta/2$ in place
  of $\delta$ and asking in addition for $\epsilon < \delta/2$
  gives~\ref{lem:sym_plus_triangle-fol:triangle}.
\end{proof}

\subsection{The groups $K_c$ and $V_c$}

For $c \in \FS(X)$ we set~\refstepcounter{theorem}
\begin{enumerate}[label=(\thetheorem\alph*),leftmargin=*]
\item\label{K_c}
  $K_c := \; G_c = \;\{g \in G \mid gc=c \} =\; \{ g \in G \mid gc(t)=c(t) \; \text{for
    all} \; t \in \IR \}$;\vspace{1ex}
\item\label{V_c}
  $V_c \; := \; \{ g \in G \mid \exists t \in \IR \, : \, gc = \Phi_t(c) \}$;\vspace{1ex}
\item\label{tau_c}
  $\tau_c \; \; := \; \inf \{ t > 0 \mid \exists v \in V_c \setminus K_c, \text{with} \;
  \Phi_t(c)=vc \}$.
\end{enumerate}

We use $\inf \emptyset = \infty$.  If $\tau_c < \infty$ then we say that $c$ is
\emph{periodic}.  We have $K_c \subseteq V_c$ as the flow is $G$-equivariant.

\begin{assumption}\label{assum:good-FS_0} There exists $\FS_0 \subseteq \FS$ compact such
  that
  \begin{enumerate}[label=(\thetheorem\alph*),leftmargin=*]
  \item\label{assum:good-FS_0:fund-domain} $G \cdot \FS_0 = \FS$;
  \item\label{assum:good-FS_0:V} for $\ell > 0$ and $c_0 \in \FS_0$ there exists an open
    neighborhood $U$ of $c_0$ in $\FS_0$ such that for all $c \in U$ with
    $\tau_c \leq \ell$ we have $V_c \subseteq V_{c_0}$.
  \end{enumerate}
\end{assumption}

\begin{lemma}\label{lem:about-tau_c} Let $c \in \FS$ be periodic.  Then
  $c \in \FS_\infty$.  Moreover, there is $v \in V_c$ with $vc=\Phi_{\tau_c}(c)$ and any
  such $v$ together with $K_c$ generates $V_c$.
\end{lemma}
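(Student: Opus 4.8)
The statement has three parts: (1) a periodic $c$ lies in $\FS_\infty$, i.e.\ is nowhere locally constant; (2) the infimum defining $\tau_c$ is attained by some $v \in V_c$; (3) such a $v$, together with $K_c$, generates $V_c$. I would treat them in this order.

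\textbf{Part (1).} Suppose $c$ is periodic, so there are $t_0 > 0$ and $v \in V_c \setminus K_c$ with $\Phi_{t_0}(c) = vc$. Since $v$ acts isometrically on $X$, the generalized geodesic $vc = \Phi_{t_0}(c)$ has an isometric-embedding interval that is the translate (by $-t_0$) of the one for $c$; but $vc$ being a flowed copy of $c$, it has \emph{the same} isometric-embedding interval shape only up to shift. The key point: if the isometric interval $I$ of $c$ were bounded above, say $I \subseteq (-\infty, b]$, then $c$ is eventually constant on $[b,\infty)$; applying $\Phi_{t_0}$ shifts this to $[b - t_0, \infty)$, and iterating $\Phi_{nt_0}(c) = v^n c$ pushes the ``eventually constant'' region arbitrarily far left, forcing $c$ to be globally constant on all of $\IR$ — but then $v c = c$, contradicting $v \notin K_c$. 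Similarly $I$ cannot be bounded below. Hence $I = \IR$ and $c \in \FS_\infty$. (One must also rule out $c$ constant: a constant $c$ has $vc = c$ for the $v$ exhibiting periodicity, again contradicting $v \notin K_c$; alternatively, constant geodesics have $\Phi_t(c) = c$ for all $t$, so no such $v$ exists.)

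\textbf{Part (2).} Now $c$ is a bi-infinite geodesic. Choose $v_n \in V_c \setminus K_c$ and $t_n \searrow \tau_c$ with $\Phi_{t_n}(c) = v_n c$. The elements $v_n$ all lie in the set $\{g \in G \mid d_X(g\,c(0), c(t_n)) = 0\}$-type locus; more robustly, since $\Phi_{t_n}(c) \to \Phi_{\tau_c}(c)$ in $\FS$ (the flow is continuous, Lemma~\ref{lem:unif-cont-flow}), the points $v_n c = \Phi_{t_n}(c)$ converge in $\FS$, hence lie in a compact subset of the orbit $Gc$; since the $G$-action on $\FS$ is proper (Lemma~\ref{lem:G-on-FS-proper}), after passing to a subsequence $v_n \to v$ for some $v \in G$, and by continuity of the action $vc = \lim v_n c = \Phi_{\tau_c}(c)$. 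So $v \in V_c$ and $\Phi_{\tau_c}(c) = vc$. It remains to note $\tau_c > 0$: this is because $c$ being a genuine bi-infinite geodesic (nonconstant), $\Phi_t(c) \neq c$ for $t \neq 0$, and properness/discreteness of the stabilizer-type sets prevents nonzero translation lengths from accumulating at $0$ — more precisely the set of $t$ with $\Phi_t(c) \in Gc$ is a closed subgroup of $\IR$ that is not all of $\IR$ (as $c \notin \FS$ is not fixed by the full flow) and not just $\{0\}$ (as $c$ is periodic), hence is $\tau_c \IZ$ with $\tau_c > 0$; and $v \notin K_c$ automatically since $vc = \Phi_{\tau_c}(c) \neq c$.

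\textbf{Part (3).} Let $v$ be as in Part (2). Given any $w \in V_c$, there is $s \in \IR$ with $wc = \Phi_s(c)$. Writing $H := \{t \in \IR \mid \Phi_t(c) \in Gc\}$, we showed $H = \tau_c \IZ$, so $s = n\tau_c$ for some $n \in \IZ$. Then $v^{-n} w c = \Phi_{-n\tau_c}(\Phi_s(c)) = \Phi_0(c) = c$, so $v^{-n} w \in K_c$, i.e.\ $w \in v^n K_c \subseteq \langle v, K_c \rangle$. Conversely $\langle v, K_c\rangle \subseteq V_c$ since $v, K_c \subseteq V_c$ and $V_c$ is a subgroup (the translation values add under multiplication). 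Hence $V_c = \langle v, K_c \rangle$.

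\textbf{Main obstacle.} The delicate point is establishing that the translation-length set $H = \{t \mid \Phi_t(c) \in Gc\}$ is a \emph{discrete} subgroup of $\IR$ equal to $\tau_c\IZ$ with $\tau_c > 0$ — i.e.\ that the infimum is positive and attained and that $H$ has no other elements. Positivity and attainment both rest on properness of the $G$-action on $\FS$ (Lemma~\ref{lem:G-on-FS-proper}) combined with continuity of the flow; one has to argue carefully that a sequence $\Phi_{t_n}(c) = v_n c$ with $t_n$ bounded forces $\{v_n\}$ into a compact set (using that $\{\Phi_{t_n}(c)\}$ is precompact in $\FS$ and properness of the action), and that $t_n \to 0$ would give $v_n \to$ (something stabilizing $c$) forcing $\Phi_{t_n}(c) = c$ for large $n$, contradicting nonconstancy of the geodesic $c$. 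Once $H = \tau_c\IZ$ is in hand, Parts (2) and (3) are immediate.
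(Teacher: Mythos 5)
Your Part~(1) is a valid alternative to the paper's proof (the paper instead observes that any $v \in V_c$ fixes an endpoint of the image of $c$ pointwise, forcing $V_c = K_c$ when $c \notin \FS_\infty$), and Part~(3) is correct granted Part~(2). The gap is in Part~(2), exactly at the spot you flag as the ``main obstacle'': showing that $H := \{t \in \IR \mid \Phi_t(c) \in Gc\}$ is a \emph{discrete} subgroup of $\IR$, i.e.\ ruling out $H = \IR$. Your parenthetical reason --- that $H \neq \IR$ ``as $c$ is not fixed by the full flow'' --- is a non sequitur: $H = \IR$ only says each $\Phi_t(c)$ stays in the orbit $Gc$, not that $\Phi_t(c) = c$. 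And your sketched fix --- $t_n \to 0$, $v_n \to v_0 \in K_c$, ``forcing $\Phi_{t_n}(c) = c$ for large $n$'' --- is unjustified: $v_n \to v_0$ with $v_0 c = c$ does not give $v_n c = c$ eventually unless $K_c$ is open in $G$, which is neither assumed nor obvious ($K_c = G_c = \bigcap_{t \in \IR} G_{c(t)}$ is an infinite intersection of open subgroups). In fact nothing in the tools you invoke distinguishes the present setup from, say, $G = \IR$ acting on $X = \IR$ by translation --- proper, isometric, cocompact, continuous, but not smooth --- where every bi-infinite geodesic has $H = \IR$ and $\tau_c = 0$, and the conclusion of the lemma fails. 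So properness of $G \curvearrowright \FS$ plus continuity of the flow genuinely cannot close this on their own.

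What is missing is the paper's standing \emph{smoothness} assumption on $G \curvearrowright X$, which you never invoke. The paper proves discreteness by evaluating at a point: if $t_n \to 0$ with $t_n \neq 0$ and $v_n c = \Phi_{t_n}(c)$, then for any fixed $s$ one has $v_n \cdot c(s) = (\Phi_{t_n}c)(s) = c(s+t_n) \to c(s)$, and $c(s+t_n) \neq c(s)$ because $c \in \FS_\infty$ is injective; so $c(s)$ is a genuine accumulation point of its own $G$-orbit in $X$. But smooth and proper actions have discrete orbits, Lemma~\ref{lem:properties_of_groups_actions}~\ref{lem:properties_of_groups_actions:smooth_and_discrete_orbits}, which is a contradiction. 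Replacing your hand-wave with this evaluation-at-$c(s)$ step and explicitly invoking smoothness closes the gap; the remainder of your argument then goes through.
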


\begin{proof}
  Consider $c \not\in \FS_\infty$. Then $c$ is constant on some interval $(-\infty,a_-)$ or
  $(a_+,\infty)$.  This implies that for all $v \in V_c$ there is $t$ with $vc(t) = c(t)$.
  In particular, $v$ fixes an endpoint of the image of $c$ which is a finite geodesic or a geodesic ray.
  Since $v$ also fixes the image of $c$ as a set, it must in fact fix $c$.  
  Hence $K_c =  V_c$.
  If $c$ is periodic we have
  $K_c \subsetneq V_c$ and so we must have $c \in \FS_\infty$.
  
  In particular, $t \mapsto \Phi_t(c)$ is injective.  It follows that for $v \in V_c$
  there is a unique $t_v \in \IR$ with $vc=\Phi_{t_v}(c)$.   
  We obtain a group
  homomorphism $V_c \to \IR$, $v \mapsto t_v$ whose kernel is $K_c$.  Denote the image of
  this homomorphism by $\Gamma_c$.  We claim that $\Gamma_c$ is discrete.
  
  To prove this claim, suppose that there are $(v_n)_{n \in \IN}$ with $t_{v_n} \to 0$ as
  $n \to \infty$.  As $c \in \FS_\infty$, $c \colon \IR \to X$ is injective.  Now for any
  $s \in \IR$ we have $v_n(c(s)) = (v_n c)(s) = (\Phi_{t_n}(c)) = c(s + t_n) \to c(s)$,
   so $c(s)$ is an accumulation point of its $G$-orbit.
  But $G \curvearrowright X$ is smooth and proper, so orbits are discrete.  Thus
  $\Gamma_c$ is discrete.
  
  Thus $\tau_c = \min \Gamma_c \cap \IR_{>0}$ and there is $v \in V_c$ with
  $t_v = \tau_c$.  Moreover, $\Gamma_c$ is infinite cyclic and generated by $\tau_c$.
  Thus any $v \in V_c$ with $t_v = \tau_c$ will together with $K_c$ generate $V_c$.
\end{proof}


\typeout{----- Section 4: Triangle inequalities for $\fold{V}$ and $\fold{J}$ -----------}

\section{Triangle inequalities for $\fold{V}$ and
  $\fold{J}$}\label{sec:Triangle_inequality}

We have already proven a version of the triangle inequality for $\fold{\FS}$ on the flow space $\FS$
in Lemma~\ref{lem:sym_plus_triangle-fol}.  
We will need versions of the triangle inequality for $\fold{V}$ and $\fold{J}$ as well.

%

\begin{lemma}\label{lem:unif-continuity} \
  \begin{enumerate}[
                 label=(\thetheorem\alph*),
                 align=parleft, 
                 leftmargin=*,
                 labelindent=1pt,
                 ] 
  \item\label{lem:unif-continuity:M} Let $M \subseteq G$ be compact. For any
    $\epsilon > 0$ there is $\delta > 0$ such that for all $g,g' \in G$, $v \in M$ we have
    \begin{equation*}
      d_G(g,g') < \delta \implies d_G(gv,g'v) < \epsilon;
    \end{equation*}
  \item\label{lem:unif-continuity:V} Let $\alpha \geq 0$.  Then for any $\epsilon > 0$
    there is $\delta > 0$ such that for any closed subgroup $V$ of $G$ and
    $g,g',g'' \in G$ we have
    \begin{equation*}
      \fold{V}(g,g'),  \fold{V}(g',g'') \leq (\alpha,\delta) \quad \implies \quad  \fold{V}(g,g'') \leq (2\alpha,\epsilon).
    \end{equation*}
  \end{enumerate}
\end{lemma}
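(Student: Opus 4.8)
The plan is to prove~\ref{lem:unif-continuity:M} directly and then deduce~\ref{lem:unif-continuity:V} from it.

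For~\ref{lem:unif-continuity:M} I would first use left invariance of $d_G$ to rewrite everything at the identity: since $d_G(g,g') = d_G(e,g^{-1}g')$ and $d_G(gv,g'v) = d_G\bigl(e,v^{-1}(g^{-1}g')v\bigr)$, setting $h := g^{-1}g'$ reduces the claim to the following statement: for every $\epsilon > 0$ there is $\delta > 0$ such that $d_G(e,h) < \delta$ implies $d_G(e,v^{-1}hv) < \epsilon$ for all $h \in G$ and $v \in M$. Now the conjugation map $\phi \colon G \times M \to G$, $(h,v) \mapsto v^{-1}hv$, is continuous because multiplication and inversion in $G$ are continuous, and it satisfies $\phi(e,v) = e$ for every $v \in M$. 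Writing $B_\epsilon := \{ x \in G \mid d_G(e,x) < \epsilon \}$, the set $\phi^{-1}(B_\epsilon)$ is an open subset of $G \times M$ containing the compact slice $\{e\} \times M$, so by the tube lemma there is an open neighbourhood $U$ of $e$ in $G$ with $U \times M \subseteq \phi^{-1}(B_\epsilon)$. Since $d_G$ generates the topology of $G$, we may choose $\delta > 0$ with $B_\delta \subseteq U$; this $\delta$ does the job.

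For~\ref{lem:unif-continuity:V} the crucial point is that a witness $v \in V$ for $\fold{V}(g,g') \le (\alpha,\delta)$ automatically lies in the set $M := \{ h \in G \mid d_G(e,h) \le \alpha \}$, which is compact because $d_G$ is proper and which, importantly, does not depend on $V$ or on $g,g',g''$. So, given $\alpha \ge 0$ and $\epsilon > 0$, I would apply~\ref{lem:unif-continuity:M} to this $M$ and to $\epsilon/2$ to obtain some $\delta_1 > 0$, and then take any $\delta > 0$ with $\delta < \delta_1$ and $\delta \le \epsilon/2$. Suppose $\fold{V}(g,g') \le (\alpha,\delta)$ and $\fold{V}(g',g'') \le (\alpha,\delta)$, witnessed by $v,v' \in V$ with $d_G(e,v),d_G(e,v') \le \alpha$, $d_G(gv,g') \le \delta$ and $d_G(g'v',g'') \le \delta$. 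I would take $w := vv' \in V$. Left invariance and the triangle inequality give $d_G(e,w) \le d_G(e,v) + d_G(v,vv') = d_G(e,v) + d_G(e,v') \le 2\alpha$, and
\[
  d_G(gw,g'') \le d_G\bigl((gv)v',(g')v'\bigr) + d_G(g'v',g'') < \epsilon/2 + \epsilon/2 = \epsilon,
\]
where the first summand is $< \epsilon/2$ by~\ref{lem:unif-continuity:M} applied to $gv$, $g'$ and $v' \in M$, using $d_G(gv,g') \le \delta < \delta_1$. Hence $\fold{V}(g,g'') \le (2\alpha,\epsilon)$, as required.

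The argument is largely routine bookkeeping; the only things that need genuine care are the appeal to properness of $d_G$ to ensure $M$ is compact, the observation that $M$ in~\ref{lem:unif-continuity:V} is uniform in $V$ (which is precisely what makes the triangle inequality uniform over all closed subgroups), and the tube-lemma step with the attendant choice of $\delta$. I do not anticipate a real obstacle.
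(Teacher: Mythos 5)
Your proof is correct and follows essentially the same route as the paper's: reduce part~\ref{lem:unif-continuity:M} to the continuity of conjugation together with compactness of $M$, then deduce part~\ref{lem:unif-continuity:V} by applying~\ref{lem:unif-continuity:M} with $M$ the closed $\alpha$-ball, composing the two witnesses to $w = vv'$, and estimating via the triangle inequality. The only (cosmetic) difference is in part~\ref{lem:unif-continuity:M}, where the paper argues by contradiction with a convergent subsequence $v_n \to v$ in $M$, while you argue directly via the tube lemma; both are elementary consequences of the same compactness fact.
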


\begin{proof}
  Assume the first statement fails for given $M \subseteq G$ compact and $\epsilon > 0$.
  Then there are sequences $g_n,g'_n \in G$, $v_n \in M$ 
  with $d_G(g_n,g'_n) < 1/n$ and
  $d_G(g_nv_n,g'_nv_n) \geq \epsilon$.  Let $x_n := g_n^{-1}g'_n$.  Then
  $d_G(e,x_n) = d_G(g_n,g'_n) < 1/n$ and thus $\lim_{n \to \infty} x_n = e$.  By passing
  to a subsequence, we can arrange $\lim_{n \to \infty} v_n = v$ for some $v \in M$. This
  implies $\lim_{n \to \infty} v_n^{-1}x_nv_n = v^{-1}ev=e$.  Hence
  $\lim_{n \to \infty} d_G(g_nv_n,g'_nv_n) = \lim_{n \to \infty} d_G(e,v_n^{-1}x_nv_n) =
  0$, a contradiction.

  For the second statement, let $\alpha > 0$ and $\epsilon > 0$ be given.  Using the
  compactness of the closed $\alpha$-ball in $G$ we find,
  using~\ref{lem:unif-continuity:M}, $\delta > 0$ such that
  \begin{equation*}
    d_G(g,g') < \delta, d_G(v,e) < \alpha \implies d_G(gv,g'v) < \epsilon/2.
  \end{equation*}
  After decreasing $\delta$ we may assume $\delta < \epsilon / 2$.  Now if
  $\fold{V}(g,g'), \fold{V}(g',g'') \leq (\alpha,\delta)$ then there are $v,v' \in G$ with
  $d_G(gv,g'), d_G(g'v',g'') < \delta$, $d_G(v,e),d_G(v',e) < \alpha$.  Our choice of
  $\delta$ implies $d_G(gvv',g'v') < \epsilon / 2$.  Now
  $d_G(vv',e) \leq d_G(vv',v) + d_G(v,e) \leq 2\alpha$ and hence
  $d_G(gvv',g'') \leq d_G(gvv',g'v') + d_G(g'v',g'') < \epsilon/2 + \delta < \epsilon$.
  Thus $\fold{V}(g,g'') < (2\alpha,\epsilon)$.
\end{proof}

\begin{lemma}\label{lem:triangle-ineq-d_calv} Let $\calv \subseteq \CVCYC$ be finite.  Fix
  $\beta > 0$. Then for all $\eta' > 0$ there is $\eta > 0$ with the property that for
  every $\epsilon > 0$ and every $y,y',y'' $ in $J_\calv^N(G)^\wedge$ we have
  \begin{equation*}
    \fold{J}(y,y'), \fold{J}(y',y'') < (\beta,\eta,\epsilon)
    \quad \implies \quad \fold{J}(y,y'') < (2 \beta ,\eta',2 \epsilon).
  \end{equation*}	
\end{lemma}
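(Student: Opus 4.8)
The plan is to reduce the $\fold{J}$-triangle inequality to the $\fold{V}$-triangle inequality (Lemma~\ref{lem:unif-continuity}~\ref{lem:unif-continuity:V}) applied coordinate-by-coordinate, together with the elementary triangle inequality for the barycentric coordinates $t_i$. First I would fix $\beta > 0$ and $\eta' > 0$ and apply Lemma~\ref{lem:unif-continuity}~\ref{lem:unif-continuity:V} with $\alpha = \beta$ and with the target accuracy $\eta'$ to obtain a $\delta > 0$ with the property that $\fold{V}(g,g'),\fold{V}(g',g'') \leq (\beta,\delta)$ implies $\fold{V}(g,g'') \leq (2\beta,\eta')$ for \emph{every} closed subgroup $V$ of $G$ (this uniformity over $V$ is exactly what that lemma provides). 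I would then set $\eta := \delta$. Note that the finiteness of $\calv$ is not actually needed for the argument, only the uniform-over-$V$ triangle inequality; I would simply carry the hypothesis along as stated.

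Now suppose $y = [t_0(g_0,V_0),\ldots,t_N(g_N,V_N)]$, $y' = [t'_0(g'_0,V'_0),\ldots]$, $y'' = [t''_0(g''_0,V''_0),\ldots]$ satisfy $\fold{J}(y,y') < (\beta,\eta,\epsilon)$ and $\fold{J}(y',y'') < (\beta,\eta,\epsilon)$ for some $\epsilon > 0$. For the barycentric coordinates, $|t_i - t'_i| < \epsilon$ and $|t'_i - t''_i| < \epsilon$ give $|t_i - t''_i| < 2\epsilon$ for all $i$, which is the first requirement for $\fold{J}(y,y'') < (2\beta,\eta',2\epsilon)$. For the second requirement, fix an index $i$ with $\max\{t_i,t''_i\} \geq 2\epsilon$. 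The key observation is that then $t'_i \geq \epsilon$: indeed if $t_i \geq 2\epsilon$ then $t'_i > t_i - \epsilon \geq \epsilon$, and symmetrically if $t''_i \geq 2\epsilon$ then $t'_i > t''_i - \epsilon \geq \epsilon$. Consequently $\max\{t_i,t'_i\} \geq \epsilon$ and $\max\{t'_i,t''_i\} \geq \epsilon$, so the hypotheses $\fold{J}(y,y') < (\beta,\eta,\epsilon)$ and $\fold{J}(y',y'') < (\beta,\eta,\epsilon)$ force $V_i = V'_i = V''_i$ and $\fold{V_i}(g_i,g'_i) < (\beta,\eta)$, $\fold{V'_i}(g'_i,g''_i) < (\beta,\eta)$. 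Applying Lemma~\ref{lem:unif-continuity}~\ref{lem:unif-continuity:V} with $V = V_i$ (using $\eta = \delta$ and $\fold{V}(g,g') < (\beta,\eta)$ implies $\fold{V}(g,g') \leq (\beta,\eta)$) yields $V_i = V''_i$ and $\fold{V_i}(g_i,g''_i) \leq (2\beta,\eta')$, hence $< (2\beta,\eta')$ after a harmless relabeling (or one can ask for strict inequality directly by shrinking $\delta$). This is precisely the second requirement for $\fold{J}(y,y'') < (2\beta,\eta',2\epsilon)$.

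The only subtlety—and the step I would be most careful about—is the bookkeeping with the threshold $\epsilon$ versus $2\epsilon$: one must verify that indices which are "active" for the pair $(y,y'')$ at level $2\epsilon$ were already "active" for both pairs $(y,y')$ and $(y',y'')$ at level $\epsilon$, so that the $\fold{V}$-information is genuinely available to feed into Lemma~\ref{lem:unif-continuity}~\ref{lem:unif-continuity:V}. The computation $t'_i > \max\{t_i,t''_i\} - \epsilon \geq 2\epsilon - \epsilon = \epsilon$ above handles exactly this. Everything else is a direct concatenation of the already-established uniform triangle inequality for $\fold{V}$ and the definition of $\fold{J}$, so no further estimates are required.
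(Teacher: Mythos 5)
Your proof is correct and follows essentially the same route as the paper's: choose $\eta$ as the $\delta$ from Lemma~\ref{lem:unif-continuity}~\ref{lem:unif-continuity:V} with $\alpha=\beta$ and target $\eta'$, observe $|t_i-t''_i|<2\epsilon$, and for each index active at level $2\epsilon$ note that the intermediate coordinate $t'_i$ exceeds $\epsilon$ so both pairs are active at level $\epsilon$, then apply the uniform $\fold{V}$-triangle inequality coordinate-wise. Your aside that finiteness of $\calv$ is not actually used (only the uniformity over $V$ in Lemma~\ref{lem:unif-continuity}~\ref{lem:unif-continuity:V}) is accurate, and the $\leq$ vs.\ $<$ worry is harmless since the proof of that lemma actually produces a strict inequality.
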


\begin{proof}
  We use $\eta' := \delta'$ from~\ref{lem:unif-continuity:V} with $\delta := \eta$.
  Write
  \begin{eqnarray*}
    y & = & [t_0(g_0,V_0),\ldots,t_N(g_N,V_N)];
    \\
    y' & = & [t'_0(g'_0,V'_0),\ldots,t'_N(g'_N,V'_N)];
    \\
    y'' & = & [t''_0(g''_0,V''_0),\ldots,t''_N(g''_N,V''_N)].
  \end{eqnarray*}
  As $|t_i-t'_i|, |t'_i-t''_i| < \epsilon$ for all $i$, we have $|t_i-t''_i| < 2\epsilon$ for all $i$.
  
  Suppose that $\max \{t_i,t''_i\} \geq 2\epsilon$.  As
  $|t_i-t'_i|, |t'_i-t''_i| < \epsilon$, this implies both
  $\max \{t_i,t'_i\} \geq \epsilon$ and $\max \{t'_i,t''_i\} \geq \epsilon$.  Thus
  $V_i = V'_i = V''_i$ and $\fold{V_i}(g_i,g'_i) < (\beta,\eta)$ and
  $\fold{V_i}(g'_i,g''_i) < (\beta,\eta)$.  Now~\ref{lem:unif-continuity:V} gives
  $\fold{V_i}(g_i,g''_i) < (2\beta,\eta')$.
\end{proof}


\typeout{------------------- Section 5: Factorization over the flow space
  -----------------}

\section{Factorization over the flow space}\label{sec:factorization}

In this section we prove our main Theorem~\ref{thm:X-to-J_intro} modulo two results about
the flow space $\FS$.


\subsection{From $X$ to $\FS$}\label{sec:From_X_to_J}

\begin{theorem}\label{thm:X-to-FS}
  For all $M \subseteq G$ compact there is $\alpha > 0$ with the following property.  For
  all $\delta > 0$, $L > 0$ there exists $R > 0$ and a uniformly continuous map
  $f_0 \colon X \to \FS$ such that 
  \begin{enumerate}
  \item\label{thm:X-to-FS:G} for $x \in \ballinX{R+L}$, $g \in M$ we have
    $\fold{\FS}(f_0(gx),gf_0(x)) < (\alpha,\delta)$;
  \item\label{thm:X-to-FS:pi} for $x \in \ballinX{R+L}$, $R' \geq R$ we have
    $\fold{\FS}(f_0(x),f_0(\pi_{R'}(x))) < (\alpha,\delta)$, where $\pi_{R'}$ denotes the radial
    projection onto $\ballinX{R+L}$;
  \end{enumerate}
\end{theorem}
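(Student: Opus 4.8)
The plan is to build $f_0$ out of the radial geodesic structure of $X$ based at $b$, using the flow $\Phi$ on $\FS$ to absorb the difference between the genuine $G$-action and the radial homotopy action $x\mapsto\pi_R(x)$. Given $M$ I would first record two geometric constants: put $D:=\sup_{g\in M}d_X(b,gb)$, finite since $Mb$ is compact, and note that a $\CAT(0)$-convexity argument bounds the homotopy action defect $\sup_{g\in M,\,z\in X}d_X\bigl(g\pi_R(z),\pi_R(gz)\bigr)$ by a constant depending only on $M$ and \emph{not} on $R$, since it only compares radial projections onto the two balls $\ballinX{R}$ and $g\ballinX{R}$, whose bounding spheres are within Hausdorff distance $D$. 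Then $\alpha$ is taken to be $D$ plus this defect bound.

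Given $\delta,L>0$, I would define $f_0$ in three regimes according to $d_X(b,x)$, with radii $R_0<R_1<R$ fixed large at the very end. For $d_X(b,x)\le R_0$ let $f_0(x)$ be the constant generalized geodesic at $x$; this part is honestly $G$-equivariant. For $d_X(b,x)\ge R_1$ let $f_0(x)$ be the geodesic ray from $b$ through $x$ — using, for each direction, a fixed continuation past the endpoint, so that the ray through $\pi_{R'}(x)$ coincides with the ray through $x$ — reparametrised so that the marker point $\pi_R(x)$ sits at time $0$, constant equal to $b$ for $t\le-\min\{R,d_X(b,x)\}$, and truncated (constant at the far endpoint) at a fixed arc-length past $\pi_R(x)$, large depending only on $\delta,L$. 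For $R_0\le d_X(b,x)\le R_1$ one interpolates, truncating the ray through $x$ symmetrically about $x$ to an arc-length that grows from $0$ to a large value as $d_X(b,x)$ runs from $R_0$ to $R_1$, with $R_1-R_0$ taken so large that this arc-length varies extremely slowly. As the radial projection is $1$-Lipschitz, $f_0$ is uniformly continuous.

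With this set-up property~\ref{thm:X-to-FS:pi} is almost tautological on $\ballinX{R+L}$: for $d_X(b,x)\le R$ one has $\pi_{R'}(x)=x$ whenever $R'\ge R$; and for $R<d_X(b,x)\le R+L$ the point $\pi_{R'}(x)$ lies on $[b,x]$, so the ray through $\pi_{R'}(x)$ equals the ray through $x$, the marker $\pi_R(\pi_{R'}(x))=\pi_R(x)$ is unchanged, and the truncation is taken at the same arc-length past that marker, so $f_0(\pi_{R'}(x))=f_0(x)$ exactly (the truncation length being $\ge L$ ensures $x$ still lies inside the truncated piece). The real work is property~\ref{thm:X-to-FS:G}. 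Where $x$ and $gx$ both lie in the constant regime, equivariance is exact. In the ray regime, $gf_0(x)$ comes from the ray through $gx$ issuing from $gb$ while $f_0(gx)$ comes from the ray through $gx$ issuing from $b$; both pass through $gx$, and the dynamic input of \cite{Bartels-Lueck(2012CAT(0)flow)} — $\CAT(0)$ fellow-travelling — shows that the two rays subtend an angle $O(D/d_X(b,x))$ at $gx$, hence stay $O(sD/d_X(b,x))$ apart at arc-length $s$ from $gx$. Realigning their markers costs a flow of size $\le\alpha$, after which the exponentially weighted integral defining $d_\FS$ is $O(D^2/R)$ plus exponentially small truncation tails, so taking $R$ large compared with $D$, $L$ and $\log(1/\delta)$ gives $\fold{\FS}(f_0(gx),gf_0(x))<(\alpha,\delta)$.

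The hard part is the interpolation regime. It is forced on us because in general there is no $G$-equivariant choice of direction at the points of $X$, hence no $G$-equivariant non-constant section of the evaluation $\FS\to X$, $c\mapsto c(0)$; so the part of $f_0$ near $b$ must be the (equivariant) constant geodesics, and these have to be spliced to the ray-based maps far out. One must check that the splice is gradual enough to keep almost-equivariance: there $gf_0(x)$ and $f_0(gx)$ are truncated rays through $gx$ whose arc-lengths differ by at most $D$ times the Lipschitz constant of the interpolation profile; since this annulus was placed far from $b$ the same fellow-travelling estimate applies, the arc-length mismatch contributes a term that is made negligible by enlarging $R_1-R_0$, and for short arc-lengths both geodesics are in any case uniformly close to the constant geodesic at $gx$. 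The remaining, routine, points are to make all constants uniform over $\ballinX{R+L}$ and to patch the three regimes into a single uniformly continuous map.
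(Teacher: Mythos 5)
Your core idea is the same as the paper's: send $x$ to a radial geodesic from $b$, shifted by the flow so that time~$0$ sits deep along the geodesic, and use the time-component $\alpha$ of $\fold{\FS}$ to absorb the $\le d_X(b,gb)$ offset between $c_{gb,gx}$ and $c_{b,gx}$. But the specific way you set up the geodesics contains a genuine gap, and the extra apparatus (three regimes, interpolation, continuation past the endpoint, the ``homotopy-action defect'' folded into $\alpha$) is not only unnecessary but is the source of the gap.

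The problem is the ``fixed continuation past the endpoint.'' For property~\ref{thm:X-to-FS:G} you must compare $gf_0(x)$ and $f_0(gx)$. Past the common point $gx$, the first is $g$ applied to your chosen continuation for the direction $b\to x$, while the second is your chosen continuation for the direction $b\to gx$. These are two \emph{independent, unrelated} choices, and the $\CAT(0)$ fellow-travelling estimate you invoke does not apply to them. You argue that the two rays subtend a small angle at $gx$ and ``hence stay $O(sD/d_X(b,x))$ apart at arc-length $s$ from $gx$''; that is true on the $b$-side of $gx$ by convexity of the distance function, but on the far side it is simply false in the spaces this paper is about. In a tree or a Bruhat--Tits building -- the main example here, per Appendix~\ref{app:Bruhat-Tits} -- two geodesics which agree on $[b,gx]$ can leave $gx$ along different branches and diverge at unit speed, regardless of how small the incoming angle was. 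Since your marker $\pi_R(x)$ can coincide with $gx$ (for $x$ on or near the sphere of radius $R$), this divergence occurs at times near $0$, where the weight $e^{-|t|}$ in $d_\FS$ is $\approx 1$, so it contributes a term of order $1$, not $\delta$, and no choice of $R,R_0,R_1$ fixes this.

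The paper avoids this entirely by \emph{not} continuing past the endpoint: it uses the generalized geodesic $c_{b,x}$ which is constant equal to $x$ for $t\ge d_X(b,x)$, and sets $f_0(x):=\Phi_T(c_{b,x})$ for a single fixed $T$ (no regimes, no interpolation). Then $gf_0(x)=\Phi_T(c_{gb,gx})$ and $f_0(gx)=\Phi_T(c_{b,gx})$ both settle at $gx$ forever, at times differing by at most $d_X(b,gb)\le\alpha$, and the $\CAT(0)$ estimate (Lemma~\ref{lem:CAT0-estimate}) is only ever used on the $b$-side of the endpoint, where it is valid. Property~\ref{thm:X-to-FS:pi} likewise does not need exact equality: $c_{b,x}$ and $c_{b,\pi_{R'}(x)}$ agree up to time $R'$ and then one goes constant while the other continues; the discrepancy sits at times $\ge R'-T\ge\Delta$ and is killed by the exponential weight (Lemma~\ref{lem:pi_R'-estimate}). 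Once you replace the ray-with-continuation by the constant-past-$x$ generalized geodesic, the three regimes, the interpolation annulus, and the ``defect'' term in $\alpha$ all become unnecessary, and the proof collapses to the paper's.
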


\begin{remark}
  Using quantifiers the beginning of Theorem~\ref{thm:X-to-FS} reads as
  \begin{equation*}
    \forall M\; \exists \alpha\; \forall \delta,L\; \exists R,f_0 \; \text{such that}\ldots
  \end{equation*}
\end{remark}

The proof of Theorem~\ref{thm:X-to-FS} is given at the end of
Section~\ref{sec:proof-of-X-to-FS}.


\subsection{From $\FS$ to $|J^N_\calv(G)|^\wedge$}

\begin{theorem}\label{thm:FS-to-J} 
  Suppose that Assumption~\ref{assum:good-FS_0} holds.
  
  \noindent There is $N \in \IN$ such that for any $\alpha > 0$
  and any $\epsilon > 0$ there are $\beta > 0$ and $\calv \subseteq \CVCYC$ finite such that
  for any $\eta > 0$ there are $\delta > 0$, $f_1 \colon \FS \to J_\calv^N(G)^\wedge$, 
  satisfying the following properties.
  \begin{enumerate}
  \item\label{thm:FS-to-J:alpha-delta-to-beta-eta-eps} For $c,c' \in \FS$ with
    $\fold{\FS}(c,c') < (\alpha,\delta)$ we have
    $\fold{J}(f_1(c),f_1(c')) < (\beta,\eta,\epsilon)$;
  \item\label{thm:FS-to-J:equiv-up-to-beta-eta-eps} For $c \in \FS$, $g \in G$ we have
    $\fold{J}(f_1(gc),gf_1(c)) < (\beta,\eta,\epsilon)$.  
  \end{enumerate}
\end{theorem}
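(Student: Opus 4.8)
The plan is to factor $f_1$ through a \emph{long and thin} $G$-cover of $\FS$ with covirtually cyclic isotropy groups, in the spirit of the covers for flow spaces constructed in~\cite{Bartels-Lueck-Reich(2008cover)} and refined in~\cite{Kasprowski-Rueping(2017long-thin)}, but carried out for the action of the td-group $G$. First I would fix once and for all the integer $N \in \IN$ as the uniform bound on the dimension of such covers; the point of the Kasprowski--Rüping refinement is that this bound depends only on the covering dimension of $X$ and not on the flow-length nor on any period threshold. Then, given $\alpha>0$ and $\epsilon>0$, I would invoke the construction to produce a cover $\mathcal U$ of $\FS$ with the following features: $\mathcal U$ is $G$-invariant and splits into $N+1$ subfamilies each consisting of pairwise disjoint open sets; each $U\in\mathcal U$ is ``long'' in the flow direction on the region where its associated bump function is non-negligible, with flow-length larger than a constant chosen large relative to $\alpha/\epsilon$; each isotropy group $G_U$ lies in $\CVCYC$ (it is compact away from short periodic geodesics, and for $U$ meeting a periodic geodesic $c_0$ with $\tau_{c_0}$ below a chosen threshold $\ell$ one has $G_U\subseteq V_{c_0}$, which is covirtually cyclic by Lemma~\ref{lem:about-tau_c}); and $\mathcal U$ is ``thin'' transversally. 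Here Assumption~\ref{assum:good-FS_0} is the essential input: part~\ref{assum:good-FS_0:V} is exactly what forces the isotropy not to grow under small perturbations near a periodic geodesic, which both keeps $G_U\in\CVCYC$ and, together with cocompactness (Lemma~\ref{lem:G-on-FS-proper}), guarantees that only finitely many conjugacy classes of isotropy groups occur.

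Since $G$ acts cocompactly on $\FS$ and $\mathcal U$ is locally finite, there are only finitely many $G$-orbits of members of $\mathcal U$; I would pick orbit representatives $W_1,\dots,W_m$ together with precompact fundamental sets $F_j\subseteq W_j$ for the $G_{W_j}$-actions, and set $\calv:=\{G_{W_1},\dots,G_{W_m}\}\subseteq\CVCYC$. Next I fix a $G$-invariant partition of unity $\{\phi_U\}_{U\in\mathcal U}$ subordinate to $\mathcal U$ that is uniformly continuous for $d_\FS$ and varies slowly along flow lines (the latter being automatic from the ``long'' design). Given $c\in\FS$, let $U_0,\dots,U_N$ be the cover elements with $\phi_{U_i}(c)>0$, at most one in each subfamily, let $W_{j(i)}$ be the orbit representative of $U_i$, and choose $g_i\in G$ with $g_i^{-1}U_i=W_{j(i)}$ and $g_i^{-1}c\in F_{j(i)}$; such $g_i$ exists and is unique up to left multiplication by an element of $G_{W_{j(i)}}$ carrying a point of $F_{j(i)}$ back into $F_{j(i)}$. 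Define
\[
  f_1(c):=\bigl[\,\phi_{U_0}(c)\,(g_0,G_{W_{j(0)}}),\ \dots,\ \phi_{U_N}(c)\,(g_N,G_{W_{j(N)}})\,\bigr]\in J^N_\calv(G)^\wedge .
\]

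To verify~\ref{thm:FS-to-J:equiv-up-to-beta-eta-eps}, note that the cover elements meeting $gc$ are the $gU_i$; they lie in the same subfamilies and the same $G$-orbits as the $U_i$ (so the same labels $G_{W_{j(i)}}\in\calv$ appear), and $\phi_{gU_i}(gc)=\phi_{U_i}(c)$. Moreover $g\,g_i$ is a legal choice of representative for $gU_i$ at $gc$, so the actual representative differs from $g\,g_i$ by some $v_i\in G_{W_{j(i)}}$ with $v_iF_{j(i)}\cap F_{j(i)}\neq\emptyset$; since the $G$-action on $\FS$ is proper (Lemma~\ref{lem:G-on-FS-proper}) and $F_{j(i)}$ is precompact, the set of such $v_i$ is relatively compact, so $d_G(e,v_i)<\beta$ for a suitable $\beta$ depending only on $\mathcal U$, hence only on $\alpha$ and $\epsilon$. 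Using this $v_i$ in the definition of $\fold{G_{W_{j(i)}}}$ yields $\fold{J}(f_1(gc),g f_1(c))<(\beta,\eta,\epsilon)$ for every $\eta>0$ and every $g\in G$. For~\ref{thm:FS-to-J:alpha-delta-to-beta-eta-eps}, suppose $\fold{\FS}(c,c')<(\alpha,\delta)$; by the long, thin, slowly varying design (flow-length large relative to $\alpha/\epsilon$) the cover elements with $\phi$-value $\geq\epsilon$ at $c$ coincide with those at $c'$ and $|\phi_{U_i}(c)-\phi_{U_i}(c')|<\epsilon$ for all $i$, provided $\delta$ is small enough. For a shared such $U_i$, the chosen representatives $g_i,g_i'$ both conjugate $U_i$ to $W_{j(i)}$, and since $c'$ is obtained from $c$ by flowing a time in $[-\alpha,\alpha]$ followed by a $d_\FS$-move of size $<\delta$ and $d_\FS$ is $G$-invariant, $g_i^{-1}g_i'\in G_{W_{j(i)}}$ carries $(g_i')^{-1}c'\in F_{j(i)}$ into the $\delta$-neighborhood of $\Phi_{[-\alpha,\alpha]}(F_{j(i)})$; properness again bounds $d_G(e,g_i^{-1}g_i')$ by $\beta$, and shrinking $\delta$ further, now in terms of $\eta$, makes the residual $d_\FS$-correction translate into a $d_G$-correction $<\eta$, so $\fold{G_{W_{j(i)}}}(g_i,g_i')<(\beta,\eta)$. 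Hence $\fold{J}(f_1(c),f_1(c'))<(\beta,\eta,\epsilon)$.

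The main obstacle is the construction of the cover $\mathcal U$ itself: producing, for a td-group acting on $\FS$, a long and thin $G$-cover of uniformly bounded dimension whose isotropy groups are covirtually cyclic. This requires adapting the inductive schemes of~\cite{Bartels-Lueck-Reich(2008cover),Kasprowski-Rueping(2017long-thin)} to an action with compact open (rather than finite) point isotropy and with non-compact covirtually cyclic isotropy along periodic orbits, and it is precisely here that Assumption~\ref{assum:good-FS_0} enters, to control isotropy under perturbation and to secure finiteness of $\calv$. The reason the target is $J^N_\calv(G)^\wedge$ equipped with $\fold{J}$ rather than a genuine metric space is that for a non-compact closed subgroup $V$ there is no $G$-invariant metric on $G/V$; the estimates above are arranged so that the unavoidable failures of continuity and of exact equivariance are absorbed by the $\beta$- and $\eta$-slack built into $\fold{J}$.
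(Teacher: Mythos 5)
Your overall blueprint---assemble $f_1$ from a flow-adapted partition of unity subordinate to a long thin cover of bounded dimension, with each term recording a ``choice of $g\in G$'' and a group from $\calv$---matches the paper's, and the way you verify the two estimates at the end is structurally the right one. But the cover you posit cannot exist for a td-group, and this is not a technical hurdle that ``adapting the inductive schemes'' of~\cite{Bartels-Lueck-Reich(2008cover),Kasprowski-Rueping(2017long-thin)} will overcome. You want $G$ to permute a collection $\calu$ of open sets with $G_U\in\CVCYC$, the subfamilies pairwise disjoint, and a well-defined choice of representative $g_i$ with $g_i^{-1}U_i=W_{j(i)}$ and $g_i^{-1}c\in F_{j(i)}$. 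For that to pin down $g_i$ up to an element of $G_{W_{j(i)}}$ with bounded displacement, you need $\{g\in G\mid gU_i\cap U_i\neq\emptyset\}\subseteq G_{U_i}$. When $V=G_{U_i}$ is covirtually cyclic but \emph{not} open in $G$---precisely the case that occurs for periodic geodesics in the Bruhat--Tits setting---this is impossible: every neighborhood of $e$ in $G$ contains elements $g\notin V$, and any such $g$ close enough to $e$ satisfies $gU_i\cap U_i\neq\emptyset$ for an open $U_i$. This is exactly the obstruction the paper records in Remark~\ref{rem:discrete_versus_totally_disconnected}. Assumption~\ref{assum:good-FS_0} controls how isotropy varies near a fixed $c_0$; it does not make non-open subgroups open, so it cannot rescue the equivariant $\CVCYC$-cover you need.

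The paper's actual route sidesteps the nonexistent cover. The long thin cover $\calu$ of Proposition~\ref{prop:dimension} consists of $G$-\emph{saturated} open sets (sets with $GU=U$); they are produced by pulling back a non-equivariant long thin cover of the compact finite-dimensional orbit space $G\backslash\FS$ (Proposition~\ref{prop:long-thin-cover-non-equivariant}), so there is no isotropy bookkeeping on the cover side at all, and the partition of unity is likewise constructed on $G\backslash\FS$ and pulled back (Proposition~\ref{prop:partition-no-G}). The role you assign to ``choosing $g_i$ with $g_i^{-1}c\in F_{j(i)}$'' is instead played by Proposition~\ref{prop:local}: for each $c_0$ there is a $G$-saturated open $U\supseteq Gc_0$, a group $V\in\calv$, and a possibly discontinuous map $h\colon U\to G$ satisfying $\fold{V}(h(gc),gh(c))<(\beta,\eta)$ and $\fold{\FS}(c,c')<(\alpha'',\delta'')\Rightarrow\fold{V}(h(c),h(c'))<(\beta,\eta)$. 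The covirtually cyclic $V$ is attached to the orbit (via Assumption~\ref{assum:good-FS_0} together with Lemma~\ref{lem:isotropy-in-K} and Addendum~\ref{add:constr-h-non-periodic}), not realized as the literal isotropy of a cover element. So the two key objects you would need---an equivariant $\CVCYC$-cover, and an exact equivariant choice of coset representative on each piece---are replaced by a non-equivariant cover of $G\backslash\FS$ plus an almost-equivariant $h$ on saturated sets, and the $(\beta,\eta)$-slack in $\fold{J}$ is there precisely to absorb the failure of $h$ to be equivariant or continuous. Until you reorganize your argument along those lines, the step you label ``the main obstacle'' is a genuine gap, not merely deferred work.
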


\begin{remark}
  Using quantifiers the beginning of Theorem~\ref{thm:FS-to-J} reads as
  \begin{equation*}
    \exists N \; \forall  \alpha,\epsilon \; \exists \beta, \calv\;
    \forall \eta\; \exists \delta,f_1 \; \text{such that} \ldots
  \end{equation*}
\end{remark}

The proof of Theorem~\ref{thm:FS-to-J} (modulo three results proven later) is given at the
end of Section~\ref{sec:three-properties}.


\subsection{Proof of Main Theorem using Theorems~\ref{thm:X-to-FS}
  and~\ref{thm:FS-to-J}}%
\label{subsec:Proof_of_Theorem_ref(thm:X-to-J_intro_using_Theorems_ref(thm:X-to-FS))_and_ref(thm:FS-to-J)}

We restate our Main Theorem~\ref{thm:X-to-J_intro} from the introduction.

\begin{theorem*}[Main Theorem]
  Suppose that Assumption~\ref{assum:good-FS_0} holds.
  
  \noindent There is $N \in \IN$ such that for all $M \subseteq G$ compact and
  $\epsilon > 0$ there are $\beta > 0$ and $\calv \subseteq \CVCYC$ finite with the
  following property:  For all $\eta > 0$ and all $L > 0$ we find $R > 0$ and a (not
  necessarily continuous) map $f \colon X \to J_\calv^N(G)^\wedge$ satisfying:
  \begin{enumerate}
  \item for $x \in \ballinX{R+L}$, $g \in M$ we have
    $\fold{J}(f(gx),gf(x)) < (\beta,\eta,\epsilon)$;
  \item for $x \in \ballinX{R+L}$, $R' \geq R$ we have
    $\fold{J}(f(x),f(\pi_{R'}(x))) < (\beta,\eta,\epsilon)$;
  \item there is $\rho > 0$ such that for all
    $x,x' \in X$ with $d_X(x,x') < \rho$ we have
    $\fold{J}(f(x),f(x')) < (\beta,\eta,\epsilon)$.
  \end{enumerate}
\end{theorem*}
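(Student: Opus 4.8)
The plan is to build $f$ as the composition $X \xrightarrow{f_0} \FS \xrightarrow{f_1} J^N_\calv(G)^\wedge$ provided by Theorems~\ref{thm:X-to-FS} and~\ref{thm:FS-to-J}, as announced in Remark~\ref{rem:strategy_of_proof_of_main_theorem}; the whole task is to line up the quantifiers and absorb the constants that the foliated triangle inequality costs. I would take $N$ to be the integer supplied by Theorem~\ref{thm:FS-to-J}. Given $M \subseteq G$ compact and $\epsilon > 0$, the next step is to run Theorem~\ref{thm:X-to-FS} on $M$ to obtain $\alpha > 0$, and then run Theorem~\ref{thm:FS-to-J} on $\alpha$ and on $\epsilon_0 := \epsilon/2$ to obtain $\beta_0 > 0$ and a finite $\calv \subseteq \CVCYC$. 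These $\calv$, together with $\beta := 2\beta_0$, are the data to return for the Main Theorem.

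With $\eta > 0$ and $L > 0$ now given, I would invoke Lemma~\ref{lem:triangle-ineq-d_calv} for $\calv$, for the constant $\beta_0$, and for the target tolerance $\eta$ to produce $\eta_0 > 0$; shrinking it if necessary we may also assume $\eta_0 \leq \eta$. Then for every $\epsilon' > 0$ and all $y,y',y'' \in J^N_\calv(G)^\wedge$,
\[
  \fold{J}(y,y'),\ \fold{J}(y',y'') < (\beta_0,\eta_0,\epsilon')\ \implies\ \fold{J}(y,y'') < (2\beta_0,\eta,2\epsilon').
\]
Feed $\eta_0$ into Theorem~\ref{thm:FS-to-J} to get $\delta > 0$ and a map $f_1 \colon \FS \to J^N_\calv(G)^\wedge$ satisfying~\ref{thm:FS-to-J:alpha-delta-to-beta-eta-eps} and~\ref{thm:FS-to-J:equiv-up-to-beta-eta-eps} at the constants $(\beta_0,\eta_0,\epsilon_0)$; then feed this $\delta$ together with $L$ into Theorem~\ref{thm:X-to-FS} to get $R > 0$ and a uniformly continuous $f_0 \colon X \to \FS$ satisfying~\ref{thm:X-to-FS:G} and~\ref{thm:X-to-FS:pi} at the constants $(\alpha,\delta)$. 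The candidate is $f := f_1 \circ f_0$.

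It then remains to verify the three conclusions, using freely that $(\beta_0,\eta_0,\epsilon_0)$-closeness in $\fold{J}$ implies $(\beta,\eta,\epsilon)$-closeness, by monotonicity of $\fold{J}$ in each of its three slots together with $\beta_0 \leq \beta$, $\eta_0 \leq \eta$, $\epsilon_0 \leq \epsilon$. For~\ref{thm:X-to-J_intro:G-equiv}, given $x \in \ballinX{R+L}$ and $g \in M$, I would chain Theorem~\ref{thm:X-to-FS}~\ref{thm:X-to-FS:G} (which gives $\fold{\FS}(f_0(gx),gf_0(x)) < (\alpha,\delta)$) with Theorem~\ref{thm:FS-to-J}~\ref{thm:FS-to-J:alpha-delta-to-beta-eta-eps} to reach $f_1(gf_0(x))$, use Theorem~\ref{thm:FS-to-J}~\ref{thm:FS-to-J:equiv-up-to-beta-eta-eps} to pass from $f_1(gf_0(x))$ to $gf_1(f_0(x))$, and then apply the displayed triangle inequality to land at $\fold{J}(f(gx),gf(x)) < (2\beta_0,\eta,2\epsilon_0) = (\beta,\eta,\epsilon)$. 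Conclusion~\ref{thm:X-to-J_intro:pi_t} is more direct: Theorem~\ref{thm:X-to-FS}~\ref{thm:X-to-FS:pi} gives $\fold{\FS}(f_0(x),f_0(\pi_{R'}(x))) < (\alpha,\delta)$ for $x \in \ballinX{R+L}$, $R' \geq R$, and Theorem~\ref{thm:FS-to-J}~\ref{thm:FS-to-J:alpha-delta-to-beta-eta-eps} then gives $\fold{J}(f(x),f(\pi_{R'}(x))) < (\beta_0,\eta_0,\epsilon_0)$. For~\ref{thm:X-to-J_intro:rho-cont}, uniform continuity of $f_0$ lets me choose $\rho > 0$ with $d_X(x,x') < \rho \implies d_\FS(f_0(x),f_0(x')) < \delta$; then $\fold{\FS}(f_0(x),f_0(x')) < (\alpha,\delta)$ (flow parameter $0$), and Theorem~\ref{thm:FS-to-J}~\ref{thm:FS-to-J:alpha-delta-to-beta-eta-eps} once more gives $\fold{J}(f(x),f(x')) < (\beta_0,\eta_0,\epsilon_0)$.

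I do not expect a genuine obstacle here: the two substantive constructions are quarantined inside Theorems~\ref{thm:X-to-FS} and~\ref{thm:FS-to-J}, and what remains is bookkeeping. The single point needing care is that Lemma~\ref{lem:triangle-ineq-d_calv} doubles both the $\beta$-slot and the $\epsilon$-slot, which is precisely why one should build in $\epsilon_0 = \epsilon/2$ and $\beta = 2\beta_0$ from the start and arrange $\eta_0 \leq \eta$; with those choices everything closes up.
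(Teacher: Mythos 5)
Your proposal is correct and follows essentially the same route as the paper: both take $N$ from Theorem~\ref{thm:FS-to-J}, $\alpha$ from Theorem~\ref{thm:X-to-FS}, pre-halve $\beta$ and $\epsilon$ so that Lemma~\ref{lem:triangle-ineq-d_calv} lands exactly on $(\beta,\eta,\epsilon)$, and only use the triangle inequality for conclusion~(i), relying on monotonicity of $\fold{J}$ in its three slots (plus $\eta_0\le\eta$) for (ii) and (iii). The only differences are cosmetic: the paper names the output of Theorem~\ref{thm:FS-to-J} directly as $\beta/2$, where you call it $\beta_0$ and set $\beta=2\beta_0$.
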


\begin{proof}[Proof of Main Theorem]
  Let $N$ be the number from Theorem~\ref{thm:FS-to-J}.  Let $M \subseteq G$ be compact
  and $\epsilon > 0$.  Theorem~\ref{thm:X-to-FS} gives us a number $\alpha > 0$.
  Theorem~\ref{thm:FS-to-J} gives us $\beta/2 > 0$ and $\calv \subseteq \CVCYC$ finite.  Let
  $\eta > 0$ be given.  Because of Lemma~\ref{lem:triangle-ineq-d_calv} we can find
  $0 < \eta_0 \le \eta$ with the property that every $y,y',y'' $ in $J_\calf^N(G)^\wedge$
  we have
  \refstepcounter{theorem}
  \begin{enumerate}[label=(\thetheorem\alph*),leftmargin=*]
  \item\label{concl_from_Lemma_ref(lem_triangle-ineq-d_calv)}
    $\fold{J}(y,y'), \fold{J}(y',y'') < (\beta/2,\eta_0,\epsilon/2)
     \implies \fold{J}(y,y'') < (\beta ,\eta, \epsilon)$.
  \end{enumerate}
  Theorem~\ref{thm:FS-to-J} gives us $\delta > 0$ and
  $f_1 \colon \FS \to J^N_\calv(G)^\wedge$ satisfying
  \begin{enumerate}[label=(\thetheorem\alph*),leftmargin=*,resume]
  \item\label{nl:alpha-delta-to-beta-eta-eps} if $c,c' \in \FS$ fullfils
    $\fold{\FS}(c,c') < (\alpha,\delta)$, then  we have
    $\fold{J}(f_1(c),f_1(c')) < (\beta/2,\eta_0,\epsilon/2)$;
  \item\label{nl:G-equiv-up-to-beta-eta-eps} for $c \in \FS$, $g \in G$ we have
    $\fold{J}(f_1(gc),gf_1(c)) < (\beta/2,\eta_0,\epsilon/2)$.
  \end{enumerate}
  Let $L > 0$ be given.  Theorem~\ref{thm:X-to-FS} gives us $R > 0$ and
  $f_0 \colon X \to \FS$ uniformly continuous satisfying
  \begin{enumerate}[label=(\thetheorem\alph*),leftmargin=*,resume]
  \item\label{nl:G-equiv-up-to-alpha-delta} for $x \in \ballinX{R+L}$, $g \in M$
    we have $\fold{\FS}(f_0(gx),gf_0(x)) < (\alpha,\delta)$;
  \item\label{nl:pi_t-alpha-delta-small} for $x \in \ballinX{R+L}$, $R \leq t$ we have
    $\fold{\FS}(f_0(x),f_0(\pi_t(x))) < (\alpha,\delta)$.
  \end{enumerate}
  We set now $f := f_1 \circ f_0$.  It remains to verify the three assertions from
  Theorem~\ref{thm:X-to-J_intro}.  
  \\[1ex]~\ref{thm:X-to-J_intro:G-equiv} Let
  $x \in \ballinX{R+L}$, $g \in M$.  Then $\fold{\FS}(f_0(gx),gf_0(x)) < (\alpha,\delta)$
  by~\ref{nl:G-equiv-up-to-alpha-delta}.  Thus~\ref{nl:alpha-delta-to-beta-eta-eps}
  implies $\fold{J}(f_1(f_0(gx)),f_1(gf_0(x))) < (\beta/2,\eta_0,\epsilon/2)$.  On
  the other hand~\ref{nl:G-equiv-up-to-beta-eta-eps} implies
  $\fold{J}\bigl(f_1(gf_0(x)),gf_1(f_0(x))\bigr) < (\beta/2,\eta_0,\epsilon/2)$.
  Now we conclude
  \[\fold{J}(f(gx),gf(x)) = \fold{J}\bigl(f_1(f_0(gx)),gf_1(f_0(x))\bigr) <
    (\beta,\eta,\epsilon)
  \]
  from~\ref{concl_from_Lemma_ref(lem_triangle-ineq-d_calv)}.
  \\[1ex]~\ref{thm:X-to-J_intro:pi_t} 
  Let $x \in \ballinX{R+L}$, $R' \geq R$.  Then
  $\fold{\FS}(f_0(x),f_0(\pi_{R'}(x))) < (\alpha,\delta)$
  by~\ref{nl:pi_t-alpha-delta-small}.  We conclude
  \begin{multline*}
    \fold{J}(f(x),f(\pi_{R'}(x))) =
    \fold{J}(f_1(f_0(x)),f_1(f_0(\pi_{R'}(x))))
    \\
    < (\beta/2,\eta_0,\epsilon/2) \le (\beta,\eta,\epsilon)
  \end{multline*}
  from~\ref{nl:alpha-delta-to-beta-eta-eps}.  
  \\[1ex]~\ref{thm:X-to-J_intro:rho-cont}
  Since $f_0$ is uniformly continuous there is $\rho > 0$ such that
  $d_\FS(f_0(x),f_0(x')) < \delta$ (and in particular
  $\fold{\FS}(f_0(x),f_0(x')) < (\alpha,\delta)$) for all $x,x' \in X$ with
  $d_X(x,x') < \rho$.  Using~\ref{nl:alpha-delta-to-beta-eta-eps} we obtain
  \[
    \fold{J}(f(x),f'(x)) < (\beta/2,\eta_0,\epsilon/2) \le (\beta,\eta,\epsilon)
  \]
  for all $x,x' \in X$ with $d_X(x,x') < \rho$.
\end{proof}


\typeout{------------------- Section 5: The map to the flow space -----------------}

\section{The map to the flow space}\label{sec:proof-of-X-to-FS}

The proof of Theorem~\ref{thm:X-to-FS}, given in this section, will follow
closely arguments from similar results for actions of discrete
groups in~\cite{Bartels-Lueck(2012CAT(0)flow)}.

For $x,x' \in X$ we write $c_{x,x'} \in \FS$ for the generalized geodesic from $x$ to
$x'$, i.e., for the generalized geodesic characterized by
\begin{eqnarray*}
  c_{x,x'}(t) & = & x \quad t \in (-\infty,0], \\
  c_{x,x'}(t) & = & x \quad t \in [d(x,x'),+\infty). 
\end{eqnarray*}

Recall that we fixed a base point $b$ and write $\ballinX{R}$ for the closed ball of radius $R$
around $b$ in $X$.  Recall also that $\pi_R \colon X \to \ballinX{R}$ denotes the radial
projection.

\begin{lemma}\label{lem:unif-cont-x-to-c}
  The map $X \to \FS$, $x \mapsto c_{b,x}$ is uniformly continuous.
\end{lemma}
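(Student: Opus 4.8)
The plan is to prove the stronger statement that $x \mapsto c_{b,x}$ is Lipschitz, with a universal Lipschitz constant. First I would fix $x,x' \in X$ and establish a pointwise bound of the form
\[
  d_X\bigl(c_{b,x}(t),c_{b,x'}(t)\bigr) \le 3\, d_X(x,x') \qquad \text{for all } t \in \IR .
\]
Granting this, the definition of $d_\FS$ together with the normalization $\int_\IR (2e^{|t|})^{-1}\,dt = 1$ gives
\[
  d_\FS(c_{b,x},c_{b,x'}) = \int_\IR \frac{d_X\bigl(c_{b,x}(t),c_{b,x'}(t)\bigr)}{2e^{|t|}}\,dt \le 3\, d_X(x,x'),
\]
so the map is $3$-Lipschitz, hence uniformly continuous (given $\epsilon > 0$ one takes $\delta := \epsilon/3$).

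For the pointwise bound I would write $r := d_X(b,x)$, $r' := d_X(b,x')$, assume without loss of generality $r \le r'$ (so that $|r-r'| \le d_X(x,x')$ by the triangle inequality), and let $\gamma\colon [0,r] \to X$, $\gamma'\colon [0,r'] \to X$ be the unit-speed geodesics from $b$ to $x$ and from $b$ to $x'$. Then I would split $\IR$ into four ranges. On $(-\infty,0]$ both generalized geodesics are constant equal to $b$, and on $[r',\infty)$ they are constant equal to $x$ and $x'$, giving pointwise distances $0$ and $d_X(x,x')$. On $[0,r]$ one has $c_{b,x}(t)=\gamma(t)$ and $c_{b,x'}(t)=\gamma'(t)$; here I would use that $X$ is $\CAT(0)$, so $s \mapsto d_X(\gamma(sr),\gamma'(sr))$ is convex on $[0,1]$ and vanishes at $s=0$, whence $d_X(\gamma(t),\gamma'(t)) \le \frac{t}{r}\,d_X(x,\gamma'(r))$; and $d_X(x,\gamma'(r)) \le d_X(x,x') + d_X(x',\gamma'(r)) = d_X(x,x') + (r'-r) \le 2\,d_X(x,x')$. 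On $[r,r']$ one has $c_{b,x}(t)=x$ and $c_{b,x'}(t)=\gamma'(t)$, so $d_X(x,\gamma'(t)) \le d_X(x,\gamma'(r)) + (t-r) \le 2\,d_X(x,x') + (r'-r) \le 3\,d_X(x,x')$. Combining the four cases gives the displayed pointwise bound.

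The only ingredient beyond elementary bookkeeping is the convexity of the metric of a $\CAT(0)$-space along pairs of affinely reparametrized geodesics issuing from a common point, which is standard; so I do not expect a genuine obstacle. The one thing to be careful about is organizing the case analysis so that the constant (here $3$, though any absolute constant would suffice) is uniform in $t$, $x$ and $x'$, and recalling that the weight $(2e^{|t|})^{-1}$ integrates to $1$ over $\IR$, which is what turns the pointwise estimate into the estimate for $d_\FS$.
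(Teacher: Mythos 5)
Your proof is correct and takes a genuinely different route from the paper. The paper establishes continuity of $x \mapsto c_{b,x}$ by citing Bridson--Haefliger's continuous dependence of geodesics on endpoints together with the fact that the metric on $\FS$ generates the topology of uniform convergence on compacta, and then upgrades continuity to \emph{uniform} continuity by invoking $G$-equivariance of the two-variable map $(x,x') \mapsto c_{x,x'}$ and cocompactness of the diagonal $G$-action on $X \times X$. You instead give a direct pointwise estimate showing that $x \mapsto c_{b,x}$ is $3$-Lipschitz. Your argument is more elementary, avoids the group action entirely (so it applies to any complete $\CAT(0)$-space with a chosen basepoint, with no properness or cocompactness hypothesis), and yields the strictly stronger Lipschitz conclusion; the paper's argument is shorter given the machinery already in place but is less self-contained. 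The paper itself remarks parenthetically that "uniform continuity can also be checked directly without using the $G$-action," which is precisely what you carry out. Your case analysis and the convexity input (Bridson--Haefliger~II.2.2, convexity of $s \mapsto d(\gamma(s),\gamma'(s))$ for affinely reparametrized geodesics sharing an endpoint) are correct; the only thing I would add for completeness is the trivial observation that the degenerate case $r=0$ (i.e.\ $x=b$) is handled separately, since there the reparametrization $t/r$ in your third case is not defined, though the bound is then immediate.
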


\begin{proof}
  The map is continuous by~\cite[II.1.4~(1) on page~160]{Bridson-Haefliger(1999)}
  and~\cite[Proposition~1.7]{Bartels-Lueck(2012CAT(0)flow)}.  As it is equivariant for the
  cocompact actions of $G$, it is also uniformly continuous.  (Alternatively, uniform
  continuity can also be checked directly without using the $G$-action.)
\end{proof}

\begin{lemma}\label{lem:pi_R'-estimate}
  For all $\delta > 0$ there is $\Delta > 0$ such that for all $R',T$ with
  $R' \geq T + \Delta$, $x \in X$ we have
  \begin{equation*}
    d_{\FS} (\Phi_T(c_{b,x}), \Phi_T(c_{b,\pi_{R'}(x)})) < \delta.
  \end{equation*}  	
\end{lemma}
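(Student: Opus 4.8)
The plan is to compare the two generalized geodesics $c_{b,x}$ and $c_{b,\pi_{R'}(x)}$ directly, using that $\pi_{R'}(x)$ lies on the geodesic segment from $b$ to $x$ whenever $d_X(b,x) \geq R'$ (and equals $x$ otherwise), so that the two geodesics agree on a long initial segment and only differ far out. First I would dispose of the trivial case $d_X(b,x) \leq R'$, where $\pi_{R'}(x) = x$ and the left-hand side is $0$. So assume $d_X(b,x) > R'$; then by the defining property of the radial projection in a $\CAT(0)$-space, $\pi_{R'}(x)$ is the point on the geodesic $[b,x]$ at distance $R'$ from $b$, and consequently $c_{b,x}(s) = c_{b,\pi_{R'}(x)}(s)$ for all $s \in (-\infty, R']$, while for $s \geq R'$ both geodesics move apart at unit speed relative to their common value $c_{b,x}(R') = \pi_{R'}(x)$, giving the crude bound $d_X\bigl(c_{b,x}(s), c_{b,\pi_{R'}(x)}(s)\bigr) \leq 2(s - R')$ for $s \geq R'$ (and $=0$ for $s \leq R'$).

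Next I would plug this into the definition of $d_\FS$ after applying the flow $\Phi_T$: writing $c := c_{b,x}$, $c' := c_{b,\pi_{R'}(x)}$, we have $(\Phi_T c)(t) = c(t+T)$ and $(\Phi_T c')(t) = c'(t+T)$, so
\begin{equation*}
  d_\FS\bigl(\Phi_T(c), \Phi_T(c')\bigr) = \int_\IR \frac{d_X\bigl(c(t+T), c'(t+T)\bigr)}{2e^{|t|}}\, dt.
\end{equation*}
The integrand vanishes unless $t + T > R'$, i.e.\ unless $t > R' - T \geq \Delta$ (using the hypothesis $R' \geq T + \Delta$), and there it is bounded by $2(t+T-R')/(2e^{|t|}) = (t+T-R') e^{-t} \leq (t - \Delta) e^{-t}$ for $t > \Delta > 0$ (where $|t| = t$). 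Hence
\begin{equation*}
  d_\FS\bigl(\Phi_T(c), \Phi_T(c')\bigr) \leq \int_\Delta^\infty (t - \Delta) e^{-t}\, dt = e^{-\Delta}.
\end{equation*}
So it suffices to choose $\Delta > 0$ with $e^{-\Delta} < \delta$, i.e.\ $\Delta := -\log \delta$ works (or $\Delta := \max\{1, -\log\delta\}$ to be safe), and this $\Delta$ depends only on $\delta$, as required.

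I do not expect a serious obstacle here; the one point demanding a little care is the geometric input that $\pi_{R'}(x)$ sits on the segment $[b,x]$ and that therefore the two generalized geodesics share the initial segment up to parameter $R'$ — this is exactly the definition of the radial projection in a $\CAT(0)$-space together with the (unique) geodesic extension/restriction behaviour, and it is what makes the difference $d_X(c(s),c'(s))$ vanish for $s \leq R'$ rather than merely being controlled. The remaining estimate is the routine computation of $\int_\Delta^\infty (t-\Delta)e^{-t}\,dt = e^{-\Delta}$. Note also that the bound is uniform in $x$ and in $T$ (it only uses $R' - T \geq \Delta$), which is what the statement asks for.
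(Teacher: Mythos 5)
Your proof is correct and follows essentially the same route as the paper: observe that $c_{b,x}$ and $c_{b,\pi_{R'}(x)}$ agree on $(-\infty,R']$ and differ at most linearly beyond, then integrate the exponentially decaying tail against the $d_\FS$ kernel and choose $\Delta$ large enough. Two trivial remarks: the factor $2$ in your bound $d_X\bigl(c(s),c'(s)\bigr)\le 2(s-R')$ is unnecessary (the second geodesic is constant for $s\ge R'$, so $s-R'$ already suffices, as in the paper), and $\Delta=-\log\delta$ gives $e^{-\Delta}=\delta$ rather than $<\delta$, so one should take $\Delta$ strictly larger — neither affects the argument.
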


\begin{proof}
  Choose $\Delta > 0 $ such that $\int_{\Delta}^{\infty}\frac{s}{2e^{|s|}}ds < \delta$
  holds.  Let $x \in X$.  For $s+T \leq R'$ we have
  \begin{equation*}
    \big(\Phi_T(c_{b,x})\big)(s) = c_{b,x}(s+T) = c_{b,\pi_{R'}(x)}(s+T) = \big(\Phi_T(c_{b,\pi_{R'}(x)})\big)(s),
  \end{equation*}
  while for $s+T \geq R'$ we have
  \begin{align*}
    d_X\big(\big(\Phi_T(c_{b,x})&\big)(s),  \big(\Phi_T(c_{b,\pi_{R'}(x)})\big)(s)\big)
    \\ = & \; d_X\big(c_{b,x}(s+T),c_{b,\pi_{R'}(x)}(s+T)\big)
                          = s + T - R' = s - (R'-T).
  \end{align*}
  Thus, for $R' \geq T + \Delta$,
  \begin{align*}
    d_{\FS} (\Phi_T(c_{b,x}), & \Phi_T(c_{b,\pi_{R'}(x)})) \\
    & = \int_{-\infty}^{\infty}\frac{d_X\big(\big(\Phi_T(c_{b,x})\big)(s), \big(\Phi_T(c_{b,\pi_{R'}(x)})\big)(s)\big)}{2e^{|s|}}ds \\
    & = \int_{R'-T}^{\infty} \frac{s - (R' - T)}{2e^{|s|}}ds \\
    & \leq  \int_{\Delta}^{\infty}\frac{s}{2e^{|s|}}ds\\
   & \le \delta. \qedhere
  \end{align*} 
\end{proof}

\begin{lemma}\label{lem:CAT0-estimate}
  Let $r'$, $L$, $\alpha > 0$, $r'' > \alpha$.  Let $T := r' +r''$,
  $R := r'' + 2r' + \alpha$.  Let $x,x_1,x_2 \in X$ with $d_X(x_1,x_2) \leq \alpha$,
  $d_X(x_1,x) \leq R+L$.  Set $\tau := d_X(x,x_1)-d_X(x,x_2)$.  Then for all
  $t \in [-r',r']$
  \begin{equation*}
    d_X(c_{x_1,x}(T+\tau+t),c_{x_2,x}(T+t)) \leq \frac{2\alpha(L+2r'+2\alpha)}{r''}.
  \end{equation*}	
\end{lemma}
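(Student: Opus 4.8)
The plan is to exploit the fact that the shift by $\tau$ is engineered precisely so that the two sampled points $c_{x_1,x}(T+\tau+t)$ and $c_{x_2,x}(T+t)$ lie at the \emph{same} distance from $x$ along their respective geodesics toward $x$; once this is set up, the estimate follows from convexity of the $\CAT(0)$-metric together with a correction term accounting for the fact that $[x,x_1]$ and $[x,x_2]$ have slightly different lengths.

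First I would do the bookkeeping. Write $\ell_1:=d_X(x,x_1)$, $\ell_2:=d_X(x,x_2)$, so that $\tau=\ell_1-\ell_2$ and $|\tau|\le d_X(x_1,x_2)\le\alpha$. Since $t\ge -r'$ one has $T+t=r'+r''+t\ge r''$, hence $T+\tau+t\ge r''-\alpha>0$; thus neither parameter lands in the initial constant part of its generalized geodesic. The identity $\ell_1-\tau=\ell_2$ gives the equivalence $d_X(x_1,x)\le T+\tau+t \iff d_X(x_2,x)\le T+t$, so either both sampled points equal $x$ (and there is nothing to prove), or $\ell:=\ell_1-(T+\tau+t)=\ell_2-(T+t)$ is a positive number with $\ell<\min\{\ell_1,\ell_2\}$ and the points $p:=c_{x_1,x}(T+\tau+t)$, $q:=c_{x_2,x}(T+t)$ are genuinely the points of $[x_1,x]$, resp.\ $[x_2,x]$, at distance $\ell$ from $x$.

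Next comes the geometric core. Assuming without loss of generality $\ell_1\le\ell_2$, write $\sigma_i\colon[0,1]\to X$ for the affinely reparametrized geodesic from $x$ to $x_i$, so $p=\sigma_1(\ell/\ell_1)$ and $q=\sigma_2(\ell/\ell_2)$. Convexity of the metric in a $\CAT(0)$-space (see~\cite{Bridson-Haefliger(1999)}), applied to $\sigma_1,\sigma_2$, gives $d_X(\sigma_1(\theta),\sigma_2(\theta))\le\theta\, d_X(x_1,x_2)$ for $\theta\in[0,1]$; applying this at $\theta=\ell/\ell_1$ and then moving along $[x,x_2]$ from $\sigma_2(\ell/\ell_1)$ to $q=\sigma_2(\ell/\ell_2)$ (a distance of $\ell\,|\ell_2-\ell_1|/\ell_1=\ell\,|\tau|/\ell_1$) yields, via the triangle inequality,
\[ d_X(p,q)\ \le\ \frac{\ell}{\ell_1}\,d_X(x_1,x_2)+\frac{\ell\,|\tau|}{\ell_1}\ \le\ \frac{2\alpha\,\ell}{\min\{\ell_1,\ell_2\}}. \]
Finally I would plug in the prescribed constants: $\min\{\ell_1,\ell_2\}=\ell+\min\{T+\tau+t,\,T+t\}\ge\ell+r''-\alpha$, while $\ell=\ell_1-(T+\tau+t)\le(R+L)-(r''-\alpha)=L+2r'+2\alpha=:D$. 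Since $s\mapsto s/(s+r''-\alpha)$ is increasing and $D\ge\alpha$, one gets $\ell/\min\{\ell_1,\ell_2\}\le D/(D+r''-\alpha)\le D/r''$, whence $d_X(p,q)\le 2\alpha D/r''=2\alpha(L+2r'+2\alpha)/r''$, as required.

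The main obstacle is not the geometry — that is routine $\CAT(0)$-convexity — but the bookkeeping: one must carefully verify that the prescribed $T$, $R$ and the shift $\tau$ make the two samples land at equal distance from $x$ and enter the constant parts of the generalized geodesics simultaneously, and then that the length-mismatch correction term, combined with the chosen value of $R$, makes the denominator of the final bound come out as exactly $r''$ rather than $r''-\alpha$.
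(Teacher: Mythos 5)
Your proof is correct. The paper does not give its own argument here; it simply cites \cite[Lemma~3.6(i)]{Bartels-Lueck(2012CAT(0)flow)} and notes in a footnote that the cited bound is $\tfrac{2\alpha(L+2r'+\alpha)}{r''}$ and that the lack of symmetry in $x_1,x_2$ is absorbed by taking $L' := L+\alpha$. Your derivation reconstitutes the content of that citation directly from $\CAT(0)$-convexity: the shift by $\tau=\ell_1-\ell_2$ aligns the two sample points at a common remaining parameter $\ell=\ell_1-(T+\tau+t)=\ell_2-(T+t)$ from $x$, so convexity of $\theta\mapsto d_X(\sigma_1(\theta),\sigma_2(\theta))$ for geodesics issuing from $x$, together with a reparametrisation correction, gives $d_X(p,q)\le 2\alpha\ell/\ell_1$; the bookkeeping $\ell\le L+2r'+2\alpha$ (from $\ell_1\le R+L$ and $T+\tau+t\ge r''-\alpha$), $\min\{\ell_1,\ell_2\}\ge\ell+r''-\alpha$, and monotonicity of $s\mapsto s/(s+r''-\alpha)$ combined with $D\ge\alpha$ all check out. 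The paper's route buys brevity; yours buys self-containedness and spells out explicitly what the footnote only gestures at. One minor remark: the reduction to $\ell_1\le\ell_2$ is superfluous --- the bound $d_X(p,q)\le 2\alpha\ell/\ell_1\le 2\alpha\ell/\min\{\ell_1,\ell_2\}$ holds regardless of the ordering, since $\ell_1\ge\min\{\ell_1,\ell_2\}$ trivially.
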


\begin{proof}
  This is an application of the $\CAT(0)$-condition,
  see~\cite[Lemma~3.6(i)]{Bartels-Lueck(2012CAT(0)flow)}\footnote{Strictly speaking, this
  reference gives
  $d_X(c_{x_1,x}(T+t),c_{x_2,x}(T+t+\tau)) \leq \frac{2\alpha(L+2r'+\alpha)}{r''}$.  The
  statement is not quite symmetric in $x_1$ and $x_2$ as we only know
  $d_X(x,x_2) \leq d_X(x,x_1) + d_X(x_1,x_2) \leq R + L + \alpha$.  Thus in
  applying~\cite[Lemma~3.6(i)]{Bartels-Lueck(2012CAT(0)flow)} we need to use
  $L' := L + \alpha$.  In any event, the exact estimate is not important.}.
\end{proof}

\begin{lemma}\label{lem:estimate-d_fol} For all $\alpha > 0$, $\Delta > 0$, $L > 0$ and
  $\delta > 0$ there are $R > 0$, $0 \leq T \leq R - \Delta$ such that for all
  $x,x_1,x_2 \in X$ with $d_X(x_1,x_2) \leq \alpha$, $d_X(x_1,x) \leq R+L$ we have
  \begin{equation*}
    \fold{\FS}(\Phi_T (c_{x_1,x}),\Phi_T (c_{x_2,x})) < (\alpha,\delta).
  \end{equation*} 
\end{lemma}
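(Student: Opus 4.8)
The foliated-distance conclusion $\fold{\FS}(\Phi_T(c_{x_1,x}),\Phi_T(c_{x_2,x})) < (\alpha,\delta)$ unwinds to the existence of some $s\in[-\alpha,\alpha]$ with $d_\FS\bigl(\Phi_s(\Phi_T(c_{x_1,x})),\Phi_T(c_{x_2,x})\bigr)<\delta$. The plan is to use the ``synchronizing'' shift $s:=\tau:=d_X(x,x_1)-d_X(x,x_2)$; by the triangle inequality $|\tau|\le d_X(x_1,x_2)\le\alpha$, so $\tau$ is admissible, and $\Phi_\tau(\Phi_T(c_{x_1,x}))=\Phi_{T+\tau}(c_{x_1,x})$. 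Writing $T:=r'+r''$ and $R:=r''+2r'+\alpha$ for parameters $r',r''>0$ (with $r''>\alpha$) to be fixed at the end, the task becomes to bound
\begin{equation*}
  d_\FS\bigl(\Phi_{T+\tau}(c_{x_1,x}),\Phi_T(c_{x_2,x})\bigr)=\int_{\IR}\frac{g(t)}{2e^{|t|}}\,dt,\qquad g(t):=d_X\bigl(c_{x_1,x}(T+\tau+t),\,c_{x_2,x}(T+t)\bigr).
\end{equation*}

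On the central window $|t|\le r'$ this is precisely what Lemma~\ref{lem:CAT0-estimate} delivers (its $T$, $R$, $\tau$ are ours): $g(t)\le E$ with $E:=\tfrac{2\alpha(L+2r'+2\alpha)}{r''}$. For $|t|>r'$ I would use only that a generalized geodesic is $1$-Lipschitz, so $g$ is $2$-Lipschitz and hence $g(t)\le E+2(|t|-r')$ there; the weight $\tfrac{1}{2e^{|t|}}$ then makes this tail integrable. Using $\int_{|t|\le r'}\tfrac{dt}{2e^{|t|}}=1-e^{-r'}$ together with $\int_{r'}^{\infty}\tfrac{E+2(s-r')}{e^{s}}\,ds=Ee^{-r'}+2e^{-r'}$, the two contributions add up to the clean bound
\begin{equation*}
  d_\FS\bigl(\Phi_{T+\tau}(c_{x_1,x}),\Phi_T(c_{x_2,x})\bigr)\ \le\ E+2e^{-r'}.
\end{equation*}

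It remains to choose $r',r''$. Given $\alpha,\Delta,L,\delta$, first take $r'$ large enough that $r'\ge\Delta$ and $2e^{-r'}<\delta/2$; then, with $r'$ fixed, take $r''>\alpha$ large enough that $E<\delta/2$. With $T:=r'+r''$ and $R:=r''+2r'+\alpha$ one has $0\le T\le R-\Delta$ (the inequality $T\le R-\Delta$ reduces to $r'\ge\Delta-\alpha$, which holds), and the displayed bound gives $d_\FS\bigl(\Phi_{T+\tau}(c_{x_1,x}),\Phi_T(c_{x_2,x})\bigr)<\delta$ for every admissible $x,x_1,x_2$, i.e.\ $\fold{\FS}(\Phi_T(c_{x_1,x}),\Phi_T(c_{x_2,x}))<(\alpha,\delta)$ via the shift $s=\tau$. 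The geometric heart of the argument is entirely inside Lemma~\ref{lem:CAT0-estimate}, which is already available; what is left is bookkeeping, and the one point to get right is the order of quantifiers in choosing the two radii: $E$ grows linearly in $r'$, so $r'$ must be pinned down first, and this works only because, thanks to the cancellation above, the tail term $2e^{-r'}$ does not involve $r''$, so there is no circularity.
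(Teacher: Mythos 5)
Your argument is correct and follows the same route as the paper: the synchronizing shift $\tau=d_X(x,x_1)-d_X(x,x_2)$, the choice $T=r'+r''$, $R=r''+2r'+\alpha$, Lemma~\ref{lem:CAT0-estimate} on the central window $|t|\le r'$, and a Lipschitz bound on the tails, with $r'$ pinned down before $r''$. The paper runs the same estimate with an auxiliary parameter $\delta'<1$ and a three-way $\delta/3$ split, bounding the tails crudely by $2|t|+1$; your version keeps the sharper tail bound $E+2(|t|-r')$ and exploits the exact cancellation $E(1-e^{-r'})+(E+2)e^{-r'}=E+2e^{-r'}$ to separate the $r'$- and $r''$-dependent terms, which makes the quantifier bookkeeping you flag at the end transparent rather than just correct.
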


\begin{proof}
  Pick $1 > \delta' > 0, r' > \Delta, r'' > \alpha$ such that
  \begin{equation*}
    \int_{-\infty}^{-r'} \frac{2|t|+1}{2e^{|t|}} dt < \frac{\delta}{3}, \quad  
    \int_{-r'}^{r'} \frac{\delta'}{2e^{|t|}} dt < \frac{\delta}{3} \quad \text{and} \quad
    \frac{2\alpha(L+2r'+2\alpha)}{r''} < \delta'.  
  \end{equation*}
  Set $R := 2r' + r'' + \alpha$ and $T := r' + r''$.  Then
  $0 \leq T = R - r' - \alpha \leq R - \Delta$.  Set $\tau := d_X(x,x_1) - d_X(x,x_2)$.
  Then $\tau \in [-\alpha,\alpha]$ as $d_X(x_1,x_2) \leq \alpha$.  By
  Lemma~\ref{lem:CAT0-estimate} we have for all $t \in [-r',r']$
  \begin{equation*}
    d_X (c_{x_1,x}(T+\tau+t),c_{x_2,x}(T+t)) < \delta'.
  \end{equation*}
  For $t \in (-\infty,-r']$ we have
  \begin{align*}
    d_X(c_{x_1,x}(T+\tau+t),c_{x_2,x}
    &(T+t)) \\  \leq
    & \quad d_X(c_{x_1,x}(T+\tau+t),c_{x_1,x}(T+\tau-r'))\\
    & + \quad d_X(c_{x_1,x}(T+\tau-r'),c_{x_2,x}(T-r'))\\& + \quad d_X(c_{x_2,x}(T-r'),c_{x_2,x}(T+t))
    \\
    <
    & \quad |t + r'| + \delta' + |t + r'| \\
    = & \quad 2|t+r'| + \delta' \leq 2|t| + \delta' < 2|t| + 1.	
  \end{align*}
  Similarly, for $t \in [r',\infty)$ we have
  \begin{equation*}
    d_X(c_{x_1,x}(T+t),c_{x_2,x}(T+\tau+t)) < 2|t| + 1.
  \end{equation*}
  We can now estimate
  \begin{align*}
    d_\FS & \big( \Phi_{T+\tau} (c_{x_1,x}), \Phi_{T} (c_{x_2,x}) \big) \\ 
          & = \int_{-\infty}^{\infty} \frac{d_X(c_{x_1,x}(T+\tau+t),c_{x_2,x}(T+t))}{2e^{|t|}}dt \\
          & < \int_{-\infty}^{-r'}\frac{2|t| + 1}{2e^{|t|}}dt + \int_{-r'}^{r'}
            \frac{\delta'}{2e^{|t|}}dt + \int_{r'}^{+\infty} \frac{2|t|+1}{2e^{|t|}}dt \\
          & < \frac{\delta}{3} + \frac{\delta}{3} + \frac{\delta}{3} \quad = \quad \delta.
  \end{align*}
  Therefore $\fold{\FS}(\Phi_T (c_{x_1,x}),\Phi_T (c_{x_2,x})) < (\alpha,\delta)$.
\end{proof}

\begin{proof}[Proof of Theorem~\ref{thm:X-to-FS}]
  Let $M \subseteq G$ be compact.  By compactness of $M \cdot b$ there is $\alpha > 0$
  such that $d_X(b,gb) \leq \alpha$ for all $g \in M$.  Let $\delta > 0$, $L > 0$ be
  given.  Let $\Delta$ be the number from Lemma~\ref{lem:pi_R'-estimate}.  Let $R > 0$ and
  $0 \leq T \leq R - \Delta$ be the numbers from Lemma~\ref{lem:estimate-d_fol}.  Let
  $f_0 \colon X \to \FS$ be the map $x \mapsto \Phi_T(c_{b,x})$.  It is uniformly
  continuous by Lemma~\ref{lem:unif-cont-flow} and~\ref{lem:unif-cont-x-to-c}.  We can now
  verify the two assertions from Theorem~\ref{thm:X-to-FS}.  \\[1ex]~\ref{thm:X-to-FS:G}
  Let $g \in M$ and $x \in \ballinX{R+L}$.  Then $f_0(gx) = \Phi_T(c_{gb,gx})$ and
  $gf_0(x) = \Phi_T(c_{b,gx})$.  As $g \in M$, we have $d_X(b,gb) < \alpha$.  Also
  $d_X(gb,gx) = d_X(b,x) \leq R + L$.  We can therefore apply
  Lemma~\ref{lem:estimate-d_fol} and conclude
  \[
    \fold{\FS}(f_0(gx),gf_0(x)) =
    \fold{\FS}(\Phi_T(c_{gb,gx}),\Phi_T(c_{b,gx}))<(\alpha,\delta).
  \]%
\ref{thm:X-to-FS:pi} Let $x \in \ballinX{R+L}$ and $R' \geq R$.  Then
  $f_0(x) = \Phi_T(c_{b,x})$ and $f_0(\pi_{R'}(x)) = \Phi_T(c_{b,\pi_{R'}(x)})$.  As
  $T \leq R - \Delta$ we have $R' \geq R \geq T + \Delta$.  We can therefore apply
  Lemma~\ref{lem:pi_R'-estimate} and conclude
  $d_\FS\bigl(f_0(x),f_0(\pi_{R'}(x))\bigr)=
  d_\FS(\Phi_T(c_{b,x}),\Phi_T(c_{b,\pi_{R'}(x)})) < \delta$.  In particular we get
  \[\fold{\FS}\bigl(f_0(x),f_0(\pi_{R'}(x))\bigr) < (\alpha,\delta).\]
\end{proof}


\typeout{---------------- Section 6: Three properties of the flow
  space. -----------------}

\section{Three properties of the flow space}\label{sec:three-properties}

Recall from
Subsection~\ref{subsec:Proof_of_Theorem_ref(thm:X-to-J_intro_using_Theorems_ref(thm:X-to-FS))_and_ref(thm:FS-to-J)}
and Section~\ref{sec:proof-of-X-to-FS} that to prove our main
Theorem~\ref{thm:X-to-J_intro} it remains to prove Theorem~\ref{thm:FS-to-J} about maps
$\FS \to J_\calv^N(G)$.  In this section we formulate three propositions about the flow
space and use them to prove Theorem~\ref{thm:FS-to-J}.  These three propositions will be
proved in the forthcoming sections.


\subsection{Long thin covers}

\begin{definition}[$\alpha$-long cover]\label{def:alpha_long_cover}
  A cover $\calu$ of the flow space $\FS$ is said to be \emph{$\alpha$-long} if for any
  $c_0 \in \FS$ there exists $U \in \calu$ such that
  $\Phi_{[-\alpha,\alpha]}(c_0) \subseteq U$.
\end{definition}

Typically such covers are thin in directions transversal to the flow and are often
referred to as long thin covers.

\begin{remark}[Discrete versus totally
  disconnected]\label{rem:discrete_versus_totally_disconnected} For a discrete group
  $\Gamma$ it is possible to construct $G$-equivariant maps from the flow space $\FS$ to
  $G$-simplicial complexes (such as $E_\calv^N(\Gamma)$) via the $\alpha$-long
  $\CVCYC$-covers of $\FS$ from~\cite[Sec.~5]{Bartels-Lueck-Reich(2008cover)}
  or~\cite{Kasprowski-Rueping(2017long-thin)}.  Here a $\CVCYC$-cover consists of open
  subsets $U$ of $\FS$ for which there is $V \in \CVCYC$ such that:
  $gU \cap U \neq \emptyset$ iff $g \in V$ and then $gU=U$.  For a td-group $G$ such $U$
  will not exist; elements of $g \in G \setminus V$ that are close to $V$ will typically
  produce $gU\cap U \neq \emptyset$. The way out is to consider $G\cdot U$. In the
  discrete case there is a map $G\cdot U \to G/V$ and this generalizes to the totally
  disconnected case. In fact we will even construct $U \to G$, but this will typically not
  be equivariant and create some $(\beta,\eta)$-errors. 
  Some equalities from the case of discrete groups, we will here be replaced with  foliated
  bounds for the distance. 
  This applies for instance to the map
  $\FS \to J_{\calv}^{N}(G)^{\wedge}$.
\end{remark}

Since $G$-action commutes with the flow, we get also a flow on $G\backslash \FS$.

\begin{lemma}\label{lem:passing_to_the_orbit_space}
  \begin{enumerate}
  \item\label{lem:passing_to_the_orbit_space:down} Let $\calu$ be a $\alpha$-long cover of
    the flow space $\FS$ such that each element $U \in \calu$ is $G$-invariant.  Then
    $\calu' := \{p(U) \mid U \in \calu\}$ is a $\alpha$-long covering of
    $G\backslash \FS$;

  \item\label{lem:passing_to_the_orbit_space:up} Let $\calv$ be a $\alpha$-long covering
    of $G\backslash \FS$. Then $\calv' = \{p^{-1}(V) \mid V \in \calv$ is a
    $\alpha$-long cover of the flow space $\FS$ such that each element $V \in \calu$ is
    $G$-invariant.
  \end{enumerate}
\end{lemma}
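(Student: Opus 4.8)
The plan is to unwind the definitions, using only standard properties of the quotient map $p \colon \FS \to G\backslash\FS$: it is continuous, surjective and open (orbit maps of topological group actions are open maps), it is $G$-invariant in the sense that $p(gc) = p(c)$ for all $g \in G$, $c \in \FS$, and the flow $\Phi$ descends to a flow $\overline{\Phi}$ on $G\backslash\FS$ with $p \circ \Phi_\tau = \overline{\Phi}_\tau \circ p$ for every $\tau \in \IR$; this last point is exactly the remark, made just before the lemma, that the $G$-action commutes with the flow.

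For~\ref{lem:passing_to_the_orbit_space:down} I would first check that $\calu'$ is a cover: $\bigcup_{U \in \calu} p(U) = p\bigl(\bigcup_{U \in \calu} U\bigr) = p(\FS) = G\backslash\FS$ by surjectivity, and each $p(U)$ is open because $p$ is an open map. For the $\alpha$-long property, given $\overline{c}_0 \in G\backslash\FS$ I would pick a lift $c_0 \in \FS$ with $p(c_0) = \overline{c}_0$, use $\alpha$-longness of $\calu$ to get $U \in \calu$ with $\Phi_{[-\alpha,\alpha]}(c_0) \subseteq U$, and then apply $p$, using $p \circ \Phi_\tau = \overline{\Phi}_\tau \circ p$, to conclude $\overline{\Phi}_{[-\alpha,\alpha]}(\overline{c}_0) = p\bigl(\Phi_{[-\alpha,\alpha]}(c_0)\bigr) \subseteq p(U) \in \calu'$.

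For~\ref{lem:passing_to_the_orbit_space:up} each $p^{-1}(V)$ is open by continuity of $p$ and is $G$-invariant, since $gc \in p^{-1}(V)$ if and only if $p(gc) = p(c) \in V$ if and only if $c \in p^{-1}(V)$; and $\bigcup_{V \in \calv} p^{-1}(V) = p^{-1}\bigl(\bigcup_{V \in \calv} V\bigr) = p^{-1}(G\backslash\FS) = \FS$, so $\calv'$ is a cover. For the $\alpha$-long property, given $c_0 \in \FS$ I would set $\overline{c}_0 := p(c_0)$, choose $V \in \calv$ with $\overline{\Phi}_{[-\alpha,\alpha]}(\overline{c}_0) \subseteq V$, and note that $p(\Phi_t(c_0)) = \overline{\Phi}_t(\overline{c}_0) \in V$ for all $t \in [-\alpha,\alpha]$, i.e.\ $\Phi_{[-\alpha,\alpha]}(c_0) \subseteq p^{-1}(V) \in \calv'$.

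The argument is purely formal, so I do not expect a real obstacle; the one point worth isolating is the openness of $p$, which is what ensures that the images $p(U)$ in~\ref{lem:passing_to_the_orbit_space:down} are again open sets, while everything else is a direct translation through the commuting relation $p \circ \Phi_\tau = \overline{\Phi}_\tau \circ p$.
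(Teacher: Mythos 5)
Your proof is correct and simply unwinds the definitions, which is exactly what the paper does — its own proof reads ``This is obvious from the definitions.'' One small remark: for the openness of $p(U)$ in part~\ref{lem:passing_to_the_orbit_space:down} you don't even need that $p$ is an open map; since $U$ is $G$-invariant, $p^{-1}(p(U)) = U$ is open, so $p(U)$ is open already because $p$ is a quotient map.
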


\begin{proof}
  This is obvious from the definitions.
\end{proof}


\subsection{Partitions of unity for long thin covers}

\begin{proposition}[Partition of unity]\label{prop:partition} For all $\alpha > 0$,
  $\e > 0$, $N \in \IN$ there is $\alpha' > 0$ such that the following holds.  Let $\calu$
  be a $\alpha'$-long cover of dimension $\leq N$ by $G$-invariant open subsets of $\FS$.
  Then there exists a partition of unity $\{ t_U \colon \FS \to [0,1] \mid U \in \calu \}$
  subordinate to $\calu$ and $\delta > 0$ such that
  \begin{enumerate}
  \item for $U \in \calu$, $c,c' \in \FS$ with $\fold{\FS}(c,c') < (\alpha,\delta)$ we have
    \begin{equation*}
      |t_U(c) -t_U(c')| < \e;  
    \end{equation*}
  \item the $t_U$ are $G$-invariant.
  \end{enumerate}
\end{proposition}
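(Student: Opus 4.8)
The plan is to fix $\alpha' > 0$ with $\alpha' > \alpha/\e$, say $\alpha' := 1 + 2\alpha/\e$, and then, given an $\alpha'$-long cover $\calu$ of dimension $\le N$ by $G$-invariant open subsets of $\FS$, to build $\{t_U\}$ in three steps: shrink $\calu$ along the flow, form the usual normalized partition of unity out of truncated distance functions, and finally average along the flow. For the shrinking I would put $U^- := \{ c \in \FS \mid \Phi_{[-\alpha',\alpha']}(c) \subseteq U \}$ for $U \in \calu$. Since $\FS \setminus U$ is closed and $[-\alpha',\alpha']$ is compact, the set $\{ c \mid \Phi_{[-\alpha',\alpha']}(c) \cap (\FS \setminus U) \neq \emptyset \}$ is closed, so $U^-$ is open, and it is $G$-invariant because $U$ is and the flow commutes with the $G$-action. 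Two features are used repeatedly: (a) the $\alpha'$-longness of $\calu$ says exactly that $\calu^- := \{ U^- \mid U \in \calu \}$ is again a cover of $\FS$ (if $\Phi_{[-\alpha',\alpha']}(c) \subseteq U$ then $c \in U^-$); and (b) $\Phi_{[-\alpha',\alpha']}(U^-) \subseteq U$, i.e. the shrunk sets lie ``$\alpha'$-deep'' inside the original ones along the flow.

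Next I would set $a_U(c) := \min\{1, d_\FS(c, \FS \setminus U^-)\}$; each $a_U$ is $1$-Lipschitz, $G$-invariant, $[0,1]$-valued, and vanishes precisely off $U^-$. As $\calu$ has dimension $\le N$, at most $N+1$ of the $a_U$ are nonzero at any point, so $\Sigma := \sum_{U \in \calu} a_U$ is a pointwise finite sum, is $2(N+1)$-Lipschitz, and takes values in $(0,N+1]$ (positive because $\calu^-$ covers). Being $G$-invariant, continuous and positive, and since the $G$-action on $\FS$ is cocompact by Lemma~\ref{lem:G-on-FS-proper}, $\Sigma$ is bounded below by some $\sigma_0 > 0$ on all of $\FS$. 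Hence the $\hat a_U := a_U/\Sigma$ form a partition of unity subordinate to $\calu^-$, and there is a constant $L_0$ depending only on $N$ and $\sigma_0$ (not on $U$) with $|\hat a_U(c) - \hat a_U(c')| \le L_0\, d_\FS(c,c')$.

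Finally I would define
\[
  t_U(c) := \frac{1}{2\alpha'} \int_{-\alpha'}^{\alpha'} \hat a_U\bigl(\Phi_s(c)\bigr) \, ds \qquad (U \in \calu).
\]
Then $t_U$ is continuous, $G$-invariant, $[0,1]$-valued, and $\sum_{U} t_U \equiv 1$ since $\sum_U \hat a_U \equiv 1$; moreover $t_U(c) > 0$ forces $\hat a_U(\Phi_s(c)) > 0$, hence $\Phi_s(c) \in U^-$, for some $|s| \le \alpha'$, so $c = \Phi_{-s}(\Phi_s(c)) \in U$ by (b), giving $\{ t_U \neq 0 \} \subseteq U$, so $\{t_U\}$ is subordinate to $\calu$ (if one insists on closed supports inside $U$ one first replaces $a_U$ by $\max\{0, a_U - \kappa\}$ for a suitable $\kappa > 0$, again produced by compactness of $G \backslash \FS$). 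This is property~(ii). Since the integrand is $[0,1]$-valued, comparing the integrals over $[-\alpha'+\sigma,\alpha'+\sigma]$ and $[-\alpha',\alpha']$ gives $|t_U(\Phi_\sigma(c)) - t_U(c)| \le |\sigma|/(2\alpha')$ for all $\sigma$, which is $< \e/2$ once $|\sigma| \le \alpha$, by the choice of $\alpha'$. And by Lemma~\ref{lem:unif-cont-flow}~\ref{lem:unif-cont-flow:estimate}, $d_\FS(\Phi_s(c), \Phi_s(c')) \le e^{\alpha'} d_\FS(c,c')$ for $|s| \le \alpha'$, so $t_U$ is Lipschitz with some constant $L_1$ depending only on $\alpha'$, $N$, $\sigma_0$ (independent of $U$); put $\delta := \e/(2L_1)$.

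To conclude, if $\fold{\FS}(c,c') < (\alpha, \delta)$ pick $\sigma \in [-\alpha,\alpha]$ with $d_\FS(\Phi_\sigma(c), c') < \delta$; then
\[
  |t_U(c) - t_U(c')| \le |t_U(c) - t_U(\Phi_\sigma(c))| + |t_U(\Phi_\sigma(c)) - t_U(c')| < \tfrac{\e}{2} + L_1\delta = \e,
\]
which is property~(i). The only genuinely delicate point is the shrinking step: one must pass to $\calu^-$ before averaging, so that averaging along the flow does not enlarge supports out of the $U$'s, and this is possible precisely because $\calu$ is $\alpha'$-long — the shrinking depth and the averaging window both have to equal the longness parameter $\alpha'$. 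The use of cocompactness is confined to bounding $\Sigma$ away from $0$, which is exactly what keeps the normalization $\hat a_U = a_U/\Sigma$, hence $t_U$, uniformly continuous; everything else is routine once the windows are matched up correctly.
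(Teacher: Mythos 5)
Your proof is correct, but it takes a genuinely different route than the paper. The paper reduces to the non-equivariant compact case: it shows that $G\backslash\FS$ is a compact metric space with a flow (Lemma~\ref{lem:G-mod-FS-dim-compact}), proves a non-equivariant analogue there (Proposition~\ref{prop:partition-no-G}), and pulls back via Lemma~\ref{lem:passing_to_the_orbit_space}. In that non-equivariant proof, the bump functions $f_U$ are built to be ``slow'' along the flow \emph{before} normalizing, via the pointwise $\sup$-construction of Lemma~\ref{lem:slow-bump-fct}: $f(z) := \sup_t\bigl(1-\tfrac{|t|}{\widehat\alpha}\bigr)\varphi(\Phi_t(z))$, after first refining $\calu$ to a finite subcover and using a Lebesgue number to produce compact cores $K_U$. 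The slowness then has to be pushed through the normalization $t_U=f_U/\sum f_V$, which is where the $N$-dependent factor $(2N+3)$ in the choice of $\widehat\alpha$ comes from. You instead stay on $\FS$ and build $G$-invariance directly into the raw bump functions via $a_U(c)=\min\{1,d_\FS(c,\FS\setminus U^-)\}$ (using that $d_\FS$ is $G$-invariant and $U^-$ is $G$-invariant), normalize first, and only \emph{then} average along the flow over the window $[-\alpha',\alpha']$ to acquire slowness; the matching of the shrinking depth and the averaging window to the longness parameter $\alpha'$ keeps the supports inside the original $U$. This reverses the order of ``make it slow'' and ``normalize'' and has the pleasant feature that the flow-direction modulus comes out as exactly $|\sigma|/(2\alpha')$ with no $N$-dependence, so your $\alpha'$ depends only on $\alpha,\e$. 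Both arguments use cocompactness in the same essential place — yours through $\sigma_0 := \inf\Sigma > 0$, the paper's through compactness of $G\backslash\FS$. Your version avoids descending and re-lifting and avoids the $\sup$-construction, at the cost of carrying out a direct (but routine) Lipschitz bookkeeping on $\FS$; both are equally valid.
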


\begin{proof}
  This follows from Lemma~\ref{lem:passing_to_the_orbit_space} and from
  Proposition~\ref{prop:partition-no-G} applied to $G \backslash \FS$.
\end{proof}

\begin{remark}
  Using quantifiers the beginning of Proposition~\label{ref:partition} reads as
  \begin{equation*}
    \forall \alpha,\epsilon,N\; \exists \alpha'\; \forall \,\calu\; \exists \{t_U\}, \delta\;	\text{such that}\ldots 
  \end{equation*}
\end{remark}


\subsection{Dimension of long thin covers}

\begin{proposition}[Dimension of long thin covers]\label{prop:dimension}
    There is $N \in \IN$ such that for any $\alpha' > 0$ there is $\alpha''$ such that the following is true.
    Let $\calw$ be an $\alpha''$-long cover of $\FS$ by $G$-invariant open subsets.
    Then there exist collections $\calu_0,\ldots,\calu_N$ of open $G$-invariant subsets of $\FS$ such that
    \begin{enumerate}
    \item $\calu := \calu_0 \sqcup \ldots \sqcup \calu_N$ is an $\alpha'$-long cover  of $\FS$,
      in particular $\calu_i \cap \calu_j = \emptyset$ for $i \not= j$;
    	\item for each $i$ the open sets in $\calu_i$ are pairwise disjoint;
    	\item for each $U \in \calu = \calu_0 \sqcup \ldots \sqcup \calu_N$
          there is $W \in \calw$ with $U \subseteq W$. 
    \end{enumerate}
\end{proposition}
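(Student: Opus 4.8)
## Proof proposal for Proposition~\ref{prop:dimension}

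The plan is to reduce to the orbit space $G\backslash\FS$ and there invoke a known bound on the dimension of long thin covers for flow spaces, in the spirit of~\cite{Bartels-Lueck-Reich(2008cover),Kasprowski-Rueping(2017long-thin)}. By Lemma~\ref{lem:passing_to_the_orbit_space} and Lemma~\ref{lem:G-on-FS-proper}, the flow $\Phi$ descends to a flow on the proper metric space $\bar\FS := G\backslash\FS$, and $G$-invariant open covers of $\FS$ correspond exactly to open covers of $\bar\FS$, preserving the $\alpha$-long property in both directions. So it suffices to prove the statement downstairs: there is $N$ such that for every $\alpha'>0$ there is $\alpha''$ so that any $\alpha''$-long open cover $\bar\calw$ of $\bar\FS$ admits a refinement $\bar\calu = \bar\calu_0\sqcup\dots\sqcup\bar\calu_N$ which is $\alpha'$-long, with each $\bar\calu_i$ consisting of pairwise disjoint open sets.

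The first step is to record the structural facts about $\bar\FS$ that make the flow-space machinery applicable: $\bar\FS$ is proper (Lemma~\ref{lem:FS-is-proper}, Lemma~\ref{lem:G-on-FS-proper}), the induced flow has at most unit speed (Lemma~\ref{lem:unif-cont-flow}), and the set of periodic points of bounded period maps to a finite-dimensional, suitably controlled subset — here one uses Lemma~\ref{lem:about-tau_c} together with Assumption~\ref{assum:good-FS_0}, which guarantees that periods cannot accumulate badly on the compact fundamental domain $\FS_0$. The number $N$ will be $N = \dim(\bar\FS) + 1$ (or the analogous expression from the cited references combining the covering dimension with a contribution from the flow direction). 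Then, given $\alpha'$, I would run the standard construction: cover a compact piece of $\bar\FS$ by flow boxes of length $\gg \alpha'$, use the flow to transport a triangulation-like decomposition transversal to the flow, and apply the classical ``order $\le \dim + 1$'' argument (a Lebesgue-type / nerve argument) to colour the resulting boxes into $N+1$ families $\bar\calu_0,\dots,\bar\calu_N$, each internally disjoint. The parameter $\alpha''$ is chosen large relative to $\alpha'$ and to the diameter of the flow boxes so that each box still contains a full $\Phi_{[-\alpha'',\alpha'']}$-orbit of a point of $\bar\calw$, i.e. so that the final cover refines $\bar\calw$; this is exactly conditions (i)--(iii).

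Having produced $\bar\calu_i$ downstairs, I pull back: set $\calu_i := \{p^{-1}(\bar U)\mid \bar U\in\bar\calu_i\}$. By Lemma~\ref{lem:passing_to_the_orbit_space}\,\ref{lem:passing_to_the_orbit_space:up} each $\calu_i$ consists of $G$-invariant open sets, $\calu = \calu_0\sqcup\dots\sqcup\calu_N$ is $\alpha'$-long, and disjointness within each $\bar\calu_i$ lifts to disjointness within each $\calu_i$ since $p$ is surjective (preimages of disjoint sets are disjoint); that $\calu_i\cap\calu_j=\emptyset$ for $i\ne j$ comes from the corresponding disjointness downstairs. For (iii), given $\bar U\subseteq\bar W$ with $\bar W\in\bar\calw$ and $\calw = \{p^{-1}(\bar W)\}$, we get $p^{-1}(\bar U)\subseteq p^{-1}(\bar W)\in\calw$.

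The main obstacle is the dimension bound downstairs, i.e. showing $\dim(\bar\FS)<\infty$ and extracting the explicit $N$. Unlike the $\CAT(0)$-space $X$, which is assumed of finite covering dimension, the flow space $\FS(X)$ of generalized geodesics is a priori larger; one must argue, following~\cite[Sec.~1]{Bartels-Lueck(2012CAT(0)flow)}, that $\dim\FS$ is controlled by $\dim X$ (roughly, $\FS$ is built from pairs of endpoints plus a length parameter, so $\dim\FS \le 2\dim X + 1$ or similar), and that this passes to the cocompact quotient $\bar\FS$. The subtlety specific to the td-setting — that the period function $c\mapsto\tau_c$ behaves well — is precisely what Assumption~\ref{assum:good-FS_0} is designed to handle, and it is where the argument genuinely diverges from the discrete case; I expect the bookkeeping around periodic flow lines to be the delicate part, with the colouring/refinement step being essentially the classical one.
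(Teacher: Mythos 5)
Your overall strategy---pass to the orbit space $G\backslash\FS$ via Lemma~\ref{lem:passing_to_the_orbit_space}, prove a non-equivariant statement there using the long-thin-cover machinery for flow spaces, then pull back---is exactly the paper's route (Proposition~\ref{prop:long-thin-cover-non-equivariant}, obtained from~\cite{Kasprowski-Rueping(2017cov)} via Proposition~\ref{prop:long-thin-not-colored}, followed by the nerve/barycentric-subdivision colouring step in Lemma~\ref{lem:color}).

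There is, however, a genuine gap in how you obtain the finite-dimensionality of $G\backslash\FS$, which is the prerequisite for invoking any covering-dimension bound downstairs. You write that $\dim\FS$ is controlled by $\dim X$ ``and that this passes to the cocompact quotient,'' but that passage is not routine here: the $G$-action on $\FS$ is \emph{not} smooth (for a periodic generalized geodesic $c$ the isotropy group is the non-open $K_c$, while the stabilizer of the orbit under the flow involves $V_c$), so the orbit map $\FS \to G\backslash\FS$ does not have discrete fibers, and one cannot simply quote the fact that open maps with discrete fibers preserve dimension. The paper's Lemma~\ref{lem:G-mod-FS-dim-compact} deals with this by truncating geodesics to the subspaces $\FS_R$, on which the $G$-action \emph{is} smooth, and approximating an arbitrary open cover of $G\backslash\FS$ by pullbacks from $G\backslash\FS_R$ as $R\to\infty$. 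Your proposal does not address this, and without it the claim that $G\backslash\FS$ has finite covering dimension is unsubstantiated.

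A secondary point: you invoke Assumption~\ref{assum:good-FS_0} to control periodic orbits in the dimension argument. That assumption is not needed for Proposition~\ref{prop:dimension}---the paper explicitly notes that Assumption~\ref{assum:good-FS_0} is used only in the proof of Proposition~\ref{prop:local}. Downstairs, the splitting into points of short and long period (the sets $\big(G\backslash\FS\big)_{\leq\hat\alpha}$ and $\big(G\backslash\FS\big)_{>\hat\alpha}$) and the corresponding covers are produced directly by the results of~\cite{Kasprowski-Rueping(2017cov)}, which need only compactness and finite dimension of $G\backslash\FS$, not any hypothesis on the groups $V_c$.
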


\begin{proof}
  This follows from Lemma~\ref{lem:passing_to_the_orbit_space} and
  Proposition~\ref{prop:long-thin-cover-non-equivariant} applied to $G \backslash \FS$.
\end{proof}

\begin{remark}
	Using quantifiers the beginning of Proposition~\ref{prop:dimension}  reads as
	\begin{equation*}
          \exists N\; \forall \alpha'\; \exists \alpha''\; \forall \calw\; \exists \calu_0,\ldots,\calu_N\;
          \text{such that}\ldots 
	\end{equation*}
\end{remark}


\subsection{The local structure of $\FS$}

\begin{proposition}[Local structure]\label{prop:local} 
  Suppose that Assumption~\ref{assum:good-FS_0} holds.
  
  \noindent For all $\alpha'' > 0$ there are
  $\beta > 0$ and $\calv \subseteq \CVCYC$ finite with the following property.  For all
  $\eta > 0$ and all $c_0 \in \FS$ there exist $U \subseteq \FS$ open, $h \colon U \to G$,
  $V \in \calv$ and $\delta'' > 0$ such that
  \begin{enumerate}
  \item\label{prop:local:(1)} for some neighborhood $U_0$ of the orbit $Gc_0$ we have
    $\Phi_{[-\alpha'',\alpha'']}(U_0) \subseteq U$;
  \item\label{prop:local:(2)} $U$ is $G$-invariant;
  \item\label{prop:local:(3)} for $c,c' \in U$ we have 
    \begin{equation*}
      \fold{\FS}(c,c') < (\alpha'',\delta'')  \implies \fold{V}(h(c),h(c')) < (\beta,\eta);
    \end{equation*} 
  \item\label{prop:local:(4)} for $c \in U$, $g \in G$ we have;    
  \begin{equation*}
      \fold{V}(h(gc),gh(c)) < (\beta,\eta)
    \end{equation*} 
  \end{enumerate}
\end{proposition}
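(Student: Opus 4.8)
The plan is to work on the compact fundamental domain $\FS_0$ provided by Assumption~\ref{assum:good-FS_0} and to exploit part~\ref{assum:good-FS_0:V} of that assumption, which controls how the groups $V_c$ vary. First I would fix a bound $\ell$ depending on $\alpha''$ (roughly $\ell = 2\alpha''$ plus a bit, so that a geodesic whose $\alpha''$-flow-neighborhood ``almost returns'' to itself has period $\le \ell$): the point is that a nearby generalized geodesic $c$ which becomes relevant near $c_0$ has either $\tau_c > \ell$ (so its period is too long to matter at scale $\alpha''$) or $\tau_c \le \ell$, in which case Assumption~\ref{assum:good-FS_0:V} applies. For each $c_0 \in \FS_0$ let $V_{c_0}$ be as in~\ref{V_c}; by Lemma~\ref{lem:about-tau_c} these are covirtually cyclic (generated by $K_{c_0}$ together with one element realizing the minimal period, or just compact if $c_0$ is not periodic). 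Covering the compact set $\FS_0$ by finitely many of the neighborhoods $U$ from~\ref{assum:good-FS_0:V} (for the chosen $\ell$), I obtain a \emph{finite} collection $\calv \subseteq \CVCYC$ of candidate groups — the $V_{c_0}$ for the finitely many centers of this cover, plus their conjugates as needed to make the collection $G$-invariant up to the action, but finiteness up to conjugacy is what matters for $\calv$ — together with a single $\beta > 0$ (extracted from the compactness of $\FS_0$ and the properness of $d_G$, bounding how far the relevant group elements can move).

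Given $c_0$, pick a center $c_1$ of the cover with $c_0$ (or rather $Gc_0$) in the corresponding neighborhood, and set $V := V_{c_1} \in \calv$. The set $U$ should be $G \cdot W$ where $W$ is a small $\Phi_{[-\alpha'',\alpha'']}$-saturated neighborhood of $c_1$ inside the chart; this makes~\ref{prop:local:(2)} automatic and gives~\ref{prop:local:(1)} by construction (shrinking $W$ so that its $\alpha''$-flow-thickening still sits inside the chart). The map $h \colon U \to G$ is the delicate object: on $W$ I would define $h$ by choosing, for each $c \in W$, a group element $g$ with $gc$ ``close to the $V$-orbit of $c_1$'' — concretely, using that the $G$-orbit of $c$ meets $W$ in (roughly) a $V$-coset worth of geodesics, pick a section of $c \mapsto$ (the coset $gV$ moving $c$ into standard position), and then extend $G$-equivariantly by $h(g'c) := g' h(c)$ for $c \in W$, $g' \in G$. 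This extension is well-defined only up to the ambiguity $h(c) \mapsto vh(c)$ for $v \in V$ when $g'c \in W$ again for two different $g'$, which is exactly why the conclusions~\ref{prop:local:(3)} and~\ref{prop:local:(4)} are stated with $\fold{V}$-bounds rather than equalities, and why continuity of $h$ cannot be expected (cf.\ Remark on failure of continuity). The $\delta''$ is then chosen by uniform continuity of the flow (Lemma~\ref{lem:unif-cont-flow}) together with Lemma~\ref{lem:unif-continuity}: if $\fold{\FS}(c,c') < (\alpha'',\delta'')$ then, after flowing by the matching time $t \in [-\alpha'',\alpha'']$ and moving into $W$, the representatives $h(c)$ and a $V$-translate of $h(c')$ are $d_G$-close, which is precisely $\fold{V}(h(c),h(c')) < (\beta,\eta)$ once $\delta''$ is small enough relative to $\eta$.

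Property~\ref{prop:local:(4)} — equivariance up to $\fold{V}$-error — follows from the construction: for $c \in W$ and $g \in G$, $h(gc) = gh(c)$ exactly when $gc$ lands back in $W$ and we made the ``same'' choice; in general $gc$ is handled via the $G$-equivariant extension, and the discrepancy between $gh(c)$ and the chosen representative $h(gc)$ is an element of $V$ of bounded $d_G$-size (bounded by $\beta$, using compactness of $\FS_0$ to control all the group elements that can arise moving points between the finitely many charts), again giving $\fold{V}(h(gc),gh(c)) < (\beta,\eta)$. The main obstacle, as usual in this circle of ideas, is the construction of $h$ on $W$ and the verification that the $V$-ambiguity is genuinely bounded: one must show that whenever $c, g'c \in W$ with $g' \notin$ the ``expected'' coset, then in fact $g'$ lies within bounded $d_G$-distance of $V$ — this is where Assumption~\ref{assum:good-FS_0:V} does the essential work, ruling out that nearby geodesics of short period have spuriously larger symmetry groups $V_c \not\subseteq V_{c_1}$, and where properness of $d_G$ and compactness of $\FS_0$ convert ``lies in $V$ up to a neighborhood'' into the uniform bound $\beta$.
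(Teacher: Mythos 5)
Your high-level strategy — work on the compact fundamental domain $\FS_0$, cover it by finitely many charts from Assumption~\ref{assum:good-FS_0}~\ref{assum:good-FS_0:V}, and build section-like maps $h$ whose ambiguity is controlled by a group from a finite collection — is the right one and matches the paper. But there is a genuine gap in how you handle geodesics $c_0$ with $\tau_{c_0} > \ell$.

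Your finite collection $\calv$ consists only of the groups $V_{c_1}$ for the centers $c_1$ of your cover. Assumption~\ref{assum:good-FS_0:V} guarantees $V_{c_0} \subseteq V_{c_1}$ only for $c_0$ near $c_1$ with \emph{short} period $\tau_{c_0} \le \ell$. For $c_0$ with $\tau_{c_0} > \ell$ — long-period or non-periodic — there is no control of $V_{c_0}$, nor even of the compact stabilizer $K_{c_0}$, by any $V_{c_1}$. For instance, $c_1$ may have trivial stabilizer (so $V_{c_1}=\{e\}$) while a nearby $c_0$ has a nontrivial compact $K_{c_0}$; then the group elements witnessing the error in conclusions~\ref{prop:local:(3)},~\ref{prop:local:(4)} lie in $K_{c_0}$ but in no member of your $\calv$, and the conclusions fail.

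The fix, which the paper implements, is a case distinction on $\tau_{c_0}$ together with Lemma~\ref{lem:isotropy-in-K}. That lemma produces, for each center, a neighborhood and a compact open subgroup $K$ (essentially $G_{c_1(0)}$) such that $K_c \subseteq K$ for all $c$ in the neighborhood. Covering $\FS_0$ by finitely many such charts yields a finite family $\calw$ with both compact open groups $K_W$ and periodic-stabilizer groups $V_W$, and one sets $\calv := \{K_W, V_W : W \in \calw\}$. For $\tau_{c_0} > \ell$ one takes $V := K_W$ and needs Addendum~\ref{add:constr-h-non-periodic}, which sharpens Proposition~\ref{prop:constr-h} by showing the error elements actually lie in $K_{c_0}$ (not merely $V_{c_0}$): any $a \in V_{c_0}$ moving $c_0$ by a flow-time in $[-3\alpha,3\alpha]$ must move it by time $0$ once $\tau_{c_0} > 3\alpha$. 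For $\tau_{c_0} \le \ell$ one takes $V := V_W$ and uses Proposition~\ref{prop:constr-h} plus $V_{c_0} \subseteq V_W$. Both the case split and the inclusion of the $K_W$ in $\calv$ are essential; without them the argument already breaks for non-periodic $c_0$ with nontrivial stabilizer. (A smaller point: the construction of $h$ and the derivation of the $\fold{V}$-bounds are not just ``pick a section and use uniform continuity''; the paper obtains $\delta$ via a compactness/contradiction argument precisely because the section map is discontinuous.)
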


Proposition~\ref{prop:local} is proven in Section~\ref{sec:local}. 

\begin{remark}
	Using quantifiers the beginning of Proposition~\ref{prop:local} reads as
	\begin{equation*}
          \forall \alpha''\; \exists \beta,\calv\; \forall \eta, c_0\; \exists U,h,\delta''\;
          \text{such that} \ldots
	\end{equation*}
\end{remark}

\begin{remark}[Failure of continuity]\label{rem:failure-of-cont-K(V)} The failure of
  continuity in Theorem~\ref{thm:X-to-J_intro} comes from the failure of continuity of $h$
  in Proposition~\ref{prop:local}.  The action of $G$ on the flow space $\FS$ is not free
  and so maps $h \colon U \to G$ defined on open subsets of the flow space are necessarily
  a bit pathological.  The action of $G$ on the flow space is proper and it seems possible
  to construct maps $U \to G/K(V)$ that are continuous where $K(V)$ is the maximal compact
  subgroup of $V$.  It should then be possible to obtain continuous maps in
  Theorem~\ref{thm:X-to-J_intro} after one replaces $J_\calv^N(G)^\wedge$ with
  $\ast_{i=0}^n \big( \coprod_{V \in \calv} G/K(V) \big)$.  We will not work this out in
  detail here.
\end{remark}


\subsection{Proof of Theorem~\ref{thm:FS-to-J} using
  Propositions~\ref{prop:partition},~\ref{prop:dimension}  and~\ref{prop:local}}%
\label{subsec:proof-FS-to-J}

\begin{proof}[Proof of Theorem~\ref{thm:FS-to-J}]
  We will use for $N$ the number appearing in Proposition~\ref{prop:dimension}.  Given
  $\alpha > 0$ and $\epsilon > 0$ Proposition~\ref{prop:partition} gives us a number
  $\alpha'$ and Proposition~\ref{prop:dimension} gives us then a number $\alpha'' > 0$.
  We can assume $\alpha'' \geq \alpha$.  Next Proposition~\ref{prop:local} gives us a
  number $\beta > 0$ and $\calv \subseteq \CVCYC$ finite.  Let now $\eta > 0$ be given.
  From Proposition~\ref{prop:local} we get for every $c_0 \in \FS$ an open subset
  $U(c_0) \subseteq \FS$, an open neighborhood $U_0(c_0)$ of the orbit $Gc_0$, a map
  $h(c_0) \colon U(c_0) \to G$, an element $V(c_0) \in \calv$ and $\delta''(c_0) > 0$ such
  that the assertions~\ref{prop:local:(1)},~\ref{prop:local:(2)},~\ref{prop:local:(3)}
  and~\ref{prop:local:(4)} hold. Since $G$ acts cocompactly on $\FS$, we can find a finite
  subset $I \subseteq \FS$ such that $\FS = \bigcup_{c_0 \in I} U_0(c_0)$ holds.  Define
  $\delta'' := \min\{\delta''(c_0) \mid c_0 \in I\}$ and
  $\calw = \{U(c_0) \mid c_0 \in I\}$. Then $\calw$ an $\alpha''$-long cover of $\FS$ by
  $G$-invariant open subsets and comes for every $W \in \calw$ with maps
  $h_W \colon W \to G$ and an element $V_W \in \calv$ satisfying
   \begin{enumerate}[label=(\thetheorem\alph*),leftmargin=*]
  \item\label{nl:fol-to-fol} for $W \in \calw$ and $c,c' \in W$ we have
    \begin{equation*}
      \fold{\FS}(c,c') < (\alpha'',\delta'')  \implies d_{V_W-\fol}(h_W(c),h_W(c')) < (\beta,\eta); 
    \end{equation*} 
  \item\label{nl:G-equiv} for $W \in \calw$, $c \in W$ and $g \in G$ we have
    \begin{equation*}
      d_{V_W-\fol}(h_W(gc),gh_W(c)) < (\beta,\eta).
    \end{equation*} 
  \end{enumerate}
  We apply Proposition~\ref{prop:dimension} to the cover $\calw$ and obtain collections
  $\calu_0,\ldots,\calu_N$ of open $G$-invariant subsets of $\FS$ such that
  \begin{enumerate}[label=(\thetheorem\alph*),leftmargin=*,resume]
  \item $\calu := \calu_0 \sqcup \ldots \sqcup \calu_N$ is an $\alpha'$-long cover of $\FS$;
  \item\label{nl:calu_i-disjoint} for each $i$ the open sets in $\calu_i$ are pairwise
    disjoint;
  \item for each $U \in \calu = \calu_0 \sqcup \ldots \sqcup \calu_N$, there is $W_U \in \calw$
    with $U \subseteq W_U$; we pick such a $W_U$ for each $U$ and set $V_U := V_{W_U}$,
    $h_U := h_{W_U}$.
  \end{enumerate}
  Proposition~\ref{prop:partition} yields a $G$-invariant partition of unity
  $\{ t_U \colon \FS \to [0,1] \mid U \in \calu\}$ subordinate to $\calu$ and
  $\delta > 0$, such that
  \begin{enumerate}[label=(\thetheorem\alph*),leftmargin=*,resume]
  \item\label{nl:t_U-t_U-less-eps} for $U \in \calu$, $c,c' \in \FS$ with
    $\fold{\FS}(c,c') < (\alpha,\delta)$ we have
    \begin{equation*}
      |t_U(c) -t_U(c')| < {\epsilon}.  
    \end{equation*}
  \end{enumerate}
  We can assume $\delta \leq \delta''$.  We now define
  $f_1 \colon \FS \to J^N_\calv(G)^\wedge$ by
  \begin{equation*} 
  c \mapsto
    [t_{U_0}(c)(h_{{U_0}}(c),V_{{U_0}}),\ldots,t_{U_N}(c)(h_{U_N}(c),V_{{U_N}})],
  \end{equation*}
  where for $i=0,\ldots,N$, $U_i \in \calu_i$ is determined by $c \in U_i$.  There is at
  most one such $U_i$ by~\ref{nl:calu_i-disjoint}.  If there is no $U_i \in \calu_i$
  containing $c$, then $t_{U}(c) = 0$ for all $U \in \calu_i$ and the choice of $U_i$ is
  without effect.  It remains to verify the two assertions of Theorem~\ref{thm:FS-to-J}.
  \\[1ex]~\ref{thm:FS-to-J:alpha-delta-to-beta-eta-eps} Let $c,c' \in \FS$ with
  $\fold{\FS}(c,c') < (\alpha,\delta)$.  We write
  \begin{eqnarray*}
    f_1(c) & = & [t_{U_0}(c)(h_{{U_0}}(c),V_{{U_0}}),\ldots,t_{U_N}(c)(h_{U_N}(c'),V_{{U_N}})]; \\
    f_1(c') & = & [t_{U'_0}(c')(h_{{U'_0}}(c'),V_{{U'_0}}),\ldots,t_{U'_N}(c')(h_{U'_N}(c'),V_{{U'_N}})].
  \end{eqnarray*} 
  If $\max\{t_{U_i}(c),t_{U'_i}(c')\} \geq \epsilon$  then,
  by~\ref{nl:t_U-t_U-less-eps}, we necessarily have $U_i = U'_i$ and
  $|t_{U_i}(c) - t_{U'_i}(c')| < \epsilon$.    Moreover, by~\ref{nl:fol-to-fol}
  $d_{V_{U_i}-\fol}(h_{U_i}(c),h_{U'_i}(c')) < (\beta,\eta)$. 
  If $\max\{t_{U_i}(c),t_{U'_i}(c')\} < \epsilon$, then  $|t_{U_i}(c) - t_{U'_i}(c')| < \epsilon$,
  as $t_{U_i}(c), t_{U'_i}(c') \in [0,1]$.
  Thus
  $\fold{J}(f_1(c),f_1(c')) < (\beta,\eta,\epsilon)$.
  \\[1ex]~\ref{thm:FS-to-J:equiv-up-to-beta-eta-eps} 
  Let $c \in \FS$ and $g \in G$.  We
  write
  \begin{eqnarray*}
    f_1(c) & = & [t_{U_0}(c)(h_{{U_0}}(c),V_{{U_0}}),\ldots,t_{U_N}(c)(h_{U_N}(c'),V_{{U_N}})]; \\
    f_1(gc) & = & [t_{U'_0}(gc)(h_{{U'_0}},(gc)V_{{U'_0}}),\ldots,t_{U'_N}(gc)(h_{U'_N}(gc),V_{{U'_N}})].
  \end{eqnarray*}
  Then
  \begin{eqnarray*}
    gf_1(c) & = & [t_{U_0}(c)(gh_{{U_0}}(c),V_{{U_0}}),\ldots,t_{U_N}(c)(gh_{U_N}(c),V_{{U'_N}})].
  \end{eqnarray*} 
  As the $U_i$ and the $t_{U_i}$ are all $G$-invariant we have $U_i = U'_i$,
  $t_{U_i}(c) = t_{U'_i}(gc)$ for all $i$.  Moreover, by~\ref{nl:G-equiv},
  $\fold{V_{U_i}}(h_{U_i}(gc),gh_{U_i}(c)) < (\beta,\eta)$.  In particular,
  $\fold{J}(f_1(gc),gf_1(c))<(\beta,\eta,\epsilon)$.
\end{proof}

In order to prove our main Theorem~\ref{thm:X-to-J_intro}, it suffices to prove
Propositions~\ref{prop:partition},~\ref{prop:dimension} and~\ref{prop:local}. This we will
do in the forthcoming sections.


\typeout{------------------- Section 7:  Partition of unity -----------------}

\section{Partition of unity}\label{sec:partition_of_unity}

In this section we finish the proof Proposition~\ref{prop:partition} by proving
Proposition~\ref{prop:partition-no-G} below.

\begin{proposition}[Partition of unity]\label{prop:partition-no-G} Let
  $Z$ be a compact metric space with a flow $\Phi$.
  For all $\alpha > 0$,
  $\epsilon > 0$, $N \in \IN$ there is $\widehat \alpha > 0$ such that
  the following holds.  Let $\calu$ be a $\widehat\alpha$-long cover
  of $Z$ of dimension $\leq N$ by open subsets.  Then there exists a
  partition of unity $\{ t_U \colon Z \to [0,1] \mid U \in \calu \}$
  subordinate to $\calu$ and $\delta > 0$ such that for $U \in \calu$,
  $z,z' \in Z$ with $\fold{\FS}(z,z') < (\alpha,\delta)$ we have
  \begin{equation*}
    |t_U(z) - t_U(z')| < \e.  
  \end{equation*}	
\end{proposition}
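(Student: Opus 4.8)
The plan is to construct the $t_U$ by a flow--averaging recipe in which $\widehat\alpha$ is taken large compared to $\alpha$, so that averaging along flow lines over a window of length $\widehat\alpha$ damps the flow--variation down to order $\alpha/\widehat\alpha$, while the $\dim\le N$ hypothesis keeps the implied constant depending only on $N$.

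\emph{Reductions.} First I would pass to a finite subcover: the ``cores'' $O_U:=\{z\in Z\mid \Phi_{[-\widehat\alpha,\widehat\alpha]}(z)\subseteq U\}$ are open and cover $Z$ because $\calu$ is $\widehat\alpha$-long, so by compactness finitely many of them cover $Z$; the corresponding finite subfamily $\calu_0\subseteq\calu$ is still $\widehat\alpha$-long, still has dimension $\le N$, and a partition of unity subordinate to $\calu_0$ (extended by $0$) is subordinate to $\calu$. So assume $\calu$ finite. Then the continuous functions $z\mapsto d_Z(\Phi_{[-\widehat\alpha,\widehat\alpha]}(z),Z\setminus U)$, $U\in\calu$, have pointwise maximum that is continuous and strictly positive on the compact space $Z$, hence bounded below by some $\rho_0>0$ (allowed to depend on $\calu$); thus for every $z$ there is $U_z\in\calu$ with $d_Z(\Phi_{[-\widehat\alpha,\widehat\alpha]}(z),Z\setminus U_z)\ge\rho_0$.

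\emph{Construction.} Fix a continuous nondecreasing $m\colon[0,\infty)\to[0,1]$ with $m(x)=0$ for $x\le\rho_0/2$ and $m(x)=1$ for $x\ge\rho_0$, and put, for $z\in Z$ and $U\in\calu$,
\[
  \varphi_U(z):=\int_0^{\widehat\alpha} m\Bigl(\;\inf_{t\in[-s,s]} d_Z\bigl(\Phi_t(z),Z\setminus U\bigr)\Bigr)\,ds,\qquad \Sigma(z):=\sum_{U\in\calu}\varphi_U(z),\qquad t_U:=\varphi_U/\Sigma .
\]
Since $(z,s)\mapsto\inf_{|t|\le s} d_Z(\Phi_t(z),Z\setminus U)$ is continuous, $\varphi_U$ and $\Sigma$ are continuous. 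Writing $D^*_z(s):=\inf_{|t|\le s} d_Z(\Phi_t(z),Z\setminus U)$, which is nonincreasing in $s$, the integrand vanishes as soon as $D^*_z(s)$, in particular $D^*_z(0)=d_Z(z,Z\setminus U)$, is $\le\rho_0/2$; hence $\supp\varphi_U\subseteq\{z\mid d_Z(z,Z\setminus U)\ge\rho_0/2\}\subseteq U$, so $\{t_U\}$ is subordinate to $\calu$. For $U=U_z$ as above the integrand equals $1$ for all $s\in[0,\widehat\alpha]$, so $\varphi_{U_z}(z)=\widehat\alpha$ and therefore $\Sigma(z)\ge\widehat\alpha$ for all $z$; combined with continuity of $\varphi_U,\Sigma$ on the compact $Z$ this makes $\{t_U\}$ a genuine continuous partition of unity.

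\emph{Estimates and conclusion.} The key point is that $\varphi_U$ is $1$-Lipschitz along the flow with a bound not involving $\rho_0$ or $\widehat\alpha$: elementary interval inclusions give $D^*_z(s+|r|)\le D^*_{\Phi_r(z)}(s)\le D^*_z(s-|r|)$ for $s\ge|r|$, and since $m$ is nondecreasing a change of variables in the $s$-integral (the leftover range $s\in[0,|r|]$ contributing at most $|r|$ to each side) yields $|\varphi_U(\Phi_r(z))-\varphi_U(z)|\le|r|$. Because $\calu$ has dimension $\le N$, at each point at most $N+1$ of the $\varphi_U$ are nonzero, so $|\Sigma(\Phi_r(z))-\Sigma(z)|\le 2(N+1)|r|$. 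Feeding these into $|a/A-b/B|\le|a-b|/A+(b/B)\,|A-B|/A$ together with $\Sigma\ge\widehat\alpha$ gives $|t_U(\Phi_r(z))-t_U(z)|\le(2N+3)|r|/\widehat\alpha$ for all $U,z,r$; so one fixes $\widehat\alpha:=2(2N+3)\alpha/\epsilon$ at the outset, making this $\le\epsilon/2$ whenever $|r|\le\alpha$. For the transversal direction, each $t_U$ is uniformly continuous on the compact $Z$ and $\calu$ is finite, so there is $\delta>0$ with $d_Z(w,w')<\delta\Rightarrow|t_U(w)-t_U(w')|<\epsilon/2$ for all $U$. Then if $\fold{\FS}(z,z')<(\alpha,\delta)$, i.e. $d_Z(\Phi_s(z),z')<\delta$ for some $s\in[-\alpha,\alpha]$, we get $|t_U(z)-t_U(z')|\le|t_U(z)-t_U(\Phi_s(z))|+|t_U(\Phi_s(z))-t_U(z')|<\epsilon/2+\epsilon/2=\epsilon$. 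The main obstacle, and the reason for the somewhat indirect definition of $\varphi_U$, is to make the flow--variation bound have a constant depending only on $N$ and not on the fineness of $\calu$: integrating a \emph{soft} (hence continuous) indicator of ``$\Phi_{[-s,s]}(z)$ stays $\rho_0$-deep in $U$'' over $s\in[0,\widehat\alpha]$ is precisely what produces both the $\rho_0$- and $\widehat\alpha$-independent Lipschitz bound $|\varphi_U(\Phi_r(z))-\varphi_U(z)|\le|r|$ and, via $\widehat\alpha$-longness, the uniform lower bound $\Sigma\ge\widehat\alpha$, with $\dim\le N$ controlling the denominator; the small-$\delta$ (transversal) part is then routine compactness.
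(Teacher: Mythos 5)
Your argument is correct and follows the same overall route as the paper: fix $\widehat\alpha$ with $(2N+3)\alpha/\widehat\alpha<\epsilon/2$, pass to a finite subcover via the cores $U^{-\widehat\alpha}=\{z\mid \Phi_{[-\widehat\alpha,\widehat\alpha]}(z)\subseteq U\}$, build for each $U$ a continuous bump function supported in $U$ whose variation along the flow is $O(|r|/\widehat\alpha)$ and which is maximal at every $z$ for at least one $U$, then normalize and use that at most $N+1$ terms are nonzero to get $|t_U(\Phi_r z)-t_U(z)|\le(2N+3)|r|/\widehat\alpha$, finishing with uniform continuity for the transversal $\delta$. The one genuine difference is the construction of the bump functions: the paper proves a separate Lemma (\ref{lem:slow-bump-fct}) giving $f(z)=\sup_{t\in[-\widehat\alpha,\widehat\alpha]}(1-|t|/\widehat\alpha)\varphi(\Phi_t z)$ and verifies continuity of the sup by a compactness/contradiction argument, whereas you integrate a soft indicator, $\varphi_U(z)=\int_0^{\widehat\alpha}m(\inf_{|t|\le s}d_Z(\Phi_t z,Z\setminus U))\,ds$. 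Your version makes continuity and the flow-Lipschitz bound ($|\varphi_U(\Phi_r z)-\varphi_U(z)|\le|r|$, via the monotone interval comparison $D^*_z(s+|r|)\le D^*_{\Phi_r z}(s)\le D^*_z(s-|r|)$ plus a change of variables) fall out immediately, at the cost of baking in the parameter $\rho_0$ that the paper's separate lemma avoids; either way the parameter-dependence is confined to $\delta$ and does not affect $\widehat\alpha$, which depends only on $N,\alpha,\epsilon$, exactly as required by the quantifier structure.
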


\begin{remark}
  Using quantifiers the beginning of Proposition~\ref{prop:partition-no-G} reads as
  \begin{equation*}
    \forall \alpha, \epsilon, N\; \exists \widehat\alpha\; \forall \, \calu\;
    \exists \{t_U\}, \delta > 0\; \forall \, U, z,z'\;	\text{we have}\ldots 
  \end{equation*}
\end{remark}

\begin{lemma}\label{lem:slow-bump-fct}
  Let $\widehat \alpha > 0$.  
  Let $K \subseteq Z$ be compact and $U$ be an open neighborhood of $K$.
  Then there exists a (continuous) map $f \colon Z \to [0,1]$ satisfying
  \begin{enumerate}
  	\item\label{lem:slow-bump-fct:K} $f|_K \equiv 1$;
  	\item\label{lem:slow-bump-fct:not-K} $f|_{Z \setminus \Phi_{[-\widehat\alpha,\widehat\alpha]}U}  \equiv 0$;
  	\item\label{lem:slow-bump-fct:slow} for $s\in \IR$, $z \in Z$
          we have $| f(\Phi_s(z)) - f(z)| \leq \frac{|s|}{\widehat\alpha}$. 
  \end{enumerate}	
\end{lemma}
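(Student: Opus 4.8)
The plan is to obtain $f$ by spreading a crude Urysohn bump function along the flow and damping it with a tent function in the flow parameter. First I would choose (by Urysohn's lemma, or as the explicit quotient of distance functions in the metric space $Z$) a continuous $\chi\colon Z\to[0,1]$ with $\chi|_K\equiv 1$ and $\chi|_{Z\setminus U}\equiv 0$; in the degenerate case $U=Z$ simply take $\chi\equiv 1$. Put $\lambda\colon\IR\to[0,1]$, $\lambda(s):=\max\{0,\,1-|s|/\widehat\alpha\}$, which is $1/\widehat\alpha$-Lipschitz, and define
\[
  f(z):=\sup_{s\in\IR}\lambda(s)\,\chi\bigl(\Phi_s(z)\bigr)=\max_{s\in[-\widehat\alpha,\widehat\alpha]}\lambda(s)\,\chi\bigl(\Phi_s(z)\bigr),
\]
the two expressions agreeing since $\lambda$ vanishes outside $[-\widehat\alpha,\widehat\alpha]$. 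By construction $0\le f\le 1$.

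Next I would verify the three listed properties and continuity. For~\ref{lem:slow-bump-fct:K}: if $z\in K$ the term $s=0$ equals $\lambda(0)\chi(z)=1$, so $f(z)=1$. For~\ref{lem:slow-bump-fct:not-K}: if $z\notin\Phi_{[-\widehat\alpha,\widehat\alpha]}U$ then for every $s\in[-\widehat\alpha,\widehat\alpha]$ we have $\Phi_s(z)\notin U$ (otherwise $z\in\Phi_{-s}(U)\subseteq\Phi_{[-\widehat\alpha,\widehat\alpha]}U$), hence $\chi(\Phi_s(z))=0$ and $f(z)=0$. For~\ref{lem:slow-bump-fct:slow}: reindexing by $u=s+\sigma$ gives $f(\Phi_\sigma(z))=\sup_{u\in\IR}\lambda(u-\sigma)\,\chi(\Phi_u(z))$, and since $\lambda(u-\sigma)\le\lambda(u)+|\sigma|/\widehat\alpha$ while $0\le\chi(\Phi_u(z))\le 1$, each term is $\le\lambda(u)\chi(\Phi_u(z))+|\sigma|/\widehat\alpha$; taking the supremum over $u$ yields $f(\Phi_\sigma(z))\le f(z)+|\sigma|/\widehat\alpha$, and applying this with $(\Phi_\sigma(z),-\sigma)$ in place of $(z,\sigma)$ gives the reverse inequality. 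For continuity, the map $(z,s)\mapsto\lambda(s)\chi(\Phi_s(z))$ is continuous on the compact space $Z\times[-\widehat\alpha,\widehat\alpha]$, hence uniformly continuous, so $|f(z)-f(z')|\le\sup_{s}\bigl|\lambda(s)\chi(\Phi_s(z))-\lambda(s)\chi(\Phi_s(z'))\bigr|$ shows that $f$ is (uniformly) continuous; here one uses that the flow $\Phi$ on $Z$ is jointly continuous, which is part of the standing hypotheses of Proposition~\ref{prop:partition-no-G}.

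I do not expect a genuine obstacle here; the construction is essentially the classical ``slow partition of unity'' trick. The only points that need a little care are that $\chi$ must vanish on \emph{all} of $Z\setminus U$ (so that $s\mapsto\chi(\Phi_s(z))$ is supported on the set where $\Phi_s(z)\in U$, which is what makes~\ref{lem:slow-bump-fct:not-K} work), the degenerate case $U=Z$ where the distance-function formula for $\chi$ breaks down, and recording the joint continuity of the flow so that the supremum over the compact interval $[-\widehat\alpha,\widehat\alpha]$ defining $f$ depends continuously on $z$.
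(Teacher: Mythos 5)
Your proposal is correct and constructs exactly the same function as the paper: writing $F(z,t)=\lambda(t)\varphi(\Phi_t(z))$ the paper also sets $f(z)=\sup_t F(z,t)$ with $\varphi$ a Urysohn function for $K\subseteq U$. Your verifications of continuity (via uniform continuity on the compact $Z\times[-\widehat\alpha,\widehat\alpha]$ and the inequality $|\sup g-\sup h|\le\sup|g-h|$) and of property~\ref{lem:slow-bump-fct:slow} (via the Lipschitz bound $\lambda(u-\sigma)\le\lambda(u)+|\sigma|/\widehat\alpha$ and reindexing) are a little more streamlined than the paper's sequential contradiction argument and case analysis on whether $t$, $t+s$ lie in $[-\widehat\alpha,\widehat\alpha]$, but the underlying idea is the same.
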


\begin{proof}
  As $Z$ is a metric space, there is $\varphi \colon Z \to [0,1]$ with
  $\varphi|_K \equiv 1$ and $\varphi|_{Z \setminus U} \equiv 0$.
  Define $F \colon Z \times \IR \to [0,1]$ by
  \begin{equation*}
    F(z,t) := \begin{cases} (1 - \frac{|t|}{\widehat\alpha} )\varphi(\Phi_t(z))
      & t \in [-\widehat\alpha,\widehat\alpha];
      \\ 0
      & \text{else}. \end{cases} 
  \end{equation*}
 Put
  \begin{equation*}
    f(z) := \sup \{ F(z,t) \mid t \in \IR\} = \max \{ F(z,t) \mid t \in [-\widehat\alpha,\widehat\alpha] \}.
  \end{equation*}
  We verify that $f$ is continuous.  Assume it is not.  Then there exist $\epsilon > 0$,
  $z \in Z$, and a sequence $(z_n)_{n \ge 0}$ in $Z$ such that $z_n \to z$ in $Z$ and
  $|f(z_n) - f(z)| > \epsilon$ holds for all $n \ge 0$.  Choose
  $\tau_n \in [-\widehat\alpha,\widehat\alpha]$ with $f(z_n) = F(z_n,\tau_n)$.  We can
  arrange after passing to subsequences that there exists
  $\tau \in [-\widehat\alpha,\widehat\alpha]$ satisfying $\lim_{n \to \infty} \tau_n = \tau$.  Then
  $f(z_n) = F(z_n,\tau_n) \to F(z,\tau) \leq f(z)$.  Hence there is a natural number $N$
  such that $f(z_n) < f(z) + \epsilon$ holds for $n \ge N$. On the other hand, choose
  $\tau' \in [-\widehat\alpha,\widehat\alpha]$ with $f(z) = F(z,\tau')$.  Then
  $f(z_n) \geq F(z_n,\tau') \to f(z)$ and hence there is a natural number $N'$ such that
  $f(z_n) > f(z) - \epsilon$ for $n \ge N'$. This is a contradiction.

  It remains to check~\ref{lem:slow-bump-fct:K},~\ref{lem:slow-bump-fct:not-K}
  and~\ref{lem:slow-bump-fct:slow}.  \\[1ex]~\ref{lem:slow-bump-fct:K} Let $z \in
  K$. Then $F(z,0) = \varphi(z) = 1$. Thus $f(z) = 1$.
  \\[1ex]~\ref{lem:slow-bump-fct:not-K} Let
  $z \in Z \setminus \Phi_{[-\widehat\alpha,\widehat\alpha]}(U)$. Then
  $\Phi_t(z) \not\in U$ for all $t \in [-\widehat\alpha,\widehat\alpha]$. Thus
  $F(z,t) = 0$ for all $t \in [-\widehat\alpha,\widehat\alpha]$. Therefore $f(z) = 0$.
  \\[1ex]~\ref{lem:slow-bump-fct:slow}  
  Next we show for all $s,t \in \IR$ and $z \in Z$.
  \begin{equation}
    \Big|F(\Phi_s(z),t) - F(z,t)\big| \le  \frac{|s|}{\widehat\alpha}.
   \label{lem:slow-bump-fct:auxiliary_1}
  \end{equation}
  If $s$ and $s+t$ belong to $[-\widehat\alpha,\widehat\alpha]$, this follows from
  \begin{eqnarray*}
  \bigl|F(\Phi_s(z),t) - F(z,t+s)\bigr|
   & = &
          \left|(1-\frac{|t|}{\widehat\alpha})\varphi(\Phi_{t}(\Phi_{s}(z)))
          - (1-\frac{|t+s|}{\widehat\alpha})\varphi(\Phi_{t+s}(z) )\right|
    \\
    & = &
          \left|(1-\frac{|t|}{\widehat\alpha})\varphi(\Phi_{t+s}(z))
          - (1-\frac{|t+s|}{\widehat\alpha})\varphi(\Phi_{t+s}(z) )\right|
    \\
    & = &
    \left|\frac{|t| - |t+s|}{\widehat\alpha}\varphi(\Phi_{t+s}(z))\right|
    \\
    & = &
   \frac{\bigl||t| - |t+s|\bigr|}{\widehat\alpha}\varphi(\Phi_{t+s}(z))
    \\
    & \le  &
             \frac{\bigl||t| - |t+s|\bigr|}{\widehat\alpha}
    \\
    & \le &
     \frac{|s|}{\widehat\alpha}.
  \end{eqnarray*}
  Suppose that $s \in [-\widehat\alpha,\widehat\alpha]$ and
  $(s+t) \notin [-\widehat\alpha,\widehat\alpha]$.  Then $F(z,t+s) = 0$ and
  $\widehat \alpha \le |t+ s| \le|s| + [t]$. Now\eqref {lem:slow-bump-fct:auxiliary_1}
  follows from
  \[
  \bigl|F(\Phi_s(z),t)|
   = 
   (1-\frac{|t|}{\widehat\alpha})\varphi(\Phi_{t}(z))
   \le
     \frac{\widehat \alpha - |t|}{\widehat\alpha}
    \le
    \frac{|s|}{\widehat\alpha}.
  \]
  Suppose that $(s + t) \in [-\widehat\alpha,\widehat\alpha]$ and
  $t \notin [-\widehat\alpha,\widehat\alpha]$.  Then $F(\Phi_s(z),t) = 0$ and
  $\widehat \alpha \le |t| = |(s+t) - s| \le |s + t| + |s|$.  Now~\eqref
  {lem:slow-bump-fct:auxiliary_1} follows from
  \[
  \bigl|F((z),s + t)|
   = 
   (1-\frac{|s+t|}{\widehat\alpha})\varphi(z)
   \le
     \frac{\widehat \alpha - |s + t|}{\widehat\alpha}
    \le
    \frac{|s|}{\widehat\alpha}.
  \]
  If $(s + t) \notin [-\widehat\alpha,\widehat\alpha]$ and
  $t \notin [-\widehat\alpha,\widehat\alpha]$, then $F(\Phi_s(z),t) = F(z,t+s) = 0$ and
  hence~\eqref {lem:slow-bump-fct:auxiliary_1} is true.  This finishes the proof of~\eqref
  {lem:slow-bump-fct:auxiliary_1}.
  
  From the definitions we conclude that there exists $t_0$ and $t_1 \in \IR$ such that for
  all $t \in \IR$ we have
  \begin{eqnarray}
    f(\Phi_s(z))
     & = &  
     F(\Phi_s(z),t_0);
    \label{lem:slow-bump-fct:auxiliary_2}
    \\
    F(\Phi_s(z),t)
     & \le &
    F(\Phi_s(z),t_0);
   \label{lem:slow-bump-fct:auxiliary_3}
    \\
    f(z)
     & = &  
     F(z,t_1);
    \label{lem:slow-bump-fct:auxiliary_4}
    \\
    F(z,t)
     & \le &
    F(z,t_1).
   \label{lem:slow-bump-fct:auxiliary_5}
    \end{eqnarray}
  We estimate
  \[
    f(z)
    \stackrel{\eqref{lem:slow-bump-fct:auxiliary_4}}{=} 
    F(z,t_1)
      \stackrel{\eqref{lem:slow-bump-fct:auxiliary_1}}{\le} 
     F(\Phi_s(z),t_1 - s)  + \frac{|s|}{\widehat\alpha}
        \stackrel{\eqref{lem:slow-bump-fct:auxiliary_3}}{\le } 
    F(\Phi_s(z),t_0)  + \frac{|s|}{\widehat\alpha}
        \stackrel{\eqref{lem:slow-bump-fct:auxiliary_2}}{=} 
    f(\Phi_s(z)) + \frac{|s|}{\widehat\alpha},
 \]                                                     
and
\[
    f(\Phi_s(z))
    \stackrel{\eqref{lem:slow-bump-fct:auxiliary_2}}{=} 
    F(\Phi_s(z),t_0)
      \stackrel{\eqref{lem:slow-bump-fct:auxiliary_1}}{\le} 
     F(z,t_0 + s)  + \frac{|s|}{\widehat\alpha}
        \stackrel{\eqref{lem:slow-bump-fct:auxiliary_5}}{\le } 
    F(z,t_1)  + \frac{|s|}{\widehat\alpha}
        \stackrel{\eqref{lem:slow-bump-fct:auxiliary_4}}{\le } 
    f(z) + \frac{|s|}{\widehat\alpha}.
  \]
  This finishes the proof of Lemma~\ref{lem:slow-bump-fct}.
\end{proof}

      \begin{proof}[Proof of Proposition~\ref{prop:partition-no-G}]
        Let $\alpha,\epsilon$ and $N$ be given.  Pick $\widehat\alpha > 0$ such that
        \[
          \frac{(2N+3)\alpha}{\widehat\alpha} < \epsilon/2.
        \]
        For $U \in \calu$ let
        $U^{-\widehat\alpha} := \{ z \in Z \mid \Phi_{[-\widehat\alpha,\widehat\alpha]}(z)
        \subseteq U \}$.  As $\calu$ is $\widehat\alpha$-long, we can find for every
        $z \in Z$ an open neighborhood $U_0(z)$ and an element $U(z) \in \calu$ such that
        $U_0(z) \subseteq U^{-\widehat\alpha}$.  Since $Z$ is compact, we can find a
        finite subset $I \subseteq Z$ such that $Z = \bigcup_{z \in Z} U_0(z)$.  By
        replacing $\calu$ with $\{U(z) \mid z \in I\}$, we can arrange that both $\calu$
        and $\calu^{-\widehat\alpha} := \{ U^{-\widehat\alpha} \mid U \in \calu\}$ are
        $N$-dimensional finite coverings of $Z$.  As $Z$ is compact we can find a Lebesgue
        number $\ell > 0$ for $\calu$. Define a compact subset $K_U \subseteq U$ by
        $K_U = \{z \in Z \mid d_Z(z,Z \setminus U) \ge \ell \}$.
        Then  $\{K_U \mid U \in \calu\}$ covers $Z$.  For each $U \in \calu$ we now choose $f_U$
        as in Lemma~\ref{lem:slow-bump-fct}, i.e., such that
    \refstepcounter{theorem}    
	\begin{enumerate}[label=(\thetheorem\alph*),leftmargin=*]
  	   \item\label{nl:1onK} $f_U|_{K_U} \equiv 1$;
  	   \item\label{nl:0notonU} $f_U|_{Z \setminus U}  \equiv 0$;
  	   \item\label{nl:slow} for $\tau \in \IR$, $z \in Z$ we have
             $| f_U(z) - f_U(\Phi_\tau(z)) | \leq \frac{|\tau|}{\widehat\alpha}$. 
    \end{enumerate}	
	As $\calu$ is finite, we can normalize the $f_U$ to obtain $t_U \colon Z \to [0,1]$ with
	\begin{equation*}
		t_U (z) := \frac{f_U(z)}{\sum_{U' \in \calu} f_{U'}(z)}.
	\end{equation*}
	Then $\{ t_U \mid U \in \calu \}$ is a partition of unity subordinate to $\calu$.
        For $\tau \in [-\alpha,\alpha]$ and $z \in Z$, we next want to estimate
        $t_U(z) - t_U(\Phi_\tau(z))$.  We abbreviate $x_V := f_V(z)$,
        $x'_V := f_V(\Phi_\tau(u))$ for $V \in \calu$.  By~\ref{nl:1onK} $x_V = 1$,
        $x'_{V'}=1$ for at least one $V,V'$.  In particular
        $\sum_V x_V,\sum_V x'_V \geq 1$.  By~\ref{nl:0notonU} and since the dimension of
        $\calu$ is at most $N$, we have $x_V \neq 0$ for at most $N+1$ different
        $V \in \calv$, and similarly for $x'_V$.  By~\ref{nl:slow}
        $|x_V-x'_V| \leq \frac{\tau}{\widehat\alpha}$.  Using all this we compute for
        $\tau \in [-\alpha,\alpha]$ and $z \in Z$,
        where $V$ and $V'$ run  through $\calu$:
	\begin{eqnarray*}
                \lefteqn{|t_U(z) - t_U(\Phi_\tau(z))|} & & \\
                & = & \Bigg| \frac{x_U}{\sum_V x_V} - \frac{x'_{U}}{\sum_{V'} x'_{V'}} \Bigg| \\
		& =  & \Bigg| \frac{\sum_{V'}x_U x'_{V'}}{\sum_{V,V'} x_V x'_{V'}} - \frac{\sum_{V}x'_U x_{V}}{\sum_{V,V'} x_V x'_{V'}} \Bigg| \\
		& \leq  & \frac{\sum_{V} \big| x_U x'_{V} - x_Ux_V + x_Ux_V - x'_Ux_V \big|}{\sum_{V,V'} x_V x'_{V'}} \\ 
		& \leq  & \frac{\sum_{V}x_U \big|x'_{V} - x_V \big| + \big|x_U - x'_U\big|x_V}{\sum_{V,V'} x_V x'_{V'}} \\
		& \leq & \frac{\sum_V \big|x_V-x'_V\big|}{\sum_{V,V'} x_V x'_{V'}}  + \frac{\big|x_U-x'_U\big|}{\sum_{V'} x'_{V'}} \\
		& \leq & {\sum_V \big|x_V-x'_V\big|}  + {\big|x_U-x'_U\big|} \\
                & \leq & \bigl(\bigl|\{V \in \calu \mid x_V \not= 0\}| + \bigl|\{V \in \calu \mid x'_V \not= 0\}| +1 \bigr)
                               \cdot \max\{\big|x_V-x'_V\big| \mid V,V' \in \calu\}\\
		& \leq & (2(N+1)+1) \frac{\tau}{\widehat\alpha} < \frac{\epsilon}{2}.
	\end{eqnarray*}
	As $Z$ is compact the $t_U$ are uniformly continuous. 
	Since $\calu$ is finite, there is $\delta >0$ such that for $U \in \calu$, $z,z' \in Z$, we have
	\begin{equation*}
		d_Z(z,z') < \delta \; \implies \; |t_U(z)-t_U(z')| < \frac{\epsilon}{2}.
	\end{equation*} 
	Thus 
    \begin{equation*}
     	 \fold{\FS}(z,z') < (\alpha,\delta) \; \implies \;  	   |t_U(z) - t_U(z')| < \epsilon.  \qedhere
    \end{equation*}	  
\end{proof}


\typeout{------------------- Section 8:  Dimension of long thin covers -----------------}

\section{Dimension of long thin covers}\label{sec:Dimension_of_long_thin_covers}

In this section we finish the proof of Proposition~\ref{prop:dimension} by
proving Proposition~\ref{prop:long-thin-cover-non-equivariant}
below. 

\begin{proposition}\label{prop:long-thin-cover-non-equivariant}
  There is $N$ such that for any $\alpha > 0$ there is $\widehat \alpha > 0$ such that the
  following is true.  Let $\calw$ be an $\widehat \alpha$-long cover of $G \backslash \FS$
  by open subsets.  Then there exists collections $\calu_0,\ldots,\calu_N$ of open subsets
  of $G \backslash \FS$ such that
   \begin{enumerate}
   \item $\calu := \calu_0 \sqcup \ldots \sqcup \calu_N$ is an $\alpha$-long cover of
     $G \backslash \FS$;
   \item for each $i$ the open sets in $\calu_i$ are pairwise disjoint;
   \item for each $U \in \calu = \calu_0 \sqcup \ldots \sqcup \calu_N$ there is
     $W \in \calw$ with $U \subseteq W$.
   \end{enumerate}
\end{proposition}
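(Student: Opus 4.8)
The plan is to forget the group action and work purely with the compact quotient $Y := G\backslash\FS$ equipped with the induced flow $\overline\Phi$; the $G$-equivariant form of Proposition~\ref{prop:dimension} is then recovered from the non-equivariant statement on $Y$ by Lemma~\ref{lem:passing_to_the_orbit_space}. First I would record the two structural facts about $Y$ that the whole argument rests on: by Lemmas~\ref{lem:FS-is-proper} and~\ref{lem:G-on-FS-proper} the space $Y$ is compact and metrizable, and since $\FS = \FS(X)$ has finite covering dimension (see~\cite{Bartels-Lueck(2012CAT(0)flow)}) and the $G$-action on $\FS$ is smooth and proper, the quotient $Y$ again has finite covering dimension; write $d := \dim Y$. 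The integer $N$ in the proposition will be fixed as a quantity depending only on $d$ (so only on the geometry of $X$) — this is what keeps $N$ independent of $\alpha$ and of the cover $\calw$. From here the construction follows the scheme for long thin covers of flow spaces in~\cite{Bartels-Lueck-Reich(2008cover),Kasprowski-Rueping(2017long-thin)}, now run on the compact space $Y$.

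Given $\alpha$ I would fix an auxiliary length $\gamma \gg \alpha$ and in the end set $\widehat\alpha := \gamma$ (enlarged if needed). The heart of the matter is the standard dichotomy between the periodic and the non-periodic part of the flow. Let $P \subseteq Y$ be the closure of the set of images of periodic geodesics $c$ with $\tau_c \le \gamma$; this is compact, using Lemma~\ref{lem:about-tau_c} and properness of the $G$-action. Choose nested open neighbourhoods $P \subseteq B_- \subseteq \overline{B_-} \subseteq B$ and put $A := Y \setminus \overline{B_-}$, so $\{A,B\}$ covers $Y$. On $A$, for $\gamma$ chosen suitably, every orbit segment $t \mapsto \overline\Phi_t(z)$ with $t \in [-\gamma,\gamma]$ is embedded and admits a uniform transversal, so $A$ is covered by finitely many flow boxes, i.e.\ open sets $O \subseteq A$ with homeomorphisms $O \cong S \times (-\gamma,\gamma)$, $\dim S \le d-1$, under which the flow becomes translation in the second coordinate; on $B$, near a periodic orbit of period $p = \tau_c \le \gamma$ one has instead a mapping-torus chart $S \times (\IR/p\IZ)$, again with $\dim S \le d-1$.

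In each local model I would build an $\alpha$-long cover of controlled dimension by the usual recipe: cover the transversal $S$ by a finite open family that refines into at most $d$ subfamilies of pairwise disjoint sets — possible because $\dim S \le d-1$, by the standard characterisation of covering dimension — then multiply these with the members of an open tiling of the flow direction by intervals of length comparable to $\alpha$ (for a very short circle, $p \le \alpha$, use the whole circle as the single ``interval''), splitting each interval tiling into its even- and odd-indexed parts so that disjointness inside each subfamily is preserved. Carrying this out over a finite subatlas of flow boxes for $A$ and a finite collection of mapping-torus charts covering $B$, discarding the pieces that fall into the wrong region, and amalgamating, produces an open cover $\calu$ of $Y$ organised as $\calu = \calu_0 \sqcup \dots \sqcup \calu_N$ into families of pairwise disjoint sets, $N$ being a fixed multiple of $d$. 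Choosing the buffer between $B_-$ and $B$ to be $\overline\Phi_{[-\alpha,\alpha]}$-stable guarantees that for each $z$ the whole segment $\overline\Phi_{[-\alpha,\alpha]}(z)$ lies either in $A$ or in $B$, which is what makes $\calu$ an $\alpha$-long cover. For the last requirement — each $U \in \calu$ inside some $W \in \calw$ — I would use that $\calw$ is $\widehat\alpha$-long together with uniform continuity of the flow (Lemma~\ref{lem:unif-cont-flow}) to get, for each $z$, an open $O_z \ni z$ with $\overline\Phi_{[-\widehat\alpha,\widehat\alpha]}(O_z) \subseteq W$ for a single $W \in \calw$, and then insist that all transversal slices above are chosen inside such $O_z$'s, with $\gamma = \widehat\alpha$ large; since each $U \in \calu$ sits inside the flow tube over its slice, it then lies inside one $W$.

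The step I expect to be the main obstacle — and the one where the cited machinery has to be genuinely adapted rather than quoted — is the treatment of the periodic region $B$ together with the dimension bookkeeping in the amalgamation. Two difficulties stand out. Without Assumption~\ref{assum:good-FS_0} the periodic orbits of period $\le \gamma$ need not fall into finitely many congruence classes and can have arbitrarily small period, so one cannot reduce $P$ to finitely many circles of bounded geometry; one must instead exploit only the compactness of $P$ to obtain a finite mapping-torus atlas and then verify directly that the resulting cover is $\alpha$-long even along a circle of tiny period, where $\overline\Phi_{[-\alpha,\alpha]}(z)$ wraps all the way around — here it is essential that $\alpha$-longness asks only for the flow segment $\overline\Phi_{[-\alpha,\alpha]}(z)$, and not a transverse tube around it, to be swallowed by a single member of $\calu$. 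Second, amalgamating the families coming from $A$ with those coming from $B$ — two finite open families, each a union of a bounded number of pairwise disjoint subfamilies — into one family that is still a union of $O(d)$ pairwise disjoint subfamilies, still refines $\calw$, and still has the long-thin property is the combinatorial crux; this merging, along with the construction of the flow-box atlas with transversal dimension $\le d-1$, is precisely what is imported from~\cite{Bartels-Lueck-Reich(2008cover),Kasprowski-Rueping(2017long-thin)} and must be re-examined in the quotient $Y = G\backslash\FS$.
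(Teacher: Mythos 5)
Your overall plan is a genuinely different route from the paper's, and it contains two real gaps.

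The paper factors the proof into two quotable pieces: Proposition~\ref{prop:long-thin-not-colored} invokes Kasprowski--R\"uping as a black box on the compact metrizable finite-dimensional quotient $G\backslash\FS$ to produce a $\beta$-long cover of dimension $\leq N$ refining $\calw$ (and then inspects the last lines of their proofs to see that refinement is in fact achieved); Lemma~\ref{lem:color} then \emph{colors} that cover by mapping to the nerve, passing to the barycentric subdivision, and pulling back the stars of the subdivision vertices, grouped by the dimension of the original simplex. You instead try to build the colored cover in one shot from local charts --- flow boxes over the wandering part and mapping-torus charts near short closed orbits --- and to organize the coloring by hand through a parity trick on interval tilings and a merge of the two regional families. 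That is a different decomposition of the problem.

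Two steps in your argument do not go through as stated. First, you assert that the $G$-action on $\FS$ is smooth and use this to pass finite-dimensionality to the quotient. It is not smooth (the paper says so explicitly in the proof of Lemma~\ref{lem:G-mod-FS-dim-compact}): the stabilizer of a generalized geodesic $c$ need not be open, because a small perturbation of $g$ can move $c$ to a nearby geodesic without fixing it. The paper therefore cannot quote the ``open map with discrete fibers preserves dimension'' fact directly, and instead retracts onto the closed subspaces $\FS_R$ of geodesics that are locally constant outside $[-R,R]$; the $G$-action on $\FS_R$ \emph{is} smooth, $\dim(G\backslash\FS_R) \leq \dim \FS_R \leq \dim \FS$, and the retractions $\overline p_R \colon G\backslash\FS \to G\backslash\FS_R$ have fibers of diameter tending to $0$, whence $\dim(G\backslash\FS) \leq \dim\FS$ by a Lebesgue-number argument. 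Your quick dimension count for $Y$ is thus broken and needs this extra retraction step. Second, your local structure claim --- that away from short periodic orbits one can cover $Y$ by flow boxes $O \cong S \times (-\gamma,\gamma)$ with $\dim S \leq d-1$, and near short periodic orbits by mapping-torus charts with the same transversal dimension bound --- is not a theorem about general compact metrizable flow spaces of finite covering dimension. The dimension of a transversal is not automatically one less than that of the ambient space outside the manifold setting, and nothing in the paper or in Kasprowski--R\"uping supplies such a chart structure on the quotient $G\backslash\FS$; the KR argument is combinatorial and deliberately avoids requiring transversals at all. So the step you label as ``imported'' is not actually imported: what is available from~\cite{Kasprowski-Rueping(2017cov)} is a bounded-dimension long cover, not a flow-box atlas, and without that atlas your amalgamation and coloring cannot get started. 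The cleanest repair is exactly the paper's: take KR's bounded-dimension cover and color it afterwards via the nerve and its barycentric subdivision, which needs only the dimension bound and compactness, not any local product structure.
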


\begin{remark}
  Using quantifiers the beginning of
  Proposition~\ref{prop:long-thin-cover-non-equivariant} reads as
  \begin{equation*}
    \exists N\; \forall \alpha\; \exists \widehat\alpha\; \forall \calw\;
    \exists \calu_0,\ldots,\calu_N\; 	\text{such that} \ldots 
  \end{equation*}
\end{remark}

\begin{proof}[Proof of Proposition~\ref{prop:long-thin-cover-non-equivariant}]
  This follows by combining  Proposition~\ref{prop:long-thin-not-colored}
  and Lemma~\ref{lem:color} below.
\end{proof}

\begin{lemma}\label{lem:G-mod-FS-dim-compact}
	$G \backslash \FS$ is of finite dimension, compact and metrizable.
\end{lemma}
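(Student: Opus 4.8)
The plan is to read off all three properties from Lemmas~\ref{lem:FS-is-proper} and~\ref{lem:G-on-FS-proper}, which together tell us that $\FS=\FS(X)$ is a proper metric space --- here $X$ is proper, being a locally compact space with a cocompact proper isometric action --- on which $G$ acts properly, cocompactly and isometrically. Write $p\colon\FS\to G\backslash\FS$ for the projection, which is open as all orbit projections are. Compactness of $G\backslash\FS$ is then immediate: cocompactness of the action gives a compact $C\subseteq\FS$ with $G\cdot C=\FS$, and hence $p(C)=G\backslash\FS$.

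For metrizability I would observe that $G\backslash\FS$ is Hausdorff because the $G$-action is proper, and second countable because $p$ is an open continuous surjection and $\FS$, being a proper metric space, is second countable; a compact Hausdorff second countable space is metrizable by Urysohn's metrization theorem. (Alternatively, $\bar d(Gc,Gc'):=\inf_{g\in G}d_\FS(c,gc')$ defines a metric on $G\backslash\FS$ inducing the quotient topology, positive-definiteness following from the fact that orbits of proper actions are closed.)

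Finite covering dimension is the crux, and I expect it to be the only real obstacle; compactness and metrizability are soft. I would first invoke~\cite{Bartels-Lueck(2012CAT(0)flow)} for the fact that the flow space $\FS$ itself has finite covering dimension, bounded in terms of $\dim X$: this is where the $\CAT(0)$-geometry enters, the point being that a generalized geodesic is determined by a bounded amount of data in $X$ (uniqueness of geodesics in $X$ pins a segment down by its endpoints, and the infinite ends are controlled by the visual boundary of $X$). It then remains to see that the projection $p$ does not raise covering dimension. For this I would use that $G$ is totally disconnected and locally compact, so that by van Dantzig's theorem it has a neighborhood basis of the identity consisting of compact open subgroups and in particular $\dim G=0$; feeding this into a slice theorem for proper actions of locally compact groups, together with the classical fact that orbit spaces of compact group actions on finite-dimensional metrizable spaces do not increase covering dimension, gives $\dim(G\backslash\FS)\le\dim\FS<\infty$. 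I would cite these two ingredients from~\cite{Bartels-Lueck(2012CAT(0)flow)} and~\cite{Bartels-Lueck-Reich(2008cover)} rather than reprove them here.
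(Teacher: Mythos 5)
Your treatment of compactness and metrizability is fine and matches the paper: cocompactness gives a compact $C$ with $p(C)=G\backslash\FS$, and the explicit metric $d_{G\backslash\FS}([c],[c'])=\min_{g\in G}d_\FS(gc,c')$ (which the paper also writes down) handles metrizability. A minor citation slip: \cite{Bartels-Lueck(2012CAT(0)flow)} does not state that $\FS$ itself has finite covering dimension; Prop.~2.9 there only gives finite dimension for $\FS\setminus\FS^\IR$, and one has to add that $\FS^\IR\cong X$ is finite-dimensional and then invoke the sum theorem, as the paper does.

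The real problem is in the finite-dimension step, where you assert as a ``classical fact'' that orbit spaces of compact group actions on finite-dimensional metrizable spaces do not raise covering dimension. For compact \emph{Lie} groups this is fine, but for general compact (e.g.\ profinite) groups acting on non-manifold spaces it is not a theorem you can just cite -- this is precisely the circle of questions around open maps with zero-dimensional fibers (Kolmogorov--Keldysh examples show such maps can raise dimension) and, in the effective case on manifolds, Yang's theorem gives $\dim(M/K)\ge\dim M+2$ for non-Lie $\dim$-zero compact $K$. So the slice-theorem-plus-compact-quotient route has a genuine gap, and the references you propose (\cite{Bartels-Lueck(2012CAT(0)flow)}, \cite{Bartels-Lueck-Reich(2008cover)}) do not contain the needed statement. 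The paper is aware of this difficulty and engineers around it: it observes that on the closed subspaces $\FS_R$ (geodesics locally constant outside $[-R,R]$) the $G$-action \emph{is} smooth, so $\FS_R\to G\backslash\FS_R$ is an open map with \emph{discrete} fibers and one can apply \cite[Thm.~1.12.7]{Engelking(1978)}; it then exploits the retractions $p_R\colon\FS\to\FS_R$ with fibers of diameter $\epsilon_R\to 0$ to transfer finite-dimensional covers of $G\backslash\FS_R$ back to $G\backslash\FS$. That approximation argument is the content you would need to replace your unsupported quotient-dimension claim.
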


\begin{proof} Recall that $G$ acts on $\FS$ cocompactly, isometrically, and properly and that
  $\FS$ is a proper metric space, see Lemmas~\ref{lem:FS-is-proper}, 
  and Lemma~\ref{lem:G-on-FS-proper}. 
  Hence $G \backslash \FS$ is compact and metrizable.
  A formula for a metric is
  \begin{equation*}
  	d_{G \backslash \FS}([c],[c']) = \min \{ d_\FS(gc,c') \mid g \in G \}.
  \end{equation*}
  
  By~\cite[Prop.~2.9]{Bartels-Lueck(2012CAT(0)flow)} the dimension of
  $\FS \setminus \FS^\IR$ is finite.  As $\FS^\IR \cong X$ is also finite dimensional, the
  sum theorem from dimension theory~\cite[Cor.~1.5.5]{Engelking(1978)} now implies that
  $\FS$ is finite dimensional.  Another result from dimension
  theory~\cite[Thm.~1.12.7]{Engelking(1978)} asserts that for open maps $A \to B$ with
  discrete fibers the dimension of $A$ and $B$ agree\footnote{Strictly speaking the two
    results cited from~\cite{Engelking(1978)} are about inductive dimension, not covering
    dimension. However, it is not difficult to check that $\FS$ is separable, so there is
    no difference between covering dimension and inductive
    dimension~\cite[Thm.~1.7.7]{Engelking(1978)}.}.  The quotient map
  $\FS \to G \backslash \FS$ is open, but as the action of $G$ on $\FS$ is not smooth, the
  fibers of the quotient map are not discrete.  For $R > 0$ let $\FS_{R}$ be the subspace
  of $\FS$ consisting of all generalized geodesics $c \colon \IR \to X$ that are locally
  constant on the complement of $[-R,R]$.  As $\FS_{R}$ is a closed subspace of $\FS$ we
  have $\dim \FS_{R} \leq \dim \FS$.  The action of $G$ on $\FS_{R}$ is smooth, so
  $\FS_{R} \to G \backslash \FS_{R}$ has discrete fibers and
  $\dim G \backslash \FS_{R} = \dim \FS_{R} \leq \dim \FS$ is finite.  There is a
  canonical retract $p_R \colon \FS \to \FS_R$ that sends $c$ to the restriction of $c$ to
  $[-R,R]$, more precisely to the generalized geodesic that agrees with $c$ on $[-R,R]$
  and is locally constant on the complement.  It is not difficult to check that the fibers
  of $p_R$ are of uniformly bounded diameter $\epsilon_R$ with $\epsilon_R \to 0$ as
  $R \to \infty$.  The fibers of the induced map
  $\overline{p}_R \colon G \backslash \FS \to G \backslash \FS_R$ have the same property.
  Write $\calw_{R,\delta}$ for the open cover of $G \backslash \FS_R$ by all balls of
  radius $\delta$.  We can refine $\calw_{R,\delta}$ to an open cover $\calw'_{R,\delta}$
  of dimension $\leq \dim \FS$, i.e., every point of $G \backslash \FS_R$ is contained in
  at most $\dim \FS+1$ sets in $\calw'_{R,\delta}$.  Now let $\calu$ an open cover of
  $G \backslash \FS$.  By compactness $\calu$ has a positive Lebesgue number.  We then
  find $\delta > 0$ and $R > 0$ such that the pull-back
  $\overline{p}_R^*(\calw_{R,\delta})$ refines $\calu$.  It follows that
  $\overline{p}_R^*(\calw'_{R,\delta})$ is a refinement of $\calu$ of dimension
  $\leq \dim \FS$.  Thus $\dim G \backslash \FS \leq \dim \FS < \infty$.
\end{proof}

\begin{proposition}\label{prop:long-thin-not-colored}
   There is $N$ such that for any $\beta > 0$ there is $\widehat \alpha > 0$ such that the following is true.
   Let $\calw$ be an $\widehat \alpha$-long cover of $G \backslash \FS$ by open subsets.
   Then there exists an open cover $\calv$ of  $G \backslash \FS$
   such that 	
   \begin{enumerate} 
   \item\label{prop:long-thin-not-colored:long} $\calv$ is an $\beta$-long;
   \item\label{prop:long-thin-not-colored:dim} $\dim \calv \leq N$;
   \item\label{prop:long-thin-not-colored:thin} for each $U \in \calv$ there is
     $W \in \calw$ with $U \subseteq W$.
   \end{enumerate}
\end{proposition}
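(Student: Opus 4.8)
The plan is to deduce the statement from the general existence theory of long thin covers for flow spaces, as developed by Bartels--L\"uck--Reich~\cite{Bartels-Lueck-Reich(2008cover)} and Kasprowski--R\"uping~\cite{Kasprowski-Rueping(2017long-thin)}, applied to the compact metric space $G\backslash\FS$ equipped with the flow $\overline{\Phi}$ induced by $\Phi$. By Lemma~\ref{lem:G-mod-FS-dim-compact} the space $G\backslash\FS$ is compact, metrizable and of finite covering dimension, and $\overline{\Phi}$ is continuous; we take for $N$ the dimension bound furnished by that theory, which depends only on $\dim(G\backslash\FS)$.

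The input specific to our situation --- the one part that is not a formal consequence of the literature --- is the dynamical non-degeneracy of $\overline{\Phi}$ near its fixed point set. Write $(G\backslash\FS)^{\IR} := p(\FS^{\IR})$ for the image of the constant geodesics, a closed set containing the fixed point set of $\overline{\Phi}$. I would show that every point $[c]$ lying on a periodic orbit of $\overline{\Phi}$ that is not a single fixed point satisfies $d_{G\backslash\FS}([c],(G\backslash\FS)^{\IR}) \geq 1$; in particular the $1$-neighbourhood of $(G\backslash\FS)^{\IR}$ is an open neighbourhood of the fixed point set meeting no such orbit, which is exactly the hypothesis the cited constructions require. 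Indeed, such a $[c]$ satisfies $\overline{\Phi}_{\ell}([c]) = [c]$ for some $\ell \neq 0$, i.e.\ $\Phi_{\ell}(c) = gc$ for some $g \in G$, and $c$ is not constant (otherwise the orbit would be a fixed point). If $c$ were not bi-infinite it would be constant on a half-line, so, being non-constant, it has a unique maximal geodesic subinterval $I(c) \neq \IR$; but $I(\Phi_{\ell}c) = I(c) - \ell$ whereas $I(gc) = I(c)$ since $g$ acts isometrically, forcing $\ell = 0$, a contradiction. Hence $c \in \FS_\infty$, and for every $p \in X$ the inequality $d_X(c(t),p) + d_X(c(-t),p) \geq d_X(c(t),c(-t)) = 2|t|$ yields
\[
  d_{\FS}(c, c_{p,p}) \;=\; \int_{\IR} \frac{d_X(c(s),p)}{2e^{|s|}}\,ds \;\geq\; \int_0^{\infty} \frac{2t}{2e^{t}}\,dt \;=\; \int_0^{\infty} t e^{-t}\,dt \;=\; 1,
\]
where $c_{p,p} \in \FS^{\IR}$ is the constant geodesic at $p$; since every $gc$ is again bi-infinite, this gives the asserted bound. (Should the cited construction prefer the condition phrased for almost periodic orbits, it is upgraded to that form by a routine compactness argument using continuity of $\overline{\Phi}$ and compactness of $G\backslash\FS$.)

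With the hypotheses in place I would run the construction of~\cite{Bartels-Lueck-Reich(2008cover),Kasprowski-Rueping(2017long-thin)}. Given $\beta$, put $\widehat{\alpha} := 2\beta$ (enlarged to a larger multiple of $\beta$ if the treatment of the periodic region calls for it). For the given $\widehat{\alpha}$-long cover $\calw$ choose, for each $c_0$, an element $W_{c_0} \in \calw$ with $\overline{\Phi}_{[-\widehat{\alpha},\widehat{\alpha}]}(c_0) \subseteq W_{c_0}$ and, by the tube lemma, an open neighbourhood $V_{c_0} \subseteq W_{c_0}$ of the compact segment $\overline{\Phi}_{[-\beta,\beta]}(c_0)$; uniform continuity of $\overline{\Phi}$ and compactness let one take the $V_{c_0}$ to contain flow boxes around $c_0$ of a transversal scale independent of $\calw$ --- this flow-direction room is precisely what the $\widehat{\alpha}$-longness of $\calw$ provides. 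One then splits $G\backslash\FS$ into the compact region swept out by periodic orbits of period $\leq \gamma$ (for a fixed multiple $\gamma$ of $\beta$; by the previous paragraph it is bounded away from $(G\backslash\FS)^{\IR}$) and its complement, where the flow is non-recurrent at scale $\gamma$. Over the complement one uses ordinary $\beta$-long flow boxes; over the periodic region one uses finitely many flow boxes wrapping each closed orbit, together with solid-torus neighbourhoods of the orbits of circumference $\leq 2\widehat{\alpha}$, for which $\widehat{\alpha}$-longness already forces $W_{c_0}$ to be such a neighbourhood. Throughout, the $\beta$-long pieces are taken of the form $\overline{\Phi}_{(-\beta',\beta')}(D)$ with $D$ a small transversal disc, shrunk so as to lie inside some $V_{c_0}$; a finite subcover gives $\calv$, with~\ref{prop:long-thin-not-colored:thin} immediate and $\dim \calv \leq N$. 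I expect the genuine difficulty to be this last dimension estimate: arranging the transversal discs so that the flowed-out pieces have covering multiplicity at most $N+1$ independently of $\calw$, simultaneously in the periodic and the non-recurrent regime. This is the technical core of the long thin cover machinery, and the step I would take essentially verbatim from the cited references; verifying that $(G\backslash\FS,\overline{\Phi})$ meets their hypotheses is taken care of by Lemma~\ref{lem:G-mod-FS-dim-compact}, the basic facts of Section~\ref{sec:The_flow_space_FS}, and the non-degeneracy statement proved above.
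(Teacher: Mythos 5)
Your overall plan --- quotient out the $G$-action and invoke the long-thin-cover machinery for the flow on the compact finite-dimensional space $G\backslash\FS$ --- is exactly the route the paper takes, and the input from Lemma~\ref{lem:G-mod-FS-dim-compact} is the right one. However, there are two places where the proposal diverges from what is actually needed, one harmless and one a genuine gap.

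The non-degeneracy digression is a red herring. The paper applies~\cite[Thm.~1.1]{Kasprowski-Rueping(2017cov)} after checking only that $G\backslash\FS$ is compact, metrizable and of finite covering dimension; no lower bound on the distance from periodic orbits to the stationary set is verified or needed. Your integral estimate $d_\FS(c,c_{p,p}) \ge \int_0^\infty te^{-t}\,dt = 1$ for $c \in \FS_\infty$ is a correct and pleasant computation (and the step reducing to $c \in \FS_\infty$ is essentially the first half of Lemma~\ref{lem:about-tau_c}), but it answers a question the cited theorem doesn't ask. Also note that identifying $p(\FS^{\IR})$ with the fixed point set of $\overline\Phi$ is a claim, not an observation: a priori $[c]$ could be a fixed point of $\overline\Phi$ with $c$ a non-constant geodesic satisfying $\Phi_t(c)\in Gc$ for all $t$; ruling this out requires the discreteness argument from Lemma~\ref{lem:about-tau_c}.

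The actual gap is in property~\ref{prop:long-thin-not-colored:thin}, which you dispose of too quickly. You propose to modify the Kasprowski--R\"uping construction, shrinking each flow box to lie in a prescribed $V_{c_0}\subseteq W_{c_0}$, and then to import their multiplicity bound ``essentially verbatim.'' But their dimension estimate is proved for \emph{their} construction, not an arbitrary shrinking of it, and nothing in your write-up explains why the modification preserves the bound. You have in fact inverted where the work lies: the dimension bound comes for free from~\cite[Thm.~1.1]{Kasprowski-Rueping(2017cov)} once Lemma~\ref{lem:G-mod-FS-dim-compact} is in place, and the point that needs an argument is the refinement property~\ref{prop:long-thin-not-colored:thin}. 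The paper handles this without modifying the construction at all: with $\widehat\alpha := 20\beta$ it uses compactness to extract a uniform $\delta>0$ such that every $\delta$-neighbourhood of a segment $\Phi_{[-\widehat\alpha,\widehat\alpha]}(c)$ lies in some $W\in\calw$, then splits $G\backslash\FS$ by period into the pieces $\tau(c)>\widehat\alpha$ and $\tau(c)\le\widehat\alpha$, and checks --- by inspecting the last lines of~\cite[Thm.~5.3]{Kasprowski-Rueping(2017cov)} and~\cite[Lem.~7.6]{Kasprowski-Rueping(2017cov)}, together with~\cite[Lem.~7.1 and Lem.~2.7]{Kasprowski-Rueping(2017cov)} --- that the sets they produce are already $\delta$-neighbourhoods of flow segments and hence subsets of elements of $\calw$. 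To make your argument complete you would need either this kind of inspection of the cited proofs, or a self-contained proof that your shrunk flow boxes can be chosen with multiplicity $\le N+1$ --- neither of which is supplied.
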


\begin{proof}
  The main result from~\cite{Kasprowski-Rueping(2017cov)} almost gives this.  More
  precisely Lemma~\ref{lem:G-mod-FS-dim-compact} allows us to
  apply~\cite[Thm.~1.1]{Kasprowski-Rueping(2017cov)} to $G \backslash \FS$.  Thus there
  exists $N$ only depending on the dimension of $G \backslash \FS$ such that for given
  $\beta > 0$ there exists a cover $\calv$ of $G \backslash \FS$
  satisfying~\ref{prop:long-thin-not-colored:long}
  and~\ref{prop:long-thin-not-colored:dim}.
   
  We will argue below that the construction from~\cite{Kasprowski-Rueping(2017cov)} in
  fact also gives~\ref{prop:long-thin-not-colored:thin}.
  
  We point out that we apply~\cite{Kasprowski-Rueping(2017cov)} to the quotient
  $G \backslash \FS$ which no longer carries a group action.
  In~\cite{Kasprowski-Rueping(2017cov)} an equivariant situation is considered, but we use
  the special case of~\cite{Kasprowski-Rueping(2017cov)} where the group acting on the
  flow space is trivial.
  
  Given $\beta > 0$, let $\hat \alpha := 20 \beta$.  Suppose that $\calw$ is an
  $\hat\alpha$-long cover.  Then for any $c \in G \backslash \FS$ there is $W \in \calw$
  with $\Phi_{[-\hat \alpha,\hat \alpha]}(c) \subseteq W$.  As $W$ is open there is
  $\delta > 0$ such that the $\delta$-neighborhood of
  $\Phi_{[-\hat \alpha,\hat \alpha]}(c)$ is still contained in $W$.  As $G \backslash \FS$
  is compact we can choose $\delta > 0$ uniformly, that is for any
  $c \in G \backslash \FS$ there is $W \in \calw$ containing the $\delta$-neighborhood of
  $\Phi_{[-\hat \alpha,\hat \alpha]}(c)$.
  
  The period of $c \in G \backslash \FS$ is
  $\tau(c) := \inf \{ t > 0 \mid \Phi_{t}(c) = c \}$.  If $\Phi_{t}(c) \neq c$ for all
  $t>0$, then $\tau(c) = \infty$.  Let
  \begin{equation*}
    \big( G \backslash \FS \big)_{> \hat \alpha}
    := \{ c \in G \backslash \FS \mid \tau(c) \in (\hat \alpha,\infty) \}.
  \end{equation*}
  Let $\delta > 0$ be given.  By~\cite[Thm.~5.3]{Kasprowski-Rueping(2017cov)} there exists
  an $\beta$-long cover $\calv_1$ of $\big( G \backslash \FS \big)_{> \hat \alpha}$ of
  dimension $\leq N_1$, where $N_1$ depends only on the dimension of $G \backslash \FS$.
  Moreover, as explained in the last line of the proof
  of~\cite[Thm.~5.3]{Kasprowski-Rueping(2017cov)}, every $V \in \calv_1$ is contained in
  the $\delta$-neighborhood of $\Phi_{[\hat \alpha,\hat \alpha]}(c)$ for some $c$, and
  therefore in some $W \in \calw$.

  Next consider
  \begin{equation*}
    \big( G \backslash \FS \big)_{\leq \hat \alpha}
    := \{ c \in G \backslash \FS \mid \tau(c) \in [0,\hat \alpha] \}.
  \end{equation*}
  By~\cite[Lem.~7.6]{Kasprowski-Rueping(2017cov)} there exists an open cover $\calv_2$ of
  $\big( G \backslash \FS \big)_{\leq \hat \alpha}$ of dimension $\leq N_2$, where $N_2$
  again depends only on the dimension of of $G \backslash \FS$.  Moreover, for each $c$ of
  period $\leq \hat \alpha$ there is $V \in \calu_2$ with $\Phi_\IR(c) \subseteq V$, and
  each $V \in \calv_2$ is contained in the $\delta$-neighborhood of $\Phi_\IR(c)$.  In
  fact, by construction, see the last line of the proof
  of~\cite[Lem.~7.6]{Kasprowski-Rueping(2017cov)}, $c$ can here be chosen to be of period
  $\leq \hat \alpha$.  In particular, $V$ is contained in the $\delta$-neighborhood of
  $\Phi_{[0,\hat \alpha]}(c) = \Phi_\IR(c)$, and therefore in some $W \in \calw$.
    
  We have
  $G \backslash \FS = \big( G \backslash \FS \big)_{\leq \hat \alpha} \sqcup \big( G
  \backslash \FS \big)_{> \hat \alpha}$, where the first set if closed
  by~\cite[Lem.~7.1]{Kasprowski-Rueping(2017cov)} and the second consequently open.  In
  particular, the $V \in \calv_1$ are open in $G \backslash \FS$.  We can
  use~\cite[Lem.~2.7]{Kasprowski-Rueping(2017cov)} to extend the $V \in \calv_2$ to open
  subsets of $G \backslash \FS$ while preserving the properties of $\calv_2$.  The union
  of the two covers is now the needed cover $\calv$.
\end{proof}

\begin{lemma}\label{lem:color}
   Fix a number $N$.
   For any $\alpha > 0$ there is $\beta > 0$ with following property.
   Let $\calv$ be an $\beta$-long cover of $G \backslash \FS$.
   Assume that $\dim \calv \leq N$.
   Then there exists collections $\calu_0,\ldots,\calu_N$ of open subsets of $G \backslash \FS$
   such that 	
   \begin{enumerate}
   \item $\calu := \calu_0 \sqcup \ldots \sqcup \calu_N$ is an $\alpha$-long cover of
     $G \backslash \FS$;
   \item for each $i$ the open sets in $\calu_i$ are pairwise disjoint;
   \item for each $U \in \calu = \calu_0 \sqcup \ldots \sqcup \calu_N$ there is
     $V \in \calv$ with $U \subseteq V$.
   \end{enumerate}
\end{lemma}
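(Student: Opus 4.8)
The plan is to obtain the desired coloured cover by pulling back, along a canonical map into the nerve, the open stars of the vertices of its barycentric subdivision. Write $Y := G\backslash\FS$, which by Lemma~\ref{lem:G-mod-FS-dim-compact} is compact, metrizable and finite-dimensional and carries the induced flow $\Phi$. Put $\gamma_N := \tfrac1{(N+1)^2}$ and apply Proposition~\ref{prop:partition-no-G} to $Y$ with the given $\alpha$, with $\e := \gamma_N/2$, and with $N$; it returns a number $\widehat\alpha > 0$, and we set $\beta := \widehat\alpha$. Given a $\beta$-long cover $\calv$ of $Y$ with $\dim\calv\le N$, Proposition~\ref{prop:partition-no-G} supplies a partition of unity $\{t_V\colon Y\to[0,1]\mid V\in\calv\}$ subordinate to $\calv$, together with $\delta > 0$, such that $\fold{\FS}(z,z') < (\alpha,\delta)$ implies $|t_V(z)-t_V(z')| < \gamma_N/2$ for all $V$. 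As $Y$ is compact only finitely many $t_V$ are not identically zero, so after discarding the rest we may assume $\calv$ is finite (the remaining sets still cover since $\sum_V t_V\equiv 1$, and the dimension bound and subordination persist). Note that for $|s|\le\alpha$ one has $\fold{\FS}(z,\Phi_s z) < (\alpha,\delta)$ (take $t=s$ in the definition of $\fold{\FS}$), hence
\[
  |t_V(\Phi_s z) - t_V(z)| < \gamma_N/2 \qquad (V\in\calv,\ z\in Y,\ |s|\le\alpha).
\]
One could also avoid Proposition~\ref{prop:partition-no-G} and build such a slowly flowing partition of unity directly from Lemma~\ref{lem:slow-bump-fct}, exactly as in the proof of Proposition~\ref{prop:partition-no-G}.

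Next form $K := \operatorname{Nerve}(\calv)$, a finite simplicial complex with $\dim K\le N$ since $\dim\calv\le N$, and the continuous canonical map $\psi\colon Y\to|K|$, $\psi(z) := \sum_V t_V(z)\cdot V$; the support $\{V\mid t_V(z) > 0\}$ is a simplex of $K$ because $z$ lies in the corresponding intersection. Let $K'$ be the barycentric subdivision, with vertices $\hat\sigma$ ($\sigma\in K$) and $\dim K'\le N$, and colour $\hat\sigma$ by $\dim\sigma\in\{0,\dots,N\}$; this is a proper colouring, since an edge of $K'$ joins $\hat\sigma$ and $\hat\tau$ only when $\sigma\subsetneq\tau$ or $\tau\subsetneq\sigma$, which forces $\dim\sigma\ne\dim\tau$. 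For a vertex $w$ of $K'$ write $\operatorname{St}_{K'}(w)$ for its open star in $K'$, and set
\[
  \calu_i := \{\,\psi^{-1}(\operatorname{St}_{K'}(\hat\sigma)) \mid \sigma\in K,\ \dim\sigma = i\,\}\quad (i=0,\dots,N),\qquad \calu := \calu_0\sqcup\dots\sqcup\calu_N.
\]
These are open subsets of $Y$, and $\calu$ covers $Y$ because the open stars of the vertices cover $|K|$. If $\hat\sigma\ne\hat\tau$ have the same colour then neither of $\sigma,\tau$ is a face of the other, so $\{\hat\sigma,\hat\tau\}$ is not an edge of $K'$ and hence $\operatorname{St}_{K'}(\hat\sigma)\cap\operatorname{St}_{K'}(\hat\tau)=\emptyset$; thus the sets in each $\calu_i$ are pairwise disjoint. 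Finally, I record the elementary fact that $y\in\operatorname{St}_{K'}(\hat\sigma)$ iff $\sigma\in\{\rho_1,\dots,\rho_r\}$, where $\rho_j := \{V\mid t_V(y)\ge b_j\}$ and $b_1 > \dots > b_r > 0$ are the distinct positive values of the coordinates $t_V(y)$; in particular $y\in\operatorname{St}_{K'}(\hat\sigma)$ implies $t_V(y) > 0$ for all $V\in\sigma$, and conversely $y\in\operatorname{St}_{K'}(\hat\sigma)$ whenever $\min_{V\in\sigma}t_V(y) > \max_{V\notin\sigma}t_V(y)$ (convention $\max\emptyset = 0$). The first consequence shows, for any vertex $V_0$ of $\sigma$, that $\psi^{-1}(\operatorname{St}_{K'}(\hat\sigma))\subseteq\{z\mid t_{V_0}(z) > 0\}\subseteq V_0\in\calv$; hence every member of $\calu$ lies in some member of $\calv$.

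It remains to show $\calu$ is $\alpha$-long, and here the slow-variation estimate enters. Fix $c_0\in Y$ and list the positive values of $t_V(c_0)$ with multiplicity as $v_0\ge v_1\ge\dots\ge v_d$; since the support of $\psi(c_0)$ is a simplex of $K$ we have $d\le N$, and since $\sum_{i=0}^d v_i = 1$ we get $v_0\ge\tfrac1{d+1}$. Put $v_{d+1} := 0$; the $d+1$ gaps $v_i - v_{i+1}$ sum to $v_0$, so some gap satisfies $v_k-v_{k+1}\ge\tfrac{v_0}{d+1}\ge\tfrac1{(d+1)^2}\ge\gamma_N$. Let $\sigma := \{V\mid t_V(c_0)\ge v_k\}$, a face of the support simplex of $\psi(c_0)$ and hence a simplex of $K$; then $\min_{V\in\sigma}t_V(c_0) = v_k$ and $\max_{V\notin\sigma}t_V(c_0) = v_{k+1}$ (or $0$), so this margin is $\ge\gamma_N$. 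For $|s|\le\alpha$ the displayed slow-variation estimate gives $\min_{V\in\sigma}t_V(\Phi_s c_0) > v_k-\gamma_N/2$ and $\max_{V\notin\sigma}t_V(\Phi_s c_0) < v_{k+1}+\gamma_N/2$, whence $\min_{V\in\sigma}t_V(\Phi_s c_0) > \max_{V\notin\sigma}t_V(\Phi_s c_0)$ and therefore $\psi(\Phi_s c_0)\in\operatorname{St}_{K'}(\hat\sigma)$. Thus $\Phi_{[-\alpha,\alpha]}(c_0)\subseteq\psi^{-1}(\operatorname{St}_{K'}(\hat\sigma))\in\calu$, so $\calu$ is $\alpha$-long.

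The crux — and the place to be careful — is the order of quantifiers: $\beta$ (equivalently $\widehat\alpha$) must be chosen before the cover $\calv$, whereas a naive bound on how far a short flow segment may drift and still remain inside one open star of $K'$ would depend on the combinatorics of $K$. The pigeonhole/gap argument is precisely what makes the admissible drift $\gamma_N = (N+1)^{-2}$ depend only on $N$; since the slow-variation bound produced by Proposition~\ref{prop:partition-no-G} also depends only on $N$ and $\alpha$, the argument closes.
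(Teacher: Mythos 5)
Your proof is correct and reaches the conclusion by a genuinely different route than the paper's.  Both arguments pass through the nerve of $\calv$, barycentrically subdivide it, colour the subdivision vertices $\hat\sigma$ by $\dim\sigma$, and pull back the resulting open stars; so the output cover is structurally the same.  The difference is in how the $\alpha$-longness of the pulled-back cover is obtained.  The paper constructs an auxiliary metric $d_\lambda$ on $G\backslash\FS$ that mixes flow time with a scaled copy of the original metric, invokes~\cite[Prop.~5.3]{Bartels-Lueck-Reich(2008hyper)} to get a map to the nerve that is almost Lipschitz for $d_\lambda$, and then compares Lebesgue numbers of the cover and of the subdivided star cover.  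You instead feed in Proposition~\ref{prop:partition-no-G} (proved independently in Section~\ref{sec:partition_of_unity}): its partition of unity moves by less than your $\gamma_N/2 = \tfrac{1}{2(N+1)^2}$ under flowing for time $\le\alpha$, and a pigeonhole on the at most $N{+}1$ positive barycentric coordinates of $\psi(c_0)$ produces a simplex $\sigma$ with defining margin $\ge\gamma_N$, so the whole segment $\Phi_{[-\alpha,\alpha]}(c_0)$ stays inside the star of $\hat\sigma$.  This avoids the metric $d_\lambda$ and the external citation entirely, and the explicit bound $\gamma_N$ is what makes the admissible drift depend only on $N$, which is exactly where the quantifier order ($\beta$ chosen before $\calv$) has to be respected; you flagged this correctly.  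One small point to tighten: ``as $Y$ is compact only finitely many $t_V$ are not identically zero'' is true here because a partition of unity is by definition locally finite, and local finiteness plus compactness gives finiteness; alternatively, pass to a finite subcover of $\calv$ before applying Proposition~\ref{prop:partition-no-G}.  As stated, the sentence elides the local finiteness step.
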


The proof is not difficult.  We translate between open covers and maps to simplicial
complexes and for the latter we use barycentric subdivision.

\begin{proof}[Proof of Lemma~\ref{lem:color}]
  The metric on $\FS$ has the property that $d_\FS(\Phi_t(c),c) \leq |t|$ for all
  $c \in \FS$ and $t \in \IR$.  It is not difficult to check that there is metric
  $d_{G \backslash \FS}$ with the same property.  For $\lambda > 0$ we define a metric
  $d_\lambda$ on $G \backslash \FS$ as follows.  For $c,c'\in G \backslash \FS$ set
  \begin{equation*}
    d_{\lambda}(c,c') := \inf \sum_{i=0}^{n} |t_i| + \lambda d_{G \backslash \FS}(\Phi_{t_i} (c_i),c_{i+1})
  \end{equation*}
  where the infimum is taken over all finite sequences $c=c_0,\dots,c_{n+1}=c'$,
  $t_0,\dots,t_n \in \IR$.  Compactness of $G \backslash \FS$ can be used to check that
  for an $\beta$-long cover $\calv$ there is $\lambda > 0$ such that the Lebesgue number
  of $\calv$ with respect to $d_\lambda$ is $\geq \beta$.  Let now $\Lambda$ be the nerve
  of $\calv$, i.e., the simplicial complex that has a vertex $v_V$ for each $V \in \calv$
  and where $v_{V_0},\dots,v_{V_n}$ span a simplex iff
  $V_0 \cap \dots \cap {V_n} \neq \emptyset$.  The dimension of $\calv$ is exactly the
  dimension of $\Lambda$.  We equip $|\Lambda|$ with the $l^1$-metric $d^1$.  There is now
  a map $f \colon G \backslash \FS \to |\Lambda|$ satisfying
  \begin{equation}\label{eq:f-contracts} d_\lambda(c,c') \leq \frac{\beta}{4N} \implies
    d^1(f(c),f(c')) \leq \frac{16N^2}{\beta}d_\lambda(c,c'),
  \end{equation}     
  see~\cite[Prop.~5.3]{Bartels-Lueck-Reich(2008hyper)}.  By its construction the map $f$ has
  the following property: the preimage of the open star of $v_U$ is exactly $U$.  Let now
  $\Lambda'$ be the barycentric subdivision of $\Lambda$.  The vertices of $\Lambda'$
  correspond to the simplices of $\Lambda$.  For $j=0,\dots,N$ let $I_j$ be the set of
  simplices of $\Lambda'$ to $j$-simplices of $\Lambda$ and $\tilde\calu_j$ be the
  collection of open stars around simplices in $I_j$.  Then
  $\tilde \calu := \tilde \calu_0 \sqcup \dots \sqcup \tilde \calu_N$ is an open cover
  $|\Lambda| = |\Lambda'|$ of positive Lebesgue number $L$, where $L$ depends only on the
  dimension of $\Lambda$.  Moreover, for each $j$ the open sets in $\tilde \calu_j$ are
  pairwise disjoint.  We now set $\calu_j := f^*\tilde \calu_j$ and $\calu := f^* \calu$.
  Using estimate~\ref{eq:f-contracts} we see that the Lebesgue number of $\calu$ with
  respect to $d_\Lambda$ is at least $\min \{ \frac{\beta}{4N}, \frac{L\beta}{16N^2} \}$.
  In particular, if we choose $\beta \geq \max \{ 4N\alpha, \frac{16N^2\alpha}{L}\}$, then
  the Lebesgue number of $\calu$ with respect to $d_\lambda$ is at least $\alpha$.  Thus
  $\calu$ is $\alpha$-long.  Finally, each open star for $\Lambda'$ is contained in an
  open star for $\Lambda$.  Thus each $U$ from $\calu$ is contained in some $V$ from
  $\calv$.
\end{proof}

\begin{remark}
  A more careful analysis of the arguments from~\cite{Kasprowski-Rueping(2017cov)} reveals
  that the constructions there also lead to coloured covers.  This leads to a more direct
  proof of Proposition~\ref{prop:long-thin-cover-non-equivariant} and renders
  Lemma~\ref{lem:color} superfluous.
\end{remark}


\typeout{------------------------- Section 9:  Local structure  ----------------------}

\section{Local structure}\label{sec:local}

In section we will prove Proposition~\ref{prop:local}. 


\subsection{Neighborhoods in $\FS$ mapping to $G$}

\begin{lemma}\label{lem:d_FS-vs-d_G-on-compact}
   Let $\FS_0 \subseteq \FS$ be compact.
   For all $\alpha > 0$ there is $\beta > 0$ such that for $g \in G$, $c \in \FS_0$ we have
   \begin{equation*}
   	  d_\FS(gc,c) < \alpha \quad \implies \quad d_G(g,e) < \beta.
   \end{equation*}	
\end{lemma}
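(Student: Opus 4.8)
The plan is to derive the statement from properness of the $G$-action on $\FS$ together with the fact that $\FS$ is a proper metric space (Lemmas~\ref{lem:FS-is-proper} and~\ref{lem:G-on-FS-proper}) and that $d_G$ is a proper metric, by a direct contradiction argument.

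First I would suppose the claim fails for some $\alpha > 0$. Then there are sequences $(g_n)_{n \in \IN}$ in $G$ and $(c_n)_{n \in \IN}$ in $\FS_0$ with $d_\FS(g_n c_n, c_n) < \alpha$ for all $n$, but $d_G(g_n, e) \to \infty$. The key observation is that all relevant points lie in one compact set: put
\[
  K := \{ c \in \FS \mid d_\FS(c,\FS_0) \leq \alpha \}.
\]
Since $\FS_0$ is compact it is bounded, so $K$ is closed and bounded, hence compact because $\FS$ is proper. Now $c_n \in \FS_0 \subseteq K$, and $g_n c_n \in K$ since $d_\FS(g_n c_n, c_n) < \alpha$ and $c_n \in \FS_0$; therefore $g_n c_n \in g_n K \cap K$, i.e.\ $g_n K \cap K \neq \emptyset$ for every $n$.

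Then I would invoke that the $G$-action on $\FS$ is proper (Lemma~\ref{lem:G-on-FS-proper}), which gives that $S := \{ g \in G \mid gK \cap K \neq \emptyset \}$ is relatively compact in $G$ for the compact set $K$. Since $d_G$ is a proper left-invariant metric generating the topology of $G$, the relatively compact set $S$ is $d_G$-bounded, say $S \subseteq \{ g \in G \mid d_G(g,e) \leq \beta_0 \}$; as $g_n \in S$ this contradicts $d_G(g_n,e) \to \infty$. Taking any $\beta > \beta_0$ then yields the desired implication. The only point requiring care is matching up the precise formulation of "proper action" being used; in the standard formulation that the map $G \times \FS \to \FS \times \FS$, $(g,c) \mapsto (gc,c)$, is proper, the set $S$ above is the image under the first projection of the preimage of the compact set $K \times K$, hence compact, so the argument goes through verbatim. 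I expect no real obstacle here beyond this bookkeeping.
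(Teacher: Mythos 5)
Your argument is correct and follows essentially the same route as the paper: both proofs run a contradiction via a sequence $(g_n,c_n)$, use properness of the metric on $\FS$ to produce a compact set $K$ containing the relevant orbit points, invoke properness of the $G$-action (Lemma~\ref{lem:properties_of_groups_actions}~\ref{lem:properties_of_groups_actions:translating_compact_subgroups}) to confine the $g_n$ to a compact subset of $G$, and conclude using that $d_G$ is a proper metric. Your version is marginally cleaner in that you take $K$ to be the closed $\alpha$-neighborhood of $\FS_0$ directly, whereas the paper first passes to a convergent subsequence $c_n \to c_0$ and works with a ball around $c_0$; this is a minor bookkeeping difference, not a different idea.
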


\begin{proof}
  Assume this fails for a given $\alpha > 0$.  Then there are sequences $(c_n)_{n \ge 0}$
  in $\FS_0$, and $(g_n)_{n \ge 0}$ in $G$ with $d_{\FS}(g_nc_n,c_n) < \alpha$ but
  \begin{equation*}
    \lim_{n \to \infty} d_G(g_n,e) = \infty.
  \end{equation*}  
  After passing to a subsequence, we can assume $c_0 = \lim_{n \to \infty} c_n$ for some
  $c_0 \in \FS_0$.  We can choose a constant $C > 0$ such that
  $d_{\FS}(c_n,c_0) = d_{\FS}(g_nc_n,g_nc_0) \le C$ for $n \ge 0$. 
  Then the $g_n c_0$ are elements of the closed ball $K$ of radius $\alpha + 2C$ around $c_0$. 
  Since $\FS$ is a proper metric space by Lemma~\ref{lem:FS-is-proper},
  $K$ is compact. The set $\{g \in G \mid g \cdot K \cap K \not= \emptyset\}$ is a compact
  subset of $G$ by
  Lemma~\ref{lem:properties_of_groups_actions}~%
\ref{lem:properties_of_groups_actions:translating_compact_subgroups}
  and contains the sequence $(g_n)_{n \ge 0}$.
  After passing to subsequences again, we can assume $\lim_{n \to \infty} g_n =g$
  for some $g \in G$.   Hence $\lim_{n \to \infty} d_G(g_n,e) = d_G(g,e)$. This contradicts
  $\lim_{n \to \infty} d_G(g_n,e) = \infty$.  
\end{proof}

We have defined $U^{\fol}_{\alpha,\delta}(c_0)$ in
Subsection~\ref{subsec:foliated-distance-FS} and $V_{c_0}$ in~\ref{V_c}.

\begin{proposition}\label{prop:constr-h} Let $\FS_0 \subseteq \FS$ be compact.  For all
  $\alpha > 0$ there is $\beta > 0$ such that the following is true: For all $\eta > 0$,
  $c_0 \in \FS_0$, there are $\delta > 0$ and 
  a (not necessarily continuous) map $h \colon G \cdot U^{\fol}_{\alpha,\delta}(c_0) \to G$ satisfying
   \begin{enumerate}
   	\item\label{prop:constr-h:fol-to-fol} for $c,c' \in G \cdot U^{\fol}_{\alpha,\delta}(c_0)$ we have 
       \begin{equation*}
   	      \fold{\FS}(c,c') < (\alpha,\delta) \quad \implies \quad d_{V_{c_0}-\fol}(h(c),h(c')) < (\beta,\eta);
       \end{equation*}	
    \item\label{prop:constr-h:G} for $g \in G$, $c \in G \cdot U^{\fol}_{\alpha,\delta}(c_0)$ we have
      \begin{equation*}
      	d_{V_{c_0}-\fol}(h(gc),gh(c))< (\beta,\eta).
      \end{equation*}
   \end{enumerate}   
\end{proposition}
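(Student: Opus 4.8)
The plan is to construct $h$ by an uncanonical selection and to reduce both claims to a single compactness statement about group elements that move points of the flow-neighborhood $U^\fol_{\alpha,\delta}(c_0)$ of $c_0$ back into it. \emph{Key claim:} for every $\alpha > 0$ there is $\beta > 0$, depending only on $\alpha$ and $\FS_0$, such that for every $\eta > 0$ and $c_0 \in \FS_0$ there is $\delta > 0$ so that whenever $k \in G$ and $c_1, c_2 \in U^\fol_{\alpha,\delta}(c_0)$ satisfy $k c_1 = c_2$, then $\fold{V_{c_0}}(e,k) < (\beta,\eta)$.

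To prove the key claim, first record an a priori bound: if $k c_1 = c_2$ with $c_i \in U^\fol_{\alpha,\delta}(c_0)$ and $\delta \le 1$, then Lemma~\ref{lem:Basics_about_foliated_distance}(i) and isometry of the action give $d_\FS(c_0, k c_0) \le d_\FS(c_0, k c_1) + d_\FS(k c_1, k c_0) = d_\FS(c_0, c_2) + d_\FS(c_1, c_0) < 2(\alpha+1)$, so Lemma~\ref{lem:d_FS-vs-d_G-on-compact} yields $d_G(e,k) < \beta_0$ with $\beta_0 = \beta_0(\alpha,\FS_0)$ independent of $c_0$ and $\eta$; put $\beta := \beta_0 + 1$. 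Next a compactness step: since the conclusion only weakens as $\eta$ grows, assume $\eta \le 1$; I claim some $\delta > 0$ forces the existence of $v \in V_{c_0}$ with $d_G(v,k) < \eta$. Were there no such $\delta$ (for some fixed $\eta_0 \in (0,1]$ and $c_0 \in \FS_0$), we would get $\delta_n \to 0$, points $c_i^n \in U^\fol_{\alpha,\delta_n}(c_0)$ and $k_n \in G$ with $k_n c_1^n = c_2^n$ but $d_G(v,k_n) \ge \eta_0$ for all $v \in V_{c_0}$; by the a priori bound and properness of $d_G$ the $k_n$ lie in a compact subset of $G$, so picking $s_i^n \in [-\alpha,\alpha]$ with $d_\FS(\Phi_{s_i^n}(c_0), c_i^n) < \delta_n$ and passing to subsequences, $k_n \to k_\infty$, $s_i^n \to s_i \in [-\alpha,\alpha]$, whence $c_i^n \to \Phi_{s_i}(c_0)$ and, by continuity of the $G$-action on $\FS$, $k_\infty \Phi_{s_1}(c_0) = \lim k_n c_1^n = \lim c_2^n = \Phi_{s_2}(c_0)$; applying $\Phi_{-s_1}$ gives $k_\infty c_0 = \Phi_{s_2-s_1}(c_0)$, that is $k_\infty \in V_{c_0}$, contradicting $d_G(k_\infty, k_n) \ge \eta_0$ for all $n$. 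Finally, given such $v$ we have $d_G(e,v) \le d_G(e,k) + d_G(k,v) < \beta_0 + 1 = \beta$, and together with $d_G(e,v) = d_G(e,ev)$ and $d_G(ev,k) = d_G(v,k) < \eta$ this is exactly $\fold{V_{c_0}}(e,k) < (\beta,\eta)$.

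With the key claim available, given $\alpha$ let $\beta$ be the larger of the constants it produces for $\alpha$ and for $2\alpha$. Given $\eta$ and $c_0$, take $\delta'$ from the key claim for $(2\alpha,\eta,c_0)$, then $\epsilon$ from Lemma~\ref{lem:sym_plus_triangle-fol}(ii) applied to $(\alpha,\delta')$, then $\delta''$ from the key claim for $(\alpha,\eta,c_0)$, and set $\delta := \min\{\delta',\epsilon,\delta'',1\}$. Define $h$ on $G \cdot U^\fol_{\alpha,\delta}(c_0)$ by choosing, for each $c$, some $h(c) \in G$ with $h(c)^{-1}c \in U^\fol_{\alpha,\delta}(c_0)$ (possible by definition of the $G$-saturation; this selection is exactly where continuity is lost). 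For~\ref{prop:constr-h:G}: with $a := h(gc)$, $b := g\,h(c)$ we have $a^{-1}(gc) \in U^\fol_{\alpha,\delta}(c_0)$ and $b^{-1}(gc) = h(c)^{-1}c \in U^\fol_{\alpha,\delta}(c_0)$, and $a^{-1}b$ sends the latter to the former, so the key claim for $\alpha$ gives $\fold{V_{c_0}}(e,a^{-1}b) < (\beta,\eta)$; left $G$-invariance of $\fold{V_{c_0}}$ turns this into $\fold{V_{c_0}}(h(gc), g\,h(c)) = \fold{V_{c_0}}(a,b) < (\beta,\eta)$. For~\ref{prop:constr-h:fol-to-fol}: with $a := h(c)$, $b := h(c')$ and $\fold{\FS}(c,c') < (\alpha,\delta)$, left $G$-invariance of $\fold{\FS}$ (Lemma~\ref{lem:Basics_about_foliated_distance}(ii)) gives $\fold{\FS}(a^{-1}c, a^{-1}c') < (\alpha,\delta)$; combined with $\fold{\FS}(c_0,a^{-1}c) < (\alpha,\delta)$ and $\delta \le \epsilon$, Lemma~\ref{lem:sym_plus_triangle-fol}(ii) yields $a^{-1}c' \in U^\fol_{2\alpha,\delta'}(c_0)$, while $b^{-1}c' \in U^\fol_{\alpha,\delta}(c_0) \subseteq U^\fol_{2\alpha,\delta'}(c_0)$ since $\delta \le \delta'$; as $a^{-1}b$ carries $b^{-1}c'$ to $a^{-1}c'$, the key claim for $2\alpha$ gives $\fold{V_{c_0}}(e,a^{-1}b) < (\beta,\eta)$, hence $\fold{V_{c_0}}(h(c),h(c')) < (\beta,\eta)$.

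The main obstacle is the key claim, and inside it the decoupling of the single bound $\beta$ — the a priori radius coming from properness, uniform over $\FS_0$ and over $\eta$ — from the $\eta$-dependent radius $\delta$ supplied by the limiting argument; once this is set up, the two assertions reduce to routine manipulation of the left-invariance and triangle inequalities already established for $\fold{\FS}$ and $\fold{V}$.
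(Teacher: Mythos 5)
Your proposal is correct and rests on the same core idea as the paper's proof: define $h$ by an uncanonical selection $h(c)^{-1}c \in U^\fol_{\alpha,\delta}(c_0)$, get a uniform $\beta$ from Lemma~\ref{lem:d_FS-vs-d_G-on-compact} combined with properness (Lemma~\ref{lem:G-on-FS-proper} and Lemma~\ref{lem:properties_of_groups_actions}), and obtain the small $\delta$ by a compactness/contradiction argument in which a sequence of would-be counterexamples converges after passing to subsequences to a limiting element of $V_{c_0}$.

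The genuine difference is organizational. The paper sets $\delta = 1/n$ and runs the convergent-subsequence contradiction argument twice, once per conclusion, with the two arguments being almost word-for-word identical and with a single a priori bound $3\alpha$ serving both. You instead isolate a single \emph{key claim} --- whenever $k \in G$ carries one point of $U^\fol_{\alpha,\delta}(c_0)$ to another, $\fold{V_{c_0}}(e,k) < (\beta,\eta)$ --- prove it once by the limit argument, and then derive both~\ref{prop:constr-h:fol-to-fol} and~\ref{prop:constr-h:G} from it by routine manipulations with left $G$-invariance and the triangle inequalities for $\fold{\FS}$ and $\fold{V}$. To feed~\ref{prop:constr-h:fol-to-fol} into the key claim you must enlarge the foliated-time parameter from $\alpha$ to $2\alpha$, which is why you take $\beta$ as the larger of the constants for $\alpha$ and $2\alpha$; the paper instead folds the analogous time-expansion into the a priori constant $3\alpha$ chosen up front. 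Your factorization avoids the duplication at the price of some extra bookkeeping in the $\delta$-parameters ($\delta', \epsilon, \delta''$). Both are sound; yours is somewhat cleaner to read, while the paper's has the modest advantage that the two cases are already in the form needed for the Addendum~\ref{add:constr-h-non-periodic}, which piggy-backs directly on the $3\alpha$-bound $d_\FS(ac_0,c_0) \leq 3\alpha$ appearing inside both contradiction arguments.
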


\begin{proof}
  Let $\alpha > 0$ be given.  By Lemma~\ref{lem:d_FS-vs-d_G-on-compact} there is
  $\beta > 0$ such that for $g \in G$, $c \in \FS_0$ we have
  \begin{equation}
    d_\FS(gc,c) < 3\alpha \quad \implies \quad d_G(g,e) < \beta.
\label{prop:constr-h:estimate}
  \end{equation}    	
  Next let $\eta > 0$ and $c_0 \in \FS_0$ be given.  For $n \in \IN$ choose
  $h_n \colon G \cdot U_{\alpha,1/n}^{\fol}(c_0) \to G$ such that
  $c \in h_n(c) \cdot U_{\alpha,1/n}^\fol(c_0)$ for all
  $c \in G \cdot U_{\alpha,\delta_n}^{\fol}(c_0)$.  We will show that for all sufficiently
  large $n$ the map $h_n$ satisfies~\ref{prop:constr-h:fol-to-fol}
  and~\ref{prop:constr-h:G}.
  \\[1ex]~\ref{prop:constr-h:fol-to-fol}
  Assume there are
  infinitely many $n$ such that~\ref{prop:constr-h:fol-to-fol} fails.  Then there is
  $I \subseteq \IN$ infinite and for $n \in I$ there are $c_n,c'_n \in \FS$ with
  $c_n \in h_n(c_n) \cdot U_{\alpha,1/n}^\fol(c_0)$,
  $c'_n \in h_n(c'_n) \cdot U_{\alpha,1/n}^\fol(c_0)$,
  $\fold{\FS}(c_n,c_n') < (\alpha,1/n)$ such that
   \begin{equation*}
   	  d_{V_{c_0}-\fol}(h_n(c_n),h_n(c'_n)) < (\beta,\eta)
   \end{equation*} 
   fails.  Lemma~\ref{lem:sym_plus_triangle-fol} implies that we can arrange by possibly
   replacing $I$ by a smaller infinite subset that we have 
   \begin{equation*}
   	  \fold{\FS}(h_n(c_n)c_0,h_n(c'_n)c_0) < (3\alpha,1/n).
   \end{equation*}
   Then, with $a_n := h_n(c'_n)^{-1}h_n(c_n)$,
   \begin{equation*}
   	  \fold{\FS}(a_nc_0,c_0) < (3\alpha,1/n). 
   \end{equation*}
   We conclude from
   Lemma~\ref{lem:Basics_about_foliated_distance}~\ref{lem:Basics_about_foliated_distance:foliated_versus-distance}
   that $a_nc_0$ stays in some closed ball $K$ around $c_0$ with respect to $d_G$. Since $\FS$
   is a proper metric space, $K$ is compact. Since $\FS$ is a proper $G$-space by
   Lemma~\ref{lem:G-on-FS-proper}, we conclude from
   Lemma~\ref{lem:properties_of_groups_actions}~\ref{lem:properties_of_groups_actions:translating_compact_subgroups}
   that $\{g \in G \mid g \cdot K \cap K \not= \emptyset\}$ is a compact subset of $G$ and
   contains the sequence $(a_n)_{n \ge 0}$.  Hence we can arrange by passing to
   subsequences that $\lim_{n \to \infty} a_n = a$ holds in $G$ for some $a \in G$.
   
   We can choose $\tau_n \in [-3\alpha,3\alpha]$ for $n \ge 0$ satisfying
   $d_\FS(a_n\Phi_{\tau_n}(c_0),c_0) < 1/n$.  This implies
   $\lim_{n \to \infty} a_n\Phi_{\tau_n}(c_0) = c_0$.  By passing to subsequences again,
   we can arrange $\lim_{n \to \infty} \tau_n = \tau$ for some
   $\tau \in [-3\alpha,3\alpha]$.  We conclude
   $\lim_{n \to \infty} a_n \Phi_{\tau_n}(c_0) = a\Phi_{\tau}(c_0)$ from
   Lemma~\ref{lem:unif-cont-flow}~\ref{lem:unif-cont-flow:uniform}.  This shows
   $a\Phi_\tau(c_0) = c_0$ and hence $a \in V_{c_0}$.  As the flow is of at most unit
   speed, see Lemma~\ref{lem:unif-cont-flow}~\ref{lem:unif-cont-flow:estimate}, and
   $d_{\FS}$ is left $G$-invariant, we get
   \[
     d_\FS(ac_0,c_0) = d_\FS(ac_0,a\Phi_\tau(c_0)) = d_\FS(c_0,\Phi_\tau(c_0)) \leq |\tau|
     \leq 3\alpha.
   \]
   Hence $d_G(a,e) \leq \beta$ by~\ref{prop:constr-h:estimate}.  Since
   $\lim_{n \to \infty} a_n = a$ and $a \in V_{c_0}$ hold, there exists a natural number
   $N$ such that $d_{V_{c_0}-\fol}(a_n,e) < (\beta,\eta)$ holds for $n \in I$ with
   $n \ge N$. Since $D_G$ and hence $d_{V_{c_0}-\fol}$ are left $G$-invariant, we get
   $d_{V_{c_0}-\fol}(h_n(c_n),h_n(c'_n)) < (\beta,\eta)$ for $n \in I$ with $n \ge N$, a
   contradiction.  \\[1ex]~\ref{prop:constr-h:G} Assume there are infinitely many $n$ such
   that~\ref{prop:constr-h:G} fails.  Then there is $I \subseteq \IN$ infinite and for
   $n \in I$ there are $c_n \in \FS$, $g_n \in G$ with
   $c_n \in h_n(c_n) \cdot U_{\alpha,1/n}^\fol(c_0)$,
   $g_nc_n \in h_n(g_nc_n) \cdot U_{\alpha,1/n}^\fol(c_0)$, such that
   \begin{equation*}
     d_{V_{c_0}-\fol}(h_n(g_nc_n),g_nh_n(c_n)) < (\beta,\eta)
   \end{equation*} 
   fails. Recall that $\fold{\FS}$ is left $G$-invariant, see
   Lemma~\ref{lem:Basics_about_foliated_distance}~\ref{Basics_about_foliated_distance:left_invariance}.
   Hence we get from $c_n \in h_n(c_n) \cdot U_{\alpha,1/n}^\fol(c_0)$ and
   $g_nc_n \in h_n(g_nc_n) \cdot U_{\alpha,1/n}^\fol(c_0)$ that
   $d_{\FS}(c_n,h_n(c_n)c_0) \le (\alpha,1/n)$ and
   $d_{\FS}(c_n,g_n^{-1}h_n(g_nc_n)c_0) \le (\alpha,1/n)$ holds for $n \in I$. Put
   $a_n := h_n(c_n)^{-1}g_n^{-1}h_n(g_nc_n)$.  We conclude from
   Lemma~\ref{lem:sym_plus_triangle-fol} that we can arrange by replacing $I$ by a
   possibly smaller infinite subset that
   \[
     \fold{\FS}(a_nc_0,c_0) < (2\alpha,1/n)
   \]
   holds for $n \in I$.  We conclude from
   Lemma~\ref{lem:unif-cont-flow}~\ref{lem:unif-cont-flow:estimate}, that $a_nc_0$ stays
   in the closed ball $K$ of radius $2\alpha +1$ around $c$.  Since $\FS$ is a proper
   metric space, $K$ is compact. Since $\FS$ is a proper $G$-space by
   Lemma~\ref{lem:G-on-FS-proper}, we conclude from
   Lemma~\ref{lem:properties_of_groups_actions}~\ref{lem:properties_of_groups_actions:translating_compact_subgroups}
   that $g \in G \mid g \cdot K \cap K \not= \emptyset\}$ is a compact subset of $G$ and
   contains the sequence $(a_n)_{n \ge 0}$.  Hence we can arrange by passing to
   subsequences that $\lim_{n \to \infty} a_n = a$ holds in $G$ for some $a \in G$.  There
   are $\tau_n \in [-2\alpha,2\alpha]$ with $d_\FS(a_n\Phi_{\tau_n}(c_0),c_0) < 1/n$.
   This implies $\lim_{n \to \infty} a_n\Phi_{\tau_n}(c_0) = c_0$.  We can arrange by
   passing to subsequences $\lim_{n \to \infty} \tau_n = \tau$ for some
   $\tau \in [-2\alpha,2\alpha]$.  We conclude
   $\lim_{n \to \infty} a_n \Phi_{\tau_n}(c_0) = a\Phi_{\tau}(c_0)$ from
   Lemma~\ref{lem:unif-cont-flow}~\ref{lem:unif-cont-flow:uniform}.  This shows
   $a\Phi_\tau(c_0) = c_0$ and hence $a \in V_{c_0}$.  As the flow is of at most unit
   speed, see Lemma~\ref{lem:unif-cont-flow}~\ref{lem:unif-cont-flow:estimate}, and
   $d_{\FS}$ is left $G$-invariant, we get
   \[
     d_\FS(ac_0,c_0) = d_\FS(ac_0,a\Phi_\tau(c_0)) = d_\FS(c_0,\Phi_\tau(c_0)) \leq |\tau|
     \leq 2\alpha.
   \]
   Hence $d_G(a,e) \leq \beta$ by~\ref{prop:constr-h:estimate}. Since
   $\lim_{n \to \infty} a_n = a$ and $a \in V_{c_0}$ hold, there is a natural number $N$
   such that $d_{V_{c_0}-\fol}(a_n,e) < (\beta,\eta)$ holds for $n \in I$ with $n \ge
   N$. As $d_G$ and hence $d_{V_{c_0}-\fol}$ are left $G$-invariant, we get
   $d_{V_{c_0}-\fol}(h_n(g_nc_n),g_nh_n(c_n)) < (\beta,\eta) $ for $n \in I$ with
   $n \ge N$, a contradiction.
 \end{proof}

 The following addendum to Proposition~\ref{prop:constr-h} strengthens the conclusion in
 the case where the period $\tau_{c_0}$ of $c_0$ defined in~\ref{tau_c} is large relative
 to the given $\alpha$. Recall that we have defined the compact subgroup
 $K_{c_0} \subset G$ to be isotropy group $G_{c_0}$ of $c_0 \in \FS$ in~\ref{K_c}.  The
 difference to Proposition~\ref{prop:constr-h} is that in the two conclusions
 $d_{K_{c_0}-\fol}$ is used, not $d_{V_{c_0}-\fol}$.

\begin{addendum}\label{add:constr-h-non-periodic} Let $\FS_0 \subseteq \FS$ be compact.
  For all $\alpha > 0$ there are $\beta > 0$, $\ell > 0$ such that the following is true.
  For all $\eta > 0$, $c_0 \in \FS_0$ with $\tau_{c_0} > \ell$ there are $\delta > 0$,
  $h \colon G \cdot U^{\fol}_{\alpha,\delta}(c_0) \to G$ satisfying the following.
  \begin{enumerate}
  \item\label{add:constr-h-non-per:fol-to-fol} for
    $c,c' \in G \cdot U^{\fol}_{\alpha,\delta}(c_0)$ we have
    \begin{equation*}
      \fold{\FS}(c,c') < (\alpha,\delta) \quad \implies \quad d_{K_{c_0}-\fol}(h(c),h(c')) < (\beta,\eta);
    \end{equation*}	
  \item\label{add:constr-h-non-per:G} for $g \in G$,
    $c \in G \cdot U^{\fol}_{\alpha,\delta}(c_0)$ we have
    \begin{equation*}
      d_{K_{c_0}-\fol}(h(gc),gh(c))< (\beta,\eta).
    \end{equation*}
  \end{enumerate}
\end{addendum}

\begin{proof}
  We can argue almost exactly as in the proof of Proposition~\ref{prop:constr-h}.  For the
  element $a \in V_{c_0}$ produced in the proof of both conclusions there we also proved
  $d_\FS(ac_0,c_0) \leq 3\alpha$.  Thus if $\tau_{c_0} > \ell := 3\alpha$, then we must
  have $ac_0=c_0$, i.e., $a \in K_{c_0}$ and the conclusions follow for $d_{K_{c_0}-\fol}$
  in place of $d_{V_{c_0}-\fol}$.
\end{proof}

\begin{remark}[The role of $\FS_0$]\label{rem:The_role_of_FS_0}
  The construction of maps $h \colon U \to G$ will depend on a choice of base point for
  the orbit $Gc_0$, namely $c_0$.  The same construction with respect to a different base
  point $g_0c_0$ would also work, but with respect to a different collection of subgroups
  and constant $\beta$.  But the subgroups and constant $\beta$ in
  Proposition~\ref{prop:local} are required to be uniform over all orbits.  Therefore the
  base points for different orbits have to be chosen somewhat consistently; in our
  argument we have done this by using only base points from a fixed compact subset $\FS_0$
  of $\FS$.
\end{remark}


\subsection{Proof of Proposition~\ref{prop:local}}

\begin{lemma}\label{lem:isotropy-in-K}
  Let $c_0 \in \FS$.  Then there exists an open neighborhood $U$ of $c_0$ in $\FS$ and a
  compact open subgroup $K$ of $G$ such that $K_c \subseteq K$ for all $c \in U$.
\end{lemma}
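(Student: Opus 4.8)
The plan is to take for $K$ the isotropy group $G_{c_0(0)}$ of the point $c_0(0) \in X$ under the $G$-action on $X$. Since this action is smooth and proper, $G_{c_0(0)}$ is a \emph{compact open} subgroup of $G$. Moreover, if $g \in G$ fixes the generalized geodesic $c_0$, then in particular $gc_0(0) = c_0(0)$, so the algebraic inclusion $K_{c_0} = \bigcap_{t \in \IR} G_{c_0(t)} \subseteq G_{c_0(0)} = K$ holds for free. It therefore only remains to produce an open neighbourhood $U$ of $c_0$ in $\FS$ with $K_c \subseteq K$ for all $c \in U$.

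To obtain $U$ I would argue by contradiction. Suppose no such neighbourhood exists. Since $\FS$ is a metric space, we then find a sequence $(c_n)_{n \ge 0}$ in $\FS$ with $c_n \to c_0$ together with elements $g_n \in K_{c_n}$ satisfying $g_n \notin K$. Fix a closed ball $\overline{B}$ around $c_0$ in $\FS$; it is compact because $\FS$ is a proper metric space by Lemma~\ref{lem:FS-is-proper}. For all large $n$ we have $c_n \in \overline{B}$, and then $g_n c_n = c_n$ shows $g_n\overline{B} \cap \overline{B} \neq \emptyset$. As $G$ acts properly on $\FS$ by Lemma~\ref{lem:G-on-FS-proper}, the set $\{ g \in G \mid g\overline{B} \cap \overline{B} \neq \emptyset \}$ is a compact subset of $G$ (this is the usual consequence of properness, cf.\ the use of Lemma~\ref{lem:properties_of_groups_actions}~\ref{lem:properties_of_groups_actions:translating_compact_subgroups} in the proof of Proposition~\ref{prop:constr-h}). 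Hence, after passing to a subsequence, we may assume $g_n \to g$ for some $g \in G$.

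Since the $G$-action on $\FS$ is continuous, we get $g c_0 = \lim_{n \to \infty} g_n c_n = \lim_{n \to \infty} c_n = c_0$, so $g \in K_{c_0} \subseteq K$. But $K$ is open, hence a neighbourhood of $g$, so $g_n \in K$ for all sufficiently large $n$, contradicting $g_n \notin K$. This contradiction yields the desired $U$, and with $K = G_{c_0(0)}$ this finishes the proof.

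The argument is essentially the standard upper semicontinuity of isotropy groups for a proper smooth action, and I do not expect any real obstacle. The one point that requires care — and the reason I pass to $G_{c_0(0)}$ rather than working with $K_{c_0}$ directly — is that $K_{c_0}$ itself need not be open: it is the intersection $\bigcap_{t} G_{c_0(t)}$ of compact open subgroups, which can fail to be open when $c_0$ is a geodesic ray or line. Replacing it by the honestly compact open subgroup $G_{c_0(0)}$ (which still contains $K_{c_0}$ by restriction to a single parameter value) removes this difficulty while leaving the limiting argument intact.
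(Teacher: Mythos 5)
Your proof is correct, and it rests on the same key observation as the paper: take $K := G_{c_0(0)}$, which is compact open because the $G$-action on $X$ is smooth and proper, and which automatically contains $K_c$ for any $c$ with $c(0)$ close enough to $c_0(0)$. Where you diverge is in how you produce the neighbourhood $U$. The paper does it in one line: it applies Lemma~\ref{lem:properties_of_groups_actions}~\ref{lem:properties_of_groups_actions:isotropy_groups_in_a_neighborhood}~\ref{lem:properties_of_groups_actions:isotropy_groups_in_a_neighborhood:(3)} to the action on $X$ to get a neighbourhood $W$ of $c_0(0)$ in $X$ with $G_x \subseteq G_{c_0(0)}$ for $x \in W$, and then sets $U := \{c \in \FS \mid c(0) \in W\}$, open because evaluation at $0$ is continuous. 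You instead run a sequential compactness argument directly in $\FS$: assume no such $U$, extract $c_n \to c_0$ and $g_n \in K_{c_n}\setminus K$, use properness of $G \curvearrowright \FS$ (via $\FS$ being a proper metric space) to pass to a convergent subsequence $g_n \to g$, conclude $g \in K_{c_0} \subseteq K$, and contradict openness of $K$. This is essentially an inlined re-proof, on $\FS$ rather than on $X$, of the upper-semicontinuity statement that the paper's appendix lemma packages. Both arguments are sound; the paper's is shorter because the appendix lemma is already available and the evaluation map makes the reduction to $X$ immediate, while yours has the minor virtue of not needing the evaluation map at all.
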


\begin{proof}
  Recall that $G$ acts cocompactly, isometrically, properly, and smoothly on $X$.  There
  is an open neighborhood $W \subseteq X$ of $c_0(0)$ such that $G_x \subseteq G_{c_0(0)}$
  for all $x \in W$, see Lemma~\ref{lem:properties_of_groups_actions}~%
\ref{lem:properties_of_groups_actions:isotropy_groups_in_a_neighborhood}.  Now
  $K := G_{c_0(0)}$ and $U := \{ c \in \FS \mid c(0) \in W \}$ satisfy the assertion.
\end{proof}

The following proof of Proposition~\ref{prop:local} is the only place where we use
Assumption~\ref{assum:good-FS_0}.

\begin{proof}
  Let $\FS_0$ be the compact subset of $\FS$ from Assumption~\ref{assum:good-FS_0}.  Given
  $\alpha > 0$ Proposition~\ref{prop:constr-h} and
  Addendum~\ref{add:constr-h-non-periodic} provide us with numbers $\beta > 0$,
  $\ell > 0$.
  
  Next we use~\ref{assum:good-FS_0:V} and~\ref{lem:isotropy-in-K} to find
  $\calv \subseteq \CVCYC$ finite and a finite cover $\calw$ of $\FS_0$ such that for any
  $W \in \calw$ there are $K_W, V_W \in \CVCYC$ satisfying~\refstepcounter{theorem}
  \begin{enumerate}[label=(\thetheorem\alph*),leftmargin=*]
  \item\label{nl:calm-compact} for all $c \in W$ we have $K_c \subseteq K_W$;
  \item\label{nl:calm-periodic} for all $c \in W$ with $0 < \tau_c \leq \ell$ we have
    $V_c \subseteq V_W$.
  \end{enumerate}
  Let now $\eta > 0$ and $c_0 \in \FS$ be given.  According
  to~\ref{assum:good-FS_0:fund-domain} $\FS_0$ is a fundamental domain for the $G$ action.
  This allows us to choose $g_0 \in G$, $W \in \calw$ such that
  $g_0c_0 \in W \subseteq \FS_0$.  If $\tau_{c_0} = \tau_{g_0c_0} > \ell$, then we set
  $V := K_W$ and note that by~\ref{nl:calm-compact} $K_{g_0c_0} \subseteq V$.  If
  $\tau_{c_0} = \tau_{g_0c_0} \leq \ell$ then we set $V := V_W$ and note that
  by~\ref{nl:calm-periodic} $V_{g_0c_0} \subseteq V$.  Now Proposition~\ref{prop:constr-h}
  and Addendum~\ref{add:constr-h-non-periodic} give us $\delta > 0$ and
  $h \colon G \cdot U^\fol_{\alpha,\delta}(c_0) \to G$ satisfying for
  $c,c' \in G \cdot U^{\fol}_{\alpha,\delta}(c_0)$, $g \in G$
  \begin{eqnarray*}
    \fold{V}(h(c),h(c'))   & < & (\beta,\eta) \quad \text{provided} \; \fold{\FS}(c,c') < (\alpha,\delta);\\
    \fold{V}(h(gc),gh(c)) & < & (\beta,\eta).
  \end{eqnarray*}
  Of course $U := G \cdot U^\fol_{\alpha,\delta}(c_0)$ is $G$-invariant.
\end{proof}

This finishes the proof of our main Theorem~\ref{thm:X-to-J_intro}.


\typeout{------------------- Appendix -----------------}

\appendix


\typeout{--------- Section: The Bruhat-tits building for reductive $p$-adic groups  ----------}

\section{The Bruhat-tits building for reductive $p$-adic groups}\label{app:Bruhat-Tits}

Let $K$ be a non-Archimedian local field, i.e.,  a finite extension of the field of
$p$-adic numbers or the field of formal Laurent series $k((t))$ over a finite field $k$.
Consider an algebraic group $G$ over $K$ whose component of the identity is reductive.
Let $G(K)$ be its group of $K$-points.  We will simply say that $G(K)$ is a reductive
$p$-adic group.  We will need the action of $G(K)$ on the associated (extended) Bruhat-Tits building.
The original reference for the Bruhat-Tits building
is~\cite{Bruhat-Tits(1972),Bruhat-Tits(1984)}.  Summaries of the construction can be found
in~\cite{Tits(1979)} and in~\cite[Sec.~I.1]{Schneider-Stuhler(1997)}.

To set up notation we briefly review aspects of the construction.  Let $A$ be the real
affine space constructed in~\cite[1.2, p.31,32]{Tits(1979)}.  It comes equipped with an
action of a subgroup $N(K)$ of $G(K)$.  The affine space is finite dimensional  and the
action of $N(K)$ is cocompact\footnote{This is not explicitly mentioned~\cite[1.2,
  p.31,32]{Tits(1979)} but follows from the construction.}.  There is also a collection
$\Phi_{\text{af}}$ of affine linear function $\alpha \colon A \to \IR$, these are the
affine roots~\cite[1.6, p.33]{Tits(1979)}.  This set is symmetric, i.e., if
$\alpha \in \Phi_{\text{af}}$ then $-\alpha \in \Phi_{\text{af}}$.  After identifying $A$
with the associated linear space $V$ the affine roots can be described as follows: there
are finitely many linear function $a \colon V \to \IR$ (the roots) and for each $a$ there
is a discrete set $\Gamma_a \subseteq \IR$ such that the affine roots are the maps
$v \mapsto a(v) + l$ where $l \in \Gamma_a$.  (This follows from the discussion
in~\cite[1.6, p.33]{Tits(1979)}, see also~\cite[p.~103]{Schneider-Stuhler(1997)}).
Associated to $\alpha \in \Phi_{\text{af}}$ is the half-apartment
$A_\alpha = \{ x \in A \mid \alpha(x) \geq 0 \}$ and the wall
$\partial A_\alpha = \{ x \in A \mid \alpha(a)=0 \}$.  Chambers of $A$ are the connected
components of the complements of the walls~\footnote{In the quasi-simple case the facets
  are simplices; in the semi-simple case the facets are poly-simplices (i.e., finite
  products of simplices); in general the facets are products of affine spaces with
  poly-simplices~\cite[1.7, p.33]{Tits(1979)}.}.  The facets of the chambers are called
the facets of $A$.  The building $X$ is constructed as a quotient of
$G \times A$~\cite[2.1, p.43]{Tits(1979)}.  The quotient map $G(K) \times A \to X$ is
$G(K)$-equivariant and the $G(K)$-action on $X$ extends the $N(K)$-action on $A$.  The
translates of $A$ under $G(K)$ are the apartments of $X$.  As $N(K)$ acts cocompactly on
$A$ and since $X$ is the union of its apartments the action of $G(K)$ on $X$ is cocompact
as well.  The apartments $gA$ inherit an affine structure and a partition into facets from
$A$; these structures agree on intersections of apartments; any two points (in fact any
two facets) of $X$ are contained in a common apartment~\cite[2.2.1, p.44]{Tits(1979)}.
Given two apartments $A'$ and $A''$ there is $g \in G$ with $gA'=A''$ such that $g$ fixes
$A' \cap A''$ pointwise~\cite[2.2.1]{Tits(1979)}.  This can be used to construct a
$G(K)$-invariant $\CAT(0)$-metric $d_X$ on $X$~\cite[2.3,
p.45]{Tits(1979)}\footnote{In~\cite{Tits(1979)} the terminology of $\CAT(0)$-spaces is not
  used, but the inequality given there is equivalent to the $\CAT(0)$-condition,
  see~\cite[p.~163]{Bridson-Haefliger(1999)}.}.  Apartments are then flat subspace of $X$.
The action of $G(K)$ on $X$ is also proper~\cite[p.45]{Tits(1979)}.  By our assumption on
$K$ its residue field (denoted $\overline{K}$ in~\cite{Tits(1979)}) is finite.  This
assumption is used in some of the following results from~\cite{Tits(1979)}.  The
stabilizer groups of chambers (and therefore of facets) contain the Iwahori
subgroups~\cite[p.54]{Tits(1979)} and these subgroups are open~\cite[p.55]{Tits(1979)}.
In particular, all stabilizer groups for facets are open and the action of $G(K)$ on $X$
is smooth.  The chambers of $X$ can be subdivided to give $X$ the structure of a locally
finite simplicial complex where the action of $G(K)$ is simplicial~\cite[2.3.1,
p.45]{Tits(1979)}.  Altogether $X$ is a finite dimensional $\CAT(0)$-space with a a
proper, continuous, isometric, smooth, cocompact $G(K)$-action.  Assumption~\ref{assum:good-FS_0}
for the action of $G(K)$ on $X$ is verified in
Proposition~\ref{prop:p-adic-groups-have-good-FS_0} below.

\begin{lemma}\label{lem:geodesics-in-apartments} Any generalized geodesic
  $c \colon \IR \to X$ is contained in a translate of $A$.
\end{lemma}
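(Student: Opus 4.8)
The plan is to reduce to the case where the image of $c$ is an honest geodesic (a point, segment, ray, or line) and then to exploit three features of the building recalled above: any two points of $X$ lie in a common apartment, apartments are flat and hence convex in the $\CAT(0)$-space $X$, and $X$ is a locally finite (poly)simplicial complex on which $G(K)$ acts simplicially.

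First I would dispose of the constant tails. If $I$ is the interval on which $c$ is an isometric embedding, then $c$ is locally constant on each connected component of $\IR\setminus I$, and a locally constant map on a connected set is constant, with value fixed by continuity at the endpoints of $I$. Hence the image of $c$ equals $c(I)$: a single point if $I$ is degenerate, a geodesic segment if $I=[a,b]$, a geodesic ray if $I$ is a half-line, and a geodesic line if $I=\IR$. A single point lies in some apartment because $X$ is the union of its apartments and every apartment is a translate of $A$, so that case is immediate.

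For a segment with endpoints $p,q$: choose an apartment $A'=gA$ containing both (possible since any two points of $X$ lie in a common apartment). Since $A'$ is isometric to a Euclidean space and the inclusion $A'\hookrightarrow X$ is isometric, there is a geodesic from $p$ to $q$ running inside $A'$; this is also a geodesic of $X$, and as $X$ is $\CAT(0)$ geodesics are unique, so it must be the segment $[p,q]$. Thus $[p,q]\subseteq A'$; in particular apartments are convex, which is the form in which this is used.

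The remaining case — a ray or a line — is the heart of the matter, and here I would use local finiteness. Say the image is a ray $r$ with $r(0)=p$; the bi-infinite case is identical, exhausting $\IR$ from both sides. For each $n$ the segment $r([0,n])$ lies, by the segment case, in an apartment $A_n$, and $p\in A_n$. Because $X$ is a locally finite complex, $\overline{B}(p,R)$ lies in a finite subcomplex, so only finitely many closed chambers meet it; an apartment is a subcomplex determined by which chambers it contains, so there are only finitely many possible traces $A''\cap\overline{B}(p,R)$ of an apartment $A''$ through $p$. A diagonal argument over $R=1,2,\dots$ then yields a subsequence of the $A_n$ whose traces stabilise on every ball, and the resulting stable subcomplex is an apartment $A_\infty$ through $p$. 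Since $r([0,m])\subseteq\overline{B}(p,m)$ and lies in $A_n$ as soon as $n\ge m$, we get $r([0,m])\subseteq A_\infty$ for every $m$, hence $r\subseteq A_\infty$, which is a translate of $A$. The main obstacle is precisely this last step: checking that a subsequential limit of apartments through a fixed point is again an apartment — equivalently, that the space of apartments through a fixed point is compact. This is exactly where local finiteness of $X$ is essential, and it can either be verified by hand from the combinatorics of $X$ or invoked as a standard fact about locally finite buildings.
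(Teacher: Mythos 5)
The skeleton of your argument is the same as the paper's: dispose of the constant tails, use the $\CAT(0)$-convexity of apartments to see that $c([-n,n])$ lies in an apartment $A_n$ containing $c(0)$, and then try to pass to a limit. Where the two proofs diverge is in how the limit is taken, and that is precisely where your proposal leaves a genuine gap.

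You take the limit purely on the level of subcomplexes: finitely many chambers meet $\overline{B}(c(0),R)$, so finitely many traces, Cantor diagonal, stable subcomplex $A_\infty$. You then flag the real issue yourself --- ``checking that a subsequential limit of apartments through a fixed point is again an apartment'' --- and propose to resolve it by ``invoking a standard fact about locally finite buildings.'' This is not a proof, and it is more delicate than it looks here: the $A_n$ are $G(K)$-translates of the fixed \emph{extended} apartment $A$, not arbitrary combinatorial apartments of the subdivided simplicial complex, so even if the stable subcomplex were isomorphic to the relevant Coxeter complex it would still remain to show it is of the form $gA$. Two apartments can agree on an arbitrarily large ball and differ outside it, so the nested stable traces $T_R$ by themselves do not hand you a single $g$.

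The paper closes exactly this gap by shifting from apartments to group elements. By~\cite[2.2.1]{Tits(1979)} one can choose $g_n$ with $g_nA = A_n$ and $g_n$ fixing $A\cap A_n$ \emph{pointwise}; in particular $g_n$ fixes $c(0)$, so $(g_n)$ lives in the compact group $G(K)_{c(0)}$ and has an accumulation point $g$. Because the action is smooth, pointwise stabilizers of compact sets are open, so $g^{-1}g_n$ eventually fixes $A\cap \overline{B}(c(0),R)$ pointwise for any fixed $R$; hence $g_nA$ and $gA$ agree on ever larger balls, and since $c([-n,n])\subseteq g_nA$ one concludes $c(\IR)\subseteq gA$. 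This is essentially the compactness statement you were reaching for, but it is cleanly justified --- and notice that the input is compactness of point stabilizers plus openness of stabilizers (smoothness), not local finiteness per se. If you want to rescue your version, that group-theoretic argument is the ``verification by hand'' you would have to supply; as written, your proof reduces the lemma to an unproven claim.

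Two smaller remarks. First, your segment case is fine but you should phrase it as: any two points lie in a common apartment $A'$, $A'$ is flat and isometrically embedded, so the Euclidean geodesic in $A'$ is a geodesic in $X$, and uniqueness of geodesics in a $\CAT(0)$-space forces $[p,q]\subseteq A'$; this is what the paper refers to as ``by the construction of the metric.'' Second, in the extended building the chambers are products of poly-simplices with an affine space, so ``finitely many closed chambers meet a ball'' is only true after the subdivision the paper mentions; your use of local finiteness is implicitly about the subdivided complex, which slightly undercuts the phrase ``an apartment is a subcomplex determined by which chambers it contains.''
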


\begin{proof}
  We may assume $c(0) \in A$.  For $n \in \IN$ we find an apartment $A'$ that contains
  $c(\pm n)$.  By the construction of the metric $A'$ will then contain $c([-n,n])$.  Now
  choose $g_n \in G$ such that $g_n A = A'$ and $g_n$ fixes $A \cap A'$.  Then
  $(g_n)_{n \in \IN}$ is a sequence in the compact subgroup of $G(K)$ that fixes $c(0)$
  and has an accumulation point $g$.  As the action of $G(K)$ on $X$ is continuous the
  $g_nA$ must agree with $gA$ on larger and larger neighborhoods of $c(0)$.  It follows
  that the apartment $gA$ contains the image of $c$.
\end{proof}

In the following we write $\FS_\infty$ for the subspace of $\FS$ consisting of all
(bi-infinite) geodesics $c \colon \IR \to X$ and for $Y \subseteq X$ we set
$\FS_\infty(Y) := \FS(Y) \cap \FS_\infty$.

\begin{lemma}\label{lem:distance-to-half-space} Let $c_0 \in \FS_\infty(A)$.  Then there
  is $\epsilon > 0$ such that for all $\alpha \in \Phi_{\text{af}}$, with
  $c_0 \not\in \FS_\infty(A_{\alpha})$ we have
  \begin{equation*}
    d_\FS \big(c_0, \FS_\infty(A_{\alpha}) \big) \geq \epsilon.
  \end{equation*}
\end{lemma}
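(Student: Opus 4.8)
The plan is to pass to the flat geometry of the apartment and to split the affine roots into two types according to whether the direction of $c_0$ is transversal to the wall $\partial A_\alpha$. First I would fix notation: since apartments are flat subspaces of $X$, the restriction of $d_X$ to $A$ is a Euclidean metric, so $c_0$ is an affinely parametrized line $c_0(t)=p+tv$ with $p:=c_0(0)$ and $|v|=1$. Each $\alpha\in\Phi_{\text{af}}$ has a linear part $a_\alpha$, and by the structure recalled above these $a_\alpha$ range over the \emph{finite} set of roots; in particular $\alpha$ is Lipschitz with constant $\|a_\alpha\|$, and the distance from $p$ to $\partial A_\alpha$ equals $|\alpha(p)|/\|a_\alpha\|$. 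Put $\Lambda:=\max_\alpha\|a_\alpha\|<\infty$ and $\mu:=\min\{\,|a_\alpha(v)|\mid a_\alpha(v)\neq 0\,\}>0$ (with $\mu:=+\infty$ if that set is empty, in which case the first case below is vacuous).

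Next I would record one elementary inequality: for all $d,w\in V$ one has $\int_\IR |d+tw|\,\frac{dt}{2e^{|t|}}\ge|w|$. Indeed, projecting $d+tw$ orthogonally onto the line $\IR w$ (when $w\neq 0$) gives $|d+tw|\ge|w|\,|s+t|$ for a suitable $s\in\IR$, and $\int_\IR |s+t|\,\frac{dt}{2e^{|t|}}\ge\int_\IR |t|\,\frac{dt}{2e^{|t|}}=1$ since this integral is a convex even function of $s$. I would also use that only finitely many walls of $A$ lie within bounded distance of $p$ (local finiteness of the wall arrangement): hence only finitely many $\alpha$ satisfy $|\alpha(p)|/\|a_\alpha\|<1$, and I set $\epsilon_1>0$ to be the minimum of $1$ together with the finitely many strictly positive numbers $|\alpha(p)|/\|a_\alpha\|$ taken over those exceptional $\alpha$ with $a_\alpha(v)=0$ and $\alpha(p)<0$. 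The claim is then that $\epsilon:=\min(\epsilon_1,\mu/\Lambda)>0$ does the job.

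To verify this, fix $\alpha$ with $c_0\notin\FS_\infty(A_\alpha)$ and an arbitrary $c'\in\FS_\infty(A_\alpha)$, say $c'(t)=q+tv'$ with $|v'|=1$; since $c'$ lies in $A_\alpha$ we have $\alpha(c'(t))\ge 0$ for all $t$. If $a_\alpha(v)\neq 0$, the affine function $t\mapsto\alpha(c'(t))$ is bounded below, hence constant, so $a_\alpha(v')=0$ and therefore $|v-v'|\ge|a_\alpha(v-v')|/\|a_\alpha\|=|a_\alpha(v)|/\|a_\alpha\|\ge\mu/\Lambda$; since $c_0(t)-c'(t)=(p-q)+t(v-v')$, the recorded inequality gives $d_\FS(c_0,c')\ge|v-v'|\ge\mu/\Lambda\ge\epsilon$. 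If $a_\alpha(v)=0$, then $t\mapsto\alpha(c_0(t))$ is constant equal to $\alpha(p)$, which is $<0$ because $c_0\notin\FS_\infty(A_\alpha)$; Lipschitzness of $\alpha$ together with $\alpha(c'(t))\ge 0$ yields $|c_0(t)-c'(t)|\ge(\alpha(c'(t))-\alpha(p))/\|a_\alpha\|\ge|\alpha(p)|/\|a_\alpha\|$ for every $t$, hence $d_\FS(c_0,c')\ge|\alpha(p)|/\|a_\alpha\|\cdot\int_\IR\frac{dt}{2e^{|t|}}=|\alpha(p)|/\|a_\alpha\|\ge\epsilon_1\ge\epsilon$ by the choice of $\epsilon_1$. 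Taking the infimum over $c'\in\FS_\infty(A_\alpha)$ and noting that $\epsilon$ is independent of $\alpha$ completes the argument.

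The main obstacle is uniformity over the infinitely many affine roots. The naive estimate, comparing $\alpha$ along $c_0$ and along $c'$, degenerates precisely for the roots whose wall meets $c_0$ far from $p$, namely the roots with $a_\alpha(v)\neq 0$. The way around this is the observation that for such a root every competitor $c'\in\FS_\infty(A_\alpha)$ is forced to run parallel to $\partial A_\alpha$, hence in a direction uniformly bounded away from $v$; this produces a lower bound on $d_\FS$ that is insensitive to where the wall crosses $c_0$. The remaining roots, with $a_\alpha(v)=0$, are controlled by local finiteness of the wall arrangement near $p$.
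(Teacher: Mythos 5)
Your proof is correct and follows essentially the same strategy as the paper: it splits the affine roots into those whose walls are transversal to $c_0$ and those whose walls are parallel to $c_0$, handling the first via an angle bound on directions (your $|v-v'|\ge\mu/\Lambda$, coming from the observation that any competitor geodesic in $A_\alpha$ must be parallel to $\partial A_\alpha$) and the second via discreteness of $\Gamma_a$ near $p$. The paper's proof is a terser sketch of this same two-case argument; your version supplies the explicit estimate $\int_\IR |d+tw|\,dt/(2e^{|t|})\ge|w|$ and the Lipschitz bound, which the paper leaves implicit.
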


\begin{proof}
  As there are only finitely many roots it suffices to consider the affine roots
  associated to a fixed root $a$.  The walls associated to these affine roots are then all
  parallel and the half-apartments $A_\alpha$ are linearly ordered by inclusion (because
  $\Gamma_a$ is discrete).  If $c_0$ is not parallel to these walls, then no
  half-apartment $A_\alpha$ contains $c_0$ (or any geodesic parallel to $c_0$) and $c_0$
  intersects all the walls $\partial A_\alpha$ in the same angle.  It is then not
  difficult to bound $d_\FS(c_0, \FS_\infty(A_{\alpha}))$ in terms of this angle.  If $c_0$
  is parallel to the walls $\partial A_\alpha$, then among the $A_\alpha$ not containing
  $c_0$ there is a maximal half-apartment $A_{\alpha_0}$ and we can use
  $\epsilon := d_\FS(c_0,\FS_\infty(A_{\alpha_0}))$.
\end{proof}

\begin{lemma}\label{lem:distance-to-apartements} Let $c_0 \in \FS_\infty(A)$.  Then there
  is $\epsilon > 0$ such that for all $g \in G$ we have
  \begin{equation*}
    d_\FS \big(c_0, \FS_\infty(A \cap gA) \big) \in \{0\} \cup (\epsilon,\infty).
  \end{equation*} 
\end{lemma}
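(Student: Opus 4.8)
The plan is to reduce the statement about arbitrary translates $gA$ to the finitely-many-types situation handled by Lemma~\ref{lem:distance-to-half-space}, using the basic structure theory of the building recalled in the appendix. The key observation is that for any apartment $gA$, the intersection $A \cap gA$ is an intersection of half-apartments of $A$: indeed $A \cap gA$ is a closed convex subset of $A$ that is a union of facets, and standard Bruhat--Tits theory (see~\cite[2.2.1, p.44]{Tits(1979)}, or~\cite[Sec.~I.1]{Schneider-Stuhler(1997)}) shows any such subset is of the form $\bigcap_{\alpha \in S} A_\alpha$ for some subset $S \subseteq \Phi_{\text{af}}$ of affine roots; here one may even take $S$ finite since only finitely many parallelism classes of walls occur and in each class the half-apartments are linearly ordered. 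First I would record this reduction carefully.

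Next I would analyze $\FS_\infty(A \cap gA)$. A bi-infinite geodesic lies in $A \cap gA = \bigcap_{\alpha \in S} A_\alpha$ iff it lies in every $A_\alpha$ with $\alpha \in S$, so $\FS_\infty(A \cap gA) = \bigcap_{\alpha \in S}\FS_\infty(A_\alpha)$. Now there are two cases for $c_0 \in \FS_\infty(A)$. If $c_0 \in \FS_\infty(A_\alpha)$ for every $\alpha \in S$, then $c_0 \in \FS_\infty(A \cap gA)$ and the distance is $0$. Otherwise there is some $\alpha \in S$ with $c_0 \notin \FS_\infty(A_\alpha)$; then $\FS_\infty(A \cap gA) \subseteq \FS_\infty(A_\alpha)$, so $d_\FS(c_0, \FS_\infty(A\cap gA)) \geq d_\FS(c_0, \FS_\infty(A_\alpha)) \geq \epsilon$, where $\epsilon > 0$ is the uniform constant supplied by Lemma~\ref{lem:distance-to-half-space} (which depends only on $c_0$, not on $\alpha$). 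Taking this same $\epsilon$ gives the dichotomy $d_\FS(c_0, \FS_\infty(A \cap gA)) \in \{0\} \cup [\epsilon,\infty)$, and shrinking $\epsilon$ slightly yields $\{0\} \cup (\epsilon,\infty)$ as stated.

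I expect the main obstacle to be the first step: justifying cleanly that $A \cap gA$ is an intersection of half-apartments $A_\alpha$ indexed by affine roots of $A$ (rather than, say, merely a convex union of facets cut out by arbitrary hyperplanes). This is a standard fact in the theory of apartments in buildings — it follows from the existence of $g' \in G$ with $g'(gA) = A$ fixing $A \cap gA$ pointwise~\cite[2.2.1]{Tits(1979)}, which forces $A \cap gA$ to be an intersection of root half-spaces because the walls of $A$ through which a retraction can fold are precisely the walls $\partial A_\alpha$ — but one should cite it precisely or give the short argument. Everything after that is a formal manipulation plus an appeal to Lemma~\ref{lem:distance-to-half-space}; in particular no new geometric estimate is needed, since the uniform lower bound $\epsilon$ is already uniform over all affine roots not containing $c_0$.
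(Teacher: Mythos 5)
Your proposal is correct and follows essentially the same route as the paper: both reduce to Lemma~\ref{lem:distance-to-half-space} by producing, whenever $c_0$ does not lie in $A \cap gA$, a single half-apartment $A_\alpha$ that contains $A \cap gA$ but not $c_0$, and then invoking the uniformity of $\epsilon$ over all affine roots. The only cosmetic difference is that you make the decomposition $A \cap gA = \bigcap_{\alpha \in S} A_\alpha$ explicit, whereas the paper derives the separating $A_\alpha$ directly from the weaker (and more immediate) fact that $A \cap gA$ is convex and a union of facets — so the precision you worry about in your last paragraph is not actually needed, but your version is correct all the same.
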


\begin{proof}
  The intersection $gA \cap A$ is a union of facets of $A$ and convex.  It follows that if
  $c_0$ is not contained in $gA \cap A$, then $gA \cap A$ is contained in a half-apartment
  $A_\alpha$ that does not contain $c_0$.  The assertion follows now from
  Lemma~\ref{lem:distance-to-half-space}.
\end{proof}

\begin{lemma}\label{lem:discrete-orbits-in-flats} Let $c_0 \in \FS_\infty(A)$.  Then
  $Gc_0 \cap \FS_\infty(A)$ is discrete.
\end{lemma}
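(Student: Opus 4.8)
The plan is a contradiction argument combining properness and smoothness of the $G$-action on $X$ with the fact that the apartment $A$ is a flat. So suppose $Gc_0 \cap \FS_\infty(A)$ is not discrete. Since $\FS$ is metrizable, there then exist a point $d \in Gc_0 \cap \FS_\infty(A)$ and pairwise distinct points $d_n \in Gc_0 \cap \FS_\infty(A)$, $n \in \IN$, with $d_n \neq d$ and $d_n \to d$ in $\FS$. As $d$ and all $d_n$ lie in the single orbit $Gc_0$, I would pick $h_n \in G$ with $d_n = h_n d$; recall that then $d_n(t) = h_n\bigl(d(t)\bigr)$ for all $t \in \IR$.

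First I would descend to $X$. By Lemma~\ref{lem:FS-is-proper} the evaluation maps $\FS \to X$ at the parameters $0$ and $1$ are continuous, so $h_n(d(0)) = d_n(0) \to d(0)$ and $h_n(d(1)) = d_n(1) \to d(1)$ in $X$. Fix a compact neighborhood $B$ of $d(0)$ (possible since $X$ is locally compact). For large $n$ we have $d_n(0) \in B$, hence $h_nB \cap B \neq \emptyset$; since the $G$-action on $X$ is proper, $\{g \in G \mid gB \cap B \neq \emptyset\}$ is compact, so $(h_n)$ lies eventually in a compact subset of $G$. After passing to a subsequence I may therefore assume $h_n \to h$ for some $h \in G$. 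Continuity of the action gives $h_n(d(0)) \to h(d(0))$, which together with $h_n(d(0)) \to d(0)$ forces $h(d(0)) = d(0)$; likewise $h(d(1)) = d(1)$.

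Now I would use smoothness: the isotropy groups $G_{d(0)}$ and $G_{d(1)}$ are compact open in $G$, so $G_{d(0)} \cap G_{d(1)}$ is an open neighborhood of $e$. Since $h^{-1}h_n \to e$, we get $h^{-1}h_n \in G_{d(0)} \cap G_{d(1)}$ for all large $n$, i.e.\ $h_n(d(0)) = h(d(0)) = d(0)$ and $h_n(d(1)) = h(d(1)) = d(1)$, so $d_n(0) = d(0)$ and $d_n(1) = d(1)$ for all large $n$. But $d$ and $d_n$ are bi-infinite unit-speed geodesics whose images both lie in the single apartment $A$, which is isometric to a Euclidean affine space; there a geodesic line is uniquely determined by any two of its points, and $d(0) \neq d(1)$ since $d$ has unit speed. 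Therefore $d_n = d$ for all large $n$, contradicting $d_n \neq d$, which would be the required contradiction.

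I expect the reduction to $X$ and the two passages to subsequences to be routine; the step that really uses the hypothesis $c_0 \in \FS_\infty(A)$ (rather than just $c_0 \in \FS_\infty$) is the last one, where unique extension of a geodesic is available only because $d$ and the $d_n$ share the common flat $A$ — in a general $\CAT(0)$ space a geodesic segment need not extend uniquely. As an alternative organization, since each $d_n$ has image in $A \cap h_nA$ one may use \cite[2.2.1]{Tits(1979)} to replace $h_n$ by an element $\phi_n$ of the stabilizer of $A$ with $\phi_n d = d_n$, thereby reducing the claim to discreteness of orbits for the smooth proper action of that stabilizer on the Euclidean space $A$; in this variant Lemmas~\ref{lem:distance-to-half-space} and~\ref{lem:distance-to-apartements} serve to make the choice of apartments uniform.
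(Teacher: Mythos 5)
Your proof is correct and follows essentially the same path as the paper's: both evaluate at two distinct parameter values to reduce to discreteness of $G$-orbits in $X$ (a consequence of the action on $X$ being smooth and proper), and then use that a geodesic line in the Euclidean flat $A$ is determined by two of its points. The only difference is cosmetic — the paper cites orbit-discreteness in $X$ directly, whereas you re-derive it from properness and open isotropy groups via a subsequence argument.
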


\begin{proof}
  Choose real numbers $t_- < t_+$.  As geodesics in $A$ have unique extensions in $A$ we
  observe the following: if $gc_0 \in \FS_\infty(A)$ and $c_0(t_\pm) = gc_0(t_\pm)$, then
  $c_0 = gc_0$.
    
  Let now $g_n \in G$ with $g_nc_0 \in \FS_\infty(A)$ and
  $g_nc_0 \to c_1 \in \FS_\infty(A)$ as $n \to \infty$.  As the action of $G(K)$ on $X$ is
  smooth all orbits for this action are discrete.  Thus $g_nc_0(t_\pm) = c_1(t_\pm)$ for
  almost all $n$.  The above observation now implies that $g_nc_0$ is eventually constant.
  Thus $Gc_0 \cap \FS(A)$ is discrete, as asserted.
\end{proof}

Recall that an element $c$ in $\FS$ is called periodic if there exists $g \in G$ and
$t \in \IR$ with $t > 0$, and $gc = \Phi_t(c)$.  If $c$ is periodic, then necessarily
$c \in \FS_\infty$.

\begin{lemma}\label{lem:translations-in-nbhd-redone} Let $c_0 \in \FS_\infty(A)$.  
Let $\beta > 0$.  Then there is $\epsilon > 0$ such that the
  following holds.  Let $c \in \FS_\infty(A)$ with $d_\FS(c,c_0) < \epsilon$, $g \in G$,
  $t \in [-\beta,\beta]$ with $gc=\Phi_t c$.  Then $g \in V_{c_0}$.
\end{lemma}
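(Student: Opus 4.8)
The plan is to argue by contradiction, combining the three preceding lemmas about bi‑infinite geodesics inside the apartment $A$. So suppose the assertion fails for some fixed $\beta>0$. Then there are $c_n\in\FS_\infty(A)$ with $c_n\to c_0$, together with $g_n\in G$ and $t_n\in[-\beta,\beta]$ satisfying $g_nc_n=\Phi_{t_n}(c_n)$ but $g_n\notin V_{c_0}$ for all $n$.

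First I would observe that $g_n$ preserves the image of $c_n$ as a set: since flowing the bi‑infinite geodesic $c_n$ only reparametrises it, we have $g_n\bigl(c_n(\IR)\bigr)=(g_nc_n)(\IR)=(\Phi_{t_n}c_n)(\IR)=c_n(\IR)$. As $c_n(\IR)\subseteq A$, this gives $c_n(\IR)=g_n\bigl(c_n(\IR)\bigr)\subseteq g_nA$, hence $c_n(\IR)\subseteq A\cap g_nA$ and $c_n\in\FS_\infty(A\cap g_nA)$. Let $\epsilon_0>0$ be the constant provided by Lemma~\ref{lem:distance-to-apartements} for $c_0$. Since $d_\FS\bigl(c_0,\FS_\infty(A\cap g_nA)\bigr)\le d_\FS(c_0,c_n)\to 0$, for $n$ large this distance is $<\epsilon_0$ and therefore equals $0$; as $A\cap g_nA$ is closed, taking a sequence in $\FS_\infty(A\cap g_nA)$ converging to $c_0$ forces $c_0\in\FS_\infty(A\cap g_nA)$. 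In particular $c_0(\IR)\subseteq g_nA$, so $g_n^{-1}c_0\in\FS_\infty(A)$, i.e.\ $g_n^{-1}c_0\in Gc_0\cap\FS_\infty(A)$ for all large $n$.

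Next I would record a foliated estimate placing $g_n^{-1}c_0$ near the flow line of $c_0$. Using that $d_\FS$ is left $G$‑invariant and that $d_\FS(\Phi_{t_n}c_n,\Phi_{t_n}c_0)\le e^{\beta}d_\FS(c_n,c_0)$ by Lemma~\ref{lem:unif-cont-flow}~\ref{lem:unif-cont-flow:estimate},
\[
 d_\FS(g_nc_0,\Phi_{t_n}c_0)\le d_\FS(g_nc_0,g_nc_n)+d_\FS(g_nc_n,\Phi_{t_n}c_0)= d_\FS(c_0,c_n)+d_\FS(\Phi_{t_n}c_n,\Phi_{t_n}c_0)\le(1+e^{\beta})\,d_\FS(c_0,c_n).
\]
Applying the isometry $g_n^{-1}$ and the identity $g_n^{-1}\Phi_{t_n}=\Phi_{t_n}g_n^{-1}$ yields $d_\FS\bigl(c_0,\Phi_{t_n}(g_n^{-1}c_0)\bigr)\le(1+e^{\beta})\,d_\FS(c_0,c_n)=:\delta_n$ with $\delta_n\to 0$.

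Finally I would run a compactness argument in the spirit of the proof of Lemma~\ref{lem:discrete-orbits-in-flats}. After passing to a subsequence we may assume $t_n\to t\in[-\beta,\beta]$. Since $d_\FS\bigl(\Phi_{t_n}(g_n^{-1}c_0),c_0\bigr)\le\delta_n\to 0$ we get $\Phi_{t_n}(g_n^{-1}c_0)\to c_0$, and then, by continuity of the flow (Lemma~\ref{lem:unif-cont-flow}), $g_n^{-1}c_0=\Phi_{-t_n}\bigl(\Phi_{t_n}(g_n^{-1}c_0)\bigr)\to\Phi_{-t}c_0\in\FS_\infty(A)$. Since $g_n^{-1}c_0\in Gc_0\cap\FS_\infty(A)$, arguing exactly as in the proof of Lemma~\ref{lem:discrete-orbits-in-flats} (geodesics in $A$ are determined by their values at two points, and orbits of the smooth action of $G$ on $X$ are discrete) shows that $g_n^{-1}c_0=\Phi_{-t}c_0$ for all large $n$. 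Hence $g_nc_0=\Phi_{t}c_0$, so $g_n\in V_{c_0}$ for large $n$, contradicting the choice of the $g_n$. The main obstacle here is the first observation — recognising that the relation $g_nc_n=\Phi_{t_n}(c_n)$ makes $g_n$ preserve the set $c_n(\IR)$, which is precisely what allows Lemma~\ref{lem:distance-to-apartements} to enter. A secondary point is that $t_n$ is only bounded, not small, so one never controls $d_\FS(g_nc_0,c_0)$ directly; the discreteness of $Gc_0\cap\FS_\infty(A)$ must therefore be combined with a limiting argument that absorbs the $[-\beta,\beta]$‑window of the flow.
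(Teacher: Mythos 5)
Your proof is correct, and it uses the same key lemmas as the paper: Lemma~\ref{lem:distance-to-apartements} to show $c_0$ lies in the relevant intersection of apartments for large $n$, and the eventual-constancy argument established in (the proof of) Lemma~\ref{lem:discrete-orbits-in-flats} to force $g_n^{-1}c_0$ to stabilize. The genuine difference lies in how you obtain convergence: the paper invokes properness of the $G$-action on $\FS$ (Lemma~\ref{lem:G-on-FS-proper}) to pass to a convergent subsequence $g_n\to g$, observes $gc_0=\Phi_t c_0$, and then argues $g_nc_0=gc_0$ eventually; you bypass properness entirely by the explicit metric estimate
\[
d_\FS\bigl(c_0,\Phi_{t_n}(g_n^{-1}c_0)\bigr)\le\bigl(1+e^{\beta}\bigr)\,d_\FS(c_0,c_n)\to 0,
\]
which, after passing to $t_n\to t$, directly gives $g_n^{-1}c_0\to\Phi_{-t}c_0$. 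The other difference is cosmetic: you argue with $g_n^{-1}$ and $A\cap g_nA$, the paper with $g_n$ and $A\cap g_n^{-1}A$; both are valid readings of the fact that $g_n$ stabilizes the set $c_n(\IR)$. Your route saves a dependency (no appeal to properness of the $\FS$-action) at the modest cost of writing out the quantitative control that compactness would otherwise supply; the paper's route is shorter to state but carries the extra hypothesis.

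Two small points worth making explicit if you write this up: you need that $\FS_\infty(A\cap g_nA)$ is closed in $\FS$ in order to pass from distance zero to membership (this follows because $A\cap g_nA$ is closed in $X$, pointwise evaluation is continuous, and $\FS_\infty$ is closed in $\FS$); and, as you already flag, what you actually need from Lemma~\ref{lem:discrete-orbits-in-flats} is the slightly stronger conclusion of its proof — that a sequence in $Gc_0\cap\FS_\infty(A)$ converging in $\FS_\infty(A)$ is eventually constant — not just that the set is discrete. The paper's own proof leans on the same refinement, so you are in good company.
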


\begin{proof}
  Assume this fails.  Then there are sequences $(c_n)_{n \ge 0}$ in $\FS_\infty(A)$,
  $(t_n)_{n \ge 0}$ in $[-\beta,\beta]$, and $(g_n)_{n \ge 0}$ in $G(K)$ such that
  $\lim_{n \to \infty} c_n = c_0$, $g_n c_n = \Phi_{t_n} c_n$, but $g_n \not\in V_{c_0}$.
  By passing to subsequences we can arrange $\lim_{n \to \infty} t_n = t$ for some
  $t \in [-\beta,\beta]$.  Then we get
  $\lim_{n \to \infty} g_n c_n = \lim_{n \to \infty} \Phi_{t_n}(c_n) = \Phi_{t} c_0$ from
  Lemma~\ref{lem:unif-cont-flow}~\ref{lem:unif-cont-flow:uniform}.  As
  $G \curvearrowright \FS$ is proper, see Lemma~\ref{lem:G-on-FS-proper}, the $g_n$ vary
  over a relatively compact set.  Thus we can pass to a further subsequence and assume
  that $\lim_{n \to \infty} g_n = g$ for some $g \in G$.  Then
  $\lim_{n \to \infty} g_nc_0 = gc_0$.  As $G \curvearrowright \FS$ is isometric we also
  have $\lim_{n \to \infty} g_nc_n = gc_0$.  Thus $gc_0 = \Phi_t(c_0)$.  We have
  $g_nc_n = \Phi_{t_n} c_n \in \FS(A)$.  Thus $c_n \in \FS_\infty((g_n)^{-1}A)$.
  Lemma~\ref{lem:distance-to-apartements} implies that $c_0 \in \FS_\infty((g_n)^{-1}A)$
  for almost all $n$.  Thus $g_nc_0 \in \FS(A)$ for almost all $n$.  Now
  Lemma~\ref{lem:discrete-orbits-in-flats} implies that $g_n c_0 = g c_0$ for almost all
  $n$.  Thus $g_n c_0 = g c_0 = \Phi_t(c_0)$ for almost all $n$, contradicting
  $g_n \not\in V_{c_0}$.
\end{proof}

\begin{lemma}\label{lem:isotropy-decreases-V}
  Let $c_0 \in \FS_\infty(A)$.  
  Let $\ell > 0$.  Then
  there is $\epsilon > 0$ such that for all $c \in \FS_\infty(A)$ with
  $d_\FS(c,c_0) < \epsilon$ and $0 < \tau_c < \ell$ we have $V_c \subseteq V_{c_0}$.
\end{lemma}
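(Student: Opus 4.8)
The plan is to deduce this from Lemma~\ref{lem:translations-in-nbhd-redone} together with the finite generation of $V_c$ for periodic $c$ recorded in Lemma~\ref{lem:about-tau_c}. The key point is that the bound $\tau_c < \ell$ forces $V_c$ to be generated by elements that move $c$ along the flow by a uniformly bounded amount, which is exactly the input that Lemma~\ref{lem:translations-in-nbhd-redone} can process.

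Concretely, I would first apply Lemma~\ref{lem:translations-in-nbhd-redone} to $c_0$ with $\beta := \ell$ to obtain $\epsilon > 0$ with the property: if $c \in \FS_\infty(A)$ satisfies $d_\FS(c,c_0) < \epsilon$, $g \in G$, $t \in [-\ell,\ell]$ and $gc = \Phi_t(c)$, then $g \in V_{c_0}$. Now let $c \in \FS_\infty(A)$ with $d_\FS(c,c_0) < \epsilon$ and $0 < \tau_c < \ell$. Since $\tau_c < \infty$, the geodesic $c$ is periodic, so Lemma~\ref{lem:about-tau_c} applies: it produces $v \in V_c$ with $vc = \Phi_{\tau_c}(c)$, and asserts that $v$ together with $K_c$ generates $V_c$.

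It then remains to check that both $v$ and $K_c$ land in $V_{c_0}$. For $v$ we have $\tau_c \in (0,\ell) \subseteq [-\ell,\ell]$ and $vc = \Phi_{\tau_c}(c)$, so the choice of $\epsilon$ gives $v \in V_{c_0}$. For $k \in K_c$ we have $kc = c = \Phi_0(c)$ and $0 \in [-\ell,\ell]$, so the same argument gives $k \in V_{c_0}$; hence $K_c \subseteq V_{c_0}$. Since $V_{c_0}$ is a subgroup of $G$ containing a generating set of $V_c$, we conclude $V_c \subseteq V_{c_0}$, as desired. There is no serious obstacle here beyond correctly invoking the two auxiliary lemmas; the one thing to be careful about is that the hypothesis $\tau_c < \ell$ must be used precisely to keep the translation parameters of a generating set of $V_c$ inside the fixed interval $[-\ell,\ell]$ for which Lemma~\ref{lem:translations-in-nbhd-redone} supplies a uniform $\epsilon$.
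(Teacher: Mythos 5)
Your proposal is correct and follows exactly the same route as the paper's own (terser) proof: invoke Lemma~\ref{lem:about-tau_c} to produce the generator $v$ of $V_c$ modulo $K_c$ with translation parameter $\tau_c < \ell$, and invoke Lemma~\ref{lem:translations-in-nbhd-redone} with $\beta = \ell$ to force both $v$ and all of $K_c$ into $V_{c_0}$. Your write-up merely makes explicit the check for $K_c$ (via $t=0$) and the appeal to the subgroup property of $V_{c_0}$, both of which the paper leaves to the reader.
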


Recall that have defined the group $K_c$ to be the $G(K)$-isotropy group of $c$
in~\ref{K_c}.

\begin{proof}[Proof of Lemma~\ref{lem:isotropy-decreases-V}]
  Let $c \in \FS(A)$ with $\tau_c > 0$, i.e., $c$ is periodic.
  Lemma~\ref{lem:about-tau_c} tells us that there is $v \in V_c$ such that
  $\Phi_{\tau_c}(c) = vc$ and that $v$ together with $K_c$ generates $V_c$.  The result
  follows therefore from Lemma~\ref{lem:translations-in-nbhd-redone}
\end{proof}

\begin{proposition}\label{prop:p-adic-groups-have-good-FS_0} The action of $G(K)$ on
  $\FS(X)$ satisfies Assumption~\ref{assum:good-FS_0}.
\end{proposition}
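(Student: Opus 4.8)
The strategy is to build the compact set $\FS_0$ inside the flow space of a single apartment, using that $N(K)$ acts cocompactly on $A$, and then to verify the local condition~\ref{assum:good-FS_0:V} by a dichotomy according to whether the chosen base geodesic $c_0$ is bi-infinite. The key enabling input for the second part is Lemma~\ref{lem:isotropy-decreases-V}, which handles exactly the geodesics lying in $A$.

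For~\ref{assum:good-FS_0:fund-domain} I would argue as follows. Since $A$ is a flat in $X$ it is isometric to a Euclidean space and hence a proper metric space, so by Lemma~\ref{lem:FS-is-proper} (applied to $A$) the flow space $\FS(A)$ — which we identify with the closed subspace $\{ c \in \FS \mid \im(c) \subseteq A\}$ of $\FS = \FS(X)$, an isometric identification as $A$ is convex in $X$ — is proper and the evaluation $\FS(A) \to A$, $c \mapsto c(0)$, is proper. As the $N(K)$-action on $A$ is cocompact, fix a compact $C \subseteq A$ with $N(K)\cdot C = A$; then $\FS_0 := \{ c \in \FS(A) \mid c(0) \in C \}$ is the preimage of $C$ under a proper map, hence compact. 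To see $G\cdot\FS_0 = \FS$: given $c \in \FS$, Lemma~\ref{lem:geodesics-in-apartments} provides $g \in G$ with $\im(g^{-1}c) \subseteq A$, i.e.\ $g^{-1}c \in \FS(A)$; since $N(K)$ stabilises $A$ and $N(K)\cdot C = A$, there is $n \in N(K)$ with $n^{-1}\bigl(g^{-1}c\bigr)(0) \in C$, so that $n^{-1}g^{-1}c \in \FS_0$ and $c \in G\cdot\FS_0$.

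For~\ref{assum:good-FS_0:V} I would first record two elementary observations: every $c \in \FS_0$ has image in $A$, so $\FS_0 \cap \FS_\infty = \FS_0 \cap \FS_\infty(A)$; and $\FS_\infty \subseteq \FS$ is closed, since a generalized geodesic $c$ lies in $\FS_\infty$ precisely when $d_X(c(s),c(t)) = |s-t|$ for all $s,t \in \IR$, a condition preserved under the convergence (uniform on compact subsets) that defines the topology of $\FS$. Now fix $\ell > 0$ and $c_0 \in \FS_0$. If $c_0 \notin \FS_\infty$, take $U := \FS_0 \setminus \FS_\infty$, which is open in $\FS_0$ and contains $c_0$; no $c \in U$ is periodic, so $\tau_c = \infty$ for all $c \in U$ and the implication in~\ref{assum:good-FS_0:V} is vacuous. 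If $c_0 \in \FS_\infty$, then $c_0 \in \FS_\infty(A)$, and Lemma~\ref{lem:isotropy-decreases-V}, applied with $\ell+1$ in place of $\ell$, produces $\epsilon > 0$ such that every $c \in \FS_\infty(A)$ with $d_\FS(c,c_0) < \epsilon$ and $0 < \tau_c < \ell+1$ satisfies $V_c \subseteq V_{c_0}$. Then $U := \{ c \in \FS_0 \mid d_\FS(c,c_0) < \epsilon\}$ works: if $c \in U$ has $\tau_c \le \ell$ then $c$ is periodic, hence $c \in \FS_\infty$ by Lemma~\ref{lem:about-tau_c} and therefore $c \in \FS_\infty(A)$, and $0 < \tau_c \le \ell < \ell+1$ (positivity of $\tau_c$ for periodic $c$ being part of Lemma~\ref{lem:about-tau_c}), so $V_c \subseteq V_{c_0}$.

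The only genuinely delicate point — and the reason for routing the construction through a single apartment and the $N(K)$-action rather than taking an arbitrary compact fundamental domain — is that Lemma~\ref{lem:isotropy-decreases-V} is available only when $c$ and $c_0$ both lie in the \emph{same} apartment flow space $\FS_\infty(A)$; arranging $\FS_0 \subseteq \FS(A)$ guarantees this automatically for the periodic geodesics that occur, and sidesteps any need to compare geodesics lying in different apartments (compare Remark~\ref{rem:The_role_of_FS_0}). Once this is set up, the rest is just unwinding the definitions of periodicity, $\tau_c$, $V_c$ and the topology on $\FS$.
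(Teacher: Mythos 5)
Your proposal is correct and follows essentially the same route as the paper's proof: both build $\FS_0$ inside $\FS(A)$ via cocompactness of the $N(K)$-action on $A$, use Lemma~\ref{lem:geodesics-in-apartments} to conclude $G\cdot\FS(A)=\FS$, and verify~\ref{assum:good-FS_0:V} by the dichotomy on $c_0 \in \FS_\infty$ with Lemma~\ref{lem:isotropy-decreases-V} handling the periodic case. Your version is somewhat more explicit (the concrete description of $\FS_0$ via the evaluation map, the verification that $\FS_\infty$ is closed, and the $\ell+1$ adjustment reconciling the strict inequality in Lemma~\ref{lem:isotropy-decreases-V} with the non-strict one in~\ref{assum:good-FS_0:V}), but the underlying argument is the same.
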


\begin{proof} As discussed above the action of the subgroup $N(K)$ on $A$ is cocompact.
  This implies that the action of $N(K)$ on $\FS(A)$ is cocompact as well, see
  Lemma~\ref{lem:G-on-FS-proper}.  Thus we find $\FS_0 \subseteq \FS(A)$ compact with
  $N(K) \cdot \FS_0 = \FS(A)$.  By Lemma~\ref{lem:geodesics-in-apartments} we have
  $G(K) \cdot \FS(A) = \FS$.  So $G \cdot \FS_0 = \FS$,
  i.e.,~\ref{assum:good-FS_0:fund-domain} is satisfied.  
  
  Towards~\ref{assum:good-FS_0:V}, we first observe that $\tau_c < \infty$ implies $c \in \FS_\infty$, see Lemma~\ref{lem:about-tau_c}.  
  Let now $c_0 \in \FS_0 \subseteq \FS(A)$ and $\ell > 0$ be given.
  We need to find an open neighborhood $U$ of $c_0$ in $\FS_0$ such that for all $c \in U$ with $\tau_c < \ell$ we have $V_c \subseteq V_{c_0}$.
  As $\FS_\infty \subseteq \FS$ is closed we can take $\FS_0 \setminus \FS_\infty$ if $c_0 \not\in \FS_\infty$.
  If $c_0 \in \FS_\infty$, then Lemma~\ref{lem:isotropy-decreases-V} provides a suitable $\epsilon$-neighborhood.
  Thus~\ref{assum:good-FS_0:V} is satisfied as well.
\end{proof}


\typeout{--------- Section: Basics about group actions  ----------}

\section{Basics about group actions}\label{app:basics}


A (continuous) map $f \colon X \to Y$ of (compactly generated topological) spaces is
called \emph{proper} if preimages of compact subsets are compact again. A $G$-space $X$ is
called \emph{proper} if the map
$\Theta^G_X \colon G \times X \to X \times X, \; (g,x) \mapsto (x,gx)$ is proper. It is
called \emph{smooth} if all isotropy group are open. It is called \emph{cocompact} if the
quotient space $X/G$ is compact.

\begin{lemma}\label{lem:properties_of_groups_actions}
  Let $G$ be a locally compact Hausdorff group and let $X$ be a $G$-space.

  \begin{enumerate}
  \item\label{lem:properties_of_groups_actions:proper_in_terms_of_neighborhoods} The
    $G$-space $X$ is proper if and only if for any $x \in X$ there is an open neighborhood
    $U$ such that the subset $\{g \in G \mid g \cdot U \cap U \neq \emptyset \}$ of $G$ is
    relatively compact, i.e., its closure in $G$ is compact;
  
  \item\label{lem:properties_of_groups_actions:proper_implies_compact_isotropy} The
    isotropy groups of a proper $G$-space are all compact.  A $G$-$CW$-complex is proper
    if and only if all its isotropy groups are compact;

  \item\label{lem:properties_of_groups_actions:translating_compact_subgroups} If the
    $G$-space $X$ is proper, then for every compact subset $K \subseteq X$ the subset
    $\{g \in G \mid gK \cap K \not= \emptyset\}$ of $G$ is compact. The converse is true
    if $X$ is locally compact;

  \item\label{lem:properties_of_groups_actions:isotropy_groups_in_a_neighborhood} Let $X$
    be a metric space with isometric proper smooth $G$-action. Then for any $x \in X$
    there exists $\epsilon > 0$ satisfying:

    \begin{enumerate}
    \item\label{lem:properties_of_groups_actions:isotropy_groups_in_a_neighborhood:(1)}
      $G_x =\{g \in G \mid g \cdot B_{\epsilon}(x) \cap B_{\epsilon}(x) \not=
      \emptyset\}$;
    \item\label{lem:properties_of_groups_actions:isotropy_groups_in_a_neighborhood:(2)}
      The map
      \[\alpha \colon G \times_{G_x} B_{\epsilon}(x) \xrightarrow{\cong} G \cdot
        B_{\epsilon}(x), \quad (g,y) \mapsto gy
      \]
      is a $G$-homeomorphism;
    \item\label{lem:properties_of_groups_actions:isotropy_groups_in_a_neighborhood:(3)} We
      have $G_y \subseteq G_x$ for every $y \in B_{\epsilon}(x)$.

    \end{enumerate}
  \item\label{lem:properties_of_groups_actions:proper_and_subspaces} Let $X$ be a proper
    $G$-space. Let $A \subseteq X$ be a closed subspace. Let
    $G_A = \{g \in G \mid gA = A\}$.  Then the $G_A$-space $A$ is proper;

  \item\label{lem:properties_of_groups_actions:proper_and_subspaces:passing_to_subgroups}
    Let $H$ be a (closed) subgroup of $G$. If the $G$-space $X$ is proper, then its
    restriction to an $H$-space is proper.

  \item\label{lem:properties_of_groups_actions:proper_and_subspaces:surjective_maps_and_proper}
    Let $f \colon X \to Y$ be a proper $G$-map. If $Y$ is proper, then $X$ is proper;

  \item\label{lem:properties_of_groups_actions:smooth_and_discrete_orbits} Let $X$ be a
    metric space on which $G$ acts isometrically and properly.  Then $X$ is smooth if and
    only if each orbit $Gx$ (equipped with the subspace topology from $X$) is discrete;

  \item\label{lem:properties_of_groups_actions:proper_and_compact_isotropy} Suppose that
    $X$ is a locally compact metric space on which $G$ acts isometrically.  Then $X$ is
    proper and smooth if and only if each orbit $Gx$ is discrete and each isotropy group
    $G_x$ is compact.

  \item\label{lem:properties_of_groups_actions:cocompact_and_compact_subset} If the
    $G$-space $X$ contains a compact subset $C$ with $G \cdot C = X$, then $X$ is
    cocompact.

    If the $G$-space $X$ is locally compact and cocompact, then there is a compact subset
    $C \subseteq X$ satisfying $G\cdot C = X$;

  \item\label{lem:properties_of_groups_actions:proper_and_subspaces:surjective_maps_and_cocompact}
    Let $f \colon X \to Y$ be a proper $G$-map. If $Y$ is locally compact and cocompact,
    then $X$ is cocompact.

  \end{enumerate}
\end{lemma}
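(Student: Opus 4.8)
The plan is to group the eleven assertions by type. Parts (i)--(iii) are the standard reformulations of properness of the action map $\Theta^G_X$ and will be used throughout; parts (v), (vi), (vii), (x), (xi) are formal manipulations of $\Theta^G_X$ and of images and preimages of compact sets; and parts (iv), (viii), (ix) are the ones that genuinely use the metric via a limiting argument. First I would record (i)--(iii). For (iii): if $X$ is proper then $\{g \mid gK \cap K \neq \emptyset\}$ is the image under the projection $G \times X \to G$ of the compact set $(\Theta^G_X)^{-1}(K \times K)$, hence compact (and closed); conversely any compact $L \subseteq X \times X$ lies in $K \times K$ for $K$ the compact union of the two coordinate projections of $L$, so $(\Theta^G_X)^{-1}(L)$ is a closed subset of the compact set $\{g \mid gK \cap K \neq \emptyset\} \times K$ and hence compact, which is properness of $\Theta^G_X$. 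Part (i) is the analogous local reformulation, and part (ii) for $G$-spaces is immediate from $(\Theta^G_X)^{-1}(x,x) = G_x \times \{x\}$, the $G$-$CW$ statement being the standard fact that a $G$-$CW$-complex with compact isotropy groups is proper (which is where the cell structure is used).

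Next I would dispatch the formal statements. For (vi) one restricts $\Theta^G_X$ to the closed subset $H \times X$; for (v) one restricts it to $G_A \times A$, using that $G_A = \{g \mid gA \subseteq A\} \cap \{g \mid g^{-1}A \subseteq A\}$ is closed because $A$ is; in both cases the preimage of a compact set under the restricted map is a closed subset of a compact set. For (vii), $f$ proper implies $\mathrm{id}_G \times f$ proper, and $(\Theta^G_X)^{-1}(K)$ is contained in the preimage under $\mathrm{id}_G \times f$ of $(\Theta^G_Y)^{-1}\bigl((f \times f)(K)\bigr)$, again a closed subset of a compact set. For (x), $X/G$ is the image of the compact $C$ under the open quotient map, so $X$ is cocompact; conversely, when $X$ is locally compact, finitely many relatively compact open sets whose images cover the compact $X/G$ assemble to the required $C$. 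Finally (xi) follows by applying (x) to $f^{-1}(C)$, using $G \cdot f^{-1}(C) = f^{-1}(G \cdot C) = X$ and properness of $f$.

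The substance is in (iv), (viii), (ix). For (iv) the key step is to produce $\epsilon > 0$ such that $gB_\epsilon(x) \cap B_\epsilon(x) \neq \emptyset$ implies $g \in G_x$: otherwise there are $g_n \notin G_x$ and $y_n$ with $d(x, y_n), d(x, g_n y_n) < 1/n$, so, the action being isometric, $d(x, g_n x) \to 0$; the set $\{x\} \cup \{g_n x \mid n \in \IN\}$ is compact, so by (iii) the $g_n$ eventually lie in a fixed compact subset of $G$, a convergent subnet has a limit $g$ with $gx = x$, and $G_x$ open (smoothness) forces $g_n \in G_x$ eventually, a contradiction. Conclusion (iv)(c) is then immediate, and (iv)(b) holds because $\alpha$ is injective by the same property and open, the latter since the action map $G \times B_\epsilon(x) \to G \cdot B_\epsilon(x)$ is open, factors through the open quotient $G \times_{G_x} B_\epsilon(x)$, and has image open in $X$. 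For (viii), smoothness together with the same limiting argument (now using $G_x$ open directly) shows $x$ is isolated in its orbit, and transitivity of the isometric $G$-action on $Gx$ upgrades this to a uniform separation, so $Gx$ is closed and discrete; conversely, if $Gx$ is discrete then $x$ is isolated in it, hence by homogeneity uniformly separated, so $G_x = \{g \mid d(x,gx) < \epsilon\}$ is open. For (ix), the forward direction combines (ii) and (viii), and for the converse one checks properness through the converse of (iii): for compact $K$ the set $S := \{g \mid gK \cap K \neq \emptyset\}$ is closed, and given a net $g_\lambda \in S$ with $g_\lambda k_\lambda = k'_\lambda$ and $k_\lambda, k'_\lambda \in K$, after passing to a subnet $k_\lambda \to k$, $k'_\lambda \to k'$, and $g_\lambda k \to k'$; as the orbits are closed discrete, $k' \in Gk$, so $g_\lambda k$ is eventually a fixed $g_0 k$, whence $g_\lambda$ is eventually in the compact set $g_0 G_k$ and $S$ is compact.

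The main obstacle is to set up (i)--(iii) with exactly the right hypotheses and to keep track of where local compactness of $X$ (and Hausdorffness of the spaces involved) really enters, since these equivalences are invoked in almost every other part. The one other point requiring genuine care is the verification in (iv) that the continuous equivariant bijection $\alpha$ is actually a homeomorphism, for which openness of the orbit map onto the open saturated neighborhood $G \cdot B_\epsilon(x)$ is the essential input; once (i)--(iii) and (iv) are in place, the remaining assertions are routine bookkeeping with $\Theta^G_X$ and with images and preimages of compact sets.
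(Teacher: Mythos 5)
Your proposal is correct and follows essentially the same architecture as the paper's proof: parts (i)--(iii) are reduced to standard facts about the action map $\Theta^G_X$, parts (v)--(vii) and (x)--(xi) are formal bookkeeping with images and preimages of compact sets, and the metric/limiting argument carries (iv), (viii), (ix). Two small technical variants are worth noting. For (iv)(b) you deduce that $\alpha$ is a homeomorphism from openness of the action map $G \times B_\epsilon(x) \to G\cdot B_\epsilon(x)$; the paper instead uses smoothness (discreteness of $G/G_x$) to choose a set-theoretic section $s\colon G/G_x \to G$ and exhibits $G\cdot B_\epsilon(x)$ as the disjoint union $\coprod_{gG_x} s(gG_x)\cdot B_\epsilon(x)$ of open sets, which makes the homeomorphism visible without invoking openness of the action map (your openness claim is correct, since $a(U\times V)=\bigcup_{g\in U} gV$ and each $gV$ is open in $X$, but it is an extra step the paper avoids). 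For (viii) you give a direct metric argument that $x$ is $\epsilon$-isolated in $Gx$ iff $G_x$ is open; the paper instead cites the general fact that for a proper $G$-space the canonical map $G/G_x \to Gx$ is a homeomorphism and reads off the equivalence from there. Both are fine; your route is slightly more self-contained, the paper's is shorter by reference. The rest matches the paper, including the use of (iii) inside (iv), the deduction of (ix) from (ii), (viii) and a compactness/net argument, and the derivation of (xi) from (x) via $f^{-1}(C)$.
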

\begin{proof}~\ref{lem:properties_of_groups_actions:proper_in_terms_of_neighborhoods}
  See~\cite[Proposition~3.21 in Chapter~I on page~28]{Dieck(1987)}.
  \\[1mm]~\ref{lem:properties_of_groups_actions:proper_implies_compact_isotropy} Obviously
  the isotropy groups of a proper $G$-space are all compact. The claim about
  $G$-$CW$-complexes is proved in~\cite[Theorem 1.23 on page~18]{Lueck(1989)}.
  \\[1mm]~\ref{lem:properties_of_groups_actions:translating_compact_subgroups} The set
  $\{g \in G \mid g \cdot K \cap K \not= \emptyset\}$ is the image of
  $(\Theta^G_X)^{-1}(K \times K)$ under the projection $G \times X \to G$. Hence it is
  compact if $X$ is proper.  The converse follows from
  assertion~\ref{lem:properties_of_groups_actions:proper_in_terms_of_neighborhoods}.
  \\[1mm]~\ref{lem:properties_of_groups_actions:isotropy_groups_in_a_neighborhood} Suppose
  assertion~\ref{lem:properties_of_groups_actions:isotropy_groups_in_a_neighborhood:(1)}
  is not true. Then there is an $x \in X$ such that for every $\epsilon > 0$ the isotropy
  group $G_x$ is not equal to
  $\{g \in G \mid g \cdot B_{\epsilon}(x) \cap B_{\epsilon}(x) \not= \emptyset$. Hence we
  can choose a sequence of elements $(x_n)_{n \ge 0}$ in $X$ and a sequence of elements
  $(g_n)_{n \ge 0}$ in $G$ such that $x_n$ and $g_nx_n$ belong to $B_{1/n}(x)$ and
  $g_n \notin G_x$ holds.  Because of
  assertion~\ref{lem:properties_of_groups_actions:proper_in_terms_of_neighborhoods} there
  is an open neighborhood $U$ such that the subset
  $\overline{\{g \in G \mid g \cdot U \cap U \neq \emptyset \}}$ of $G$ is compact.  By
  passing to subsequences we can arrange $x_n \in U$ for $n \ge 0$.  Then $g_n$ belongs to
  the compact subset $\overline{\{g \in G \mid g \cdot U \cap U \neq \emptyset \}}$ of
  $G$. By passing to subsequences, we can arrange that there is an element $g \in G$ with
  $\lim_{n \to \infty} g_n = g$.  Since $\lim_{n \to \infty} x_n = x$ and
  $\lim_{n \to \infty} g_nx_n = x$ holds and $G$ acts isometrically, we conclude
  $\lim_{n \to \infty} g_nx = x$. This implies $g \in G_x$ because of
  $\lim_{n \to \infty} g_n = g$.  Since $G_x$ is open in $G$ and
  $\lim_{n \to \infty} g_n = g$, we get for almost all $g_n$ that $g_n = g$ and hence
  $g_n \in G_x$, a contradiction.  This proves
  assertion~\ref{lem:properties_of_groups_actions:isotropy_groups_in_a_neighborhood:(1)}.

  Next we show
  assertion~\ref{lem:properties_of_groups_actions:isotropy_groups_in_a_neighborhood:(2)}
  for the $\epsilon$ for which
  assertion~\ref{lem:properties_of_groups_actions:isotropy_groups_in_a_neighborhood:(1)}
  is true. Let $s \colon G/G_x \to G$ be a map of sets such that its composition with the
  projection $p \colon G \to G/G_x$ the identity on $G/G_x$. Since $G/G_x$ is discrete, we
  obtain a homeomorphism
  \[\beta \colon G/G_x \times B_{\epsilon}(x) \xrightarrow{\cong} G \times_{G_x}
    B_{\epsilon}(x), \quad (gG_x, y) \mapsto (s(gG_x),y).
  \]
  Its inverse is given by $(g,y) \mapsto (gG_x, s(gG_x)^{-1}gy)$. The composite
  $\beta \circ \alpha$ is the map
  $G/G_x \times B_{\epsilon}(x) \to G \cdot B_{\epsilon}(x), (gG_x,y) \mapsto s(gG_x)
  \cdot y$ which is a homeomorphism since $G \cdot B_{\epsilon}(x)$ is the disjoint union
  of open subsets $\coprod_{gG_x \in G/G_x} s(gG_x) \cdot B_{\epsilon}(x)$ by
  assertion~\ref{lem:properties_of_groups_actions:isotropy_groups_in_a_neighborhood:(1)}.
  Hence
  assertion~\ref{lem:properties_of_groups_actions:isotropy_groups_in_a_neighborhood:(2)}
  is true.

  Assertion~\ref{lem:properties_of_groups_actions:isotropy_groups_in_a_neighborhood:(3)}
  follows directly from
  assertion~\ref{lem:properties_of_groups_actions:isotropy_groups_in_a_neighborhood:(1)}.
  \\[1mm]\ref{lem:properties_of_groups_actions:proper_and_subspaces} Let
  $f \colon X \to Y$ be a proper map and $B \subseteq Y$ closed. Consider any closed
  subspace $A \subseteq f^{-1}(B)$.  Then the induced map $f|_{A} \colon A \to B$ is
  obviously proper. Now the claim follows from the commutativity of the following diagram
  \[
    \xymatrix{G_B \times B \ar[r]^-{\Theta^{G_B}_A} \ar[d] & B \times B \ar[d]
      \\
      G \times X \ar[r]_-{\Theta^G_X} & X \times X}
  \]
  whose vertical arrows are the obvious inclusions.
  \\[1mm]~\ref{lem:properties_of_groups_actions:proper_and_subspaces:passing_to_subgroups}
  This follows from the fact that the restriction of a proper map to a closed subspace is
  again proper.
  \\[1mm]\ref{lem:properties_of_groups_actions:proper_and_subspaces:surjective_maps_and_proper}
  Suppose that $Y$ is proper. The following diagram
  \[
    \xymatrix{G \times X \ar[r]^-{\Theta^G_X} \ar[d]_{\id_G \times f} & X \times X
      \ar[d]^{f \times f}
      \\
      G \times Y \ar[r]_-{\Theta^G_Y} & Y \times Y}
  \]
  commutes. Since the lower horizontal arrow and the vertical arrows are proper, the upper
  arrow is proper, see~\cite[Lemma~1.16 on page~14]{Lueck(1989)}, in other words, $X$ is
  proper.  \\[1mm]~\ref{lem:properties_of_groups_actions:smooth_and_discrete_orbits} This
  follows from the fact that for a proper $G$-space the canonical map $G/G_x \to Gx$ is a
  homeomorphism for every $x \in X$, see~\cite[Proposition~3.19~(ii) in Chapter~I on
  page~28]{Dieck(1987)} or~\cite[Lemma~1.19~(iii) on page~16]{Lueck(1989)}.
  \\[1mm]~\ref{lem:properties_of_groups_actions:proper_and_compact_isotropy} Suppose that
  $X$ is proper and smooth.  Then each isotropy group is compact and each $G$-orbit is
  discrete by
  assertions~\ref{lem:properties_of_groups_actions:proper_implies_compact_isotropy}
  and~\ref{lem:properties_of_groups_actions:smooth_and_discrete_orbits}.

  Suppose that each each orbit $Gx$ is discrete and each isotropy group $G_x$ is compact.
  By assertion~\ref{lem:properties_of_groups_actions:smooth_and_discrete_orbits} it
  suffices to show that the $G$-action is proper.  Consider $x \in X$. Since $Gx$ is
  discret and $X$ locally compact, we can choose a compact neighborhood $U$ of $x$ in $X$
  satisfying $U \cap Gx = \{x\}$. It suffices to show that the subset
  $\{g \in G \mid g U \cap U \not= \emptyset\}$ of $G$ is relative compact because of
  assertion~\ref{lem:properties_of_groups_actions:proper_in_terms_of_neighborhoods}.  We
  can equip $G$ with a left invariant proper metric,
  see~\cite{Haagerup-Przybyszewska(2006)}. It suffices to show that any sequence
  $(g_n)_{n \in \IN}$ of element in $G$ satisfying $g_n\cdot U \cap U \not= \emptyset$ has
  a convergent subsequence. Choose for each $n \ge 0$ elements $u_n$ and $u_n'$ in $U$
  such that $g_n u_n = u_n'$ holds. Since $U$ is compact, we can arrange after passing to
  subsequences that $\lim_{n \to \infty} u_n = u$ and $\lim_{n \to \infty} u_n' = u'$
  holds for appropriate elements $u,u' \in U$.  Since $G$ acts by isometries on $X$, we
  get $\lim_{n \to \infty} g_n u = u'$. As $Gu$ is a discrete subspace of $X$, we can
  arrange after passing to subsequences that $g_nu = u'$ holds for $n \ge 0$.  Hence
  $g_0^{-1}g_n u = u$ for all $n \ge 0$. Since $G_u$ is compact, we can arrange after
  passing to subsequences that $g_0^{-1}g_n$ is a convergent sequence in $G$. This implies
  that $g_n$ is a convergent sequence in $G$.
  \\[1mm]~\ref{lem:properties_of_groups_actions:cocompact_and_compact_subset} Let
  $p \colon X \to X/G$ be the projection. If $C \subseteq X$ is a compact subset with
  $G \cdot C = X$, then $p(C) = X/G$ and hence $X/G$ is compact.

  Suppose that $X$ is locally compact and $X/G$ is compact.  We can find for every
  $x \in X$ an open neighborhood $U(x)$ such that $\overline{U(x)}$ is compact.  We obtain
  an open covering $\{p(U(x)) \mid x \in X\}$ of $X/G$.  Since $X/G$ is compact, we can
  find a finite subset $I \subseteq X$ with $\bigcup_{x \in I} p(U(x)) = X/G$.  Then
  $C = \bigcup_{x \in I} \overline{U(x)}$ is a compact subset of $X$ with $G \cdot C = X$.
  \\[1mm]~\ref{lem:properties_of_groups_actions:proper_and_subspaces:surjective_maps_and_cocompact}
  Since $Y$ is locally compact and cocompact, we can choose by
  assertion~\ref{lem:properties_of_groups_actions:cocompact_and_compact_subset} a compact
  subset $C \subseteq Y$ with $G \cdot C = Y$.  Since $f$ is proper, $D:=f^{-1}(C)$ is a
  compact subset of $X$. Since $G \cdot D = X$ holds, $X/G$ is compact by
  assertion~\ref{lem:properties_of_groups_actions:cocompact_and_compact_subset}, in other
  words, $X$ is cocompact. This finishes the proof of
  Lemma~\ref{lem:properties_of_groups_actions}.
\end{proof}

\begin{example}[Non-proper action]\label{rem:non-proper_action}
  The action of $\IZ$ on $S^1$ by rotating through an irrational angle is free, isometric,
  and smooth, but not proper. All $\IZ$-orbits are dense and not discrete. The canonical
  $G$-map map $G/G_x \to Gx$ is continuous and bijective, but not a homeomorphism.
\end{example}


\typeout{--------- Section: References  ----------}

\def\cprime{$'$} \def\polhk#1{\setbox0=\hbox{#1}{\ooalign{\hidewidth
  \lower1.5ex\hbox{`}\hidewidth\crcr\unhbox0}}}



\end{document}